\definecolor{darkblue}{rgb}{0,0,.5}
\definecolor{darkgreen}{rgb}{.2,0.5,.2}
\numberwithin{equation}{section}
\font\tencyr=wncyr10 
\def\rus{\tencyr\cyracc}
\newtheorem{thm}{Theorem}[section]
\newtheorem{lm}[thm]{Lemma}
\newtheorem{cl}[thm]{Corollary}
\newtheorem{prop}[thm]{Proposition}
\theoremstyle{remark}
\theoremstyle{definition}
\newtheorem{df}[thm]{Definition}
\newcommand {\be}{{\mathfrak b}}
\newcommand {\g}{{\mathfrak g}}
\newcommand {\h}{{\mathfrak h}}
\newcommand {\q}{{\mathfrak q}}
\newcommand {\rr}{{\mathfrak r}}
\newcommand {\te}{{\mathfrak t}}
\newcommand {\ut}{{\mathfrak u}}
\newcommand {\eus}{\EuScript}
\newcommand {\gJ}{{\eus J}}
\newcommand {\gS}{{\eus S}}
\newcommand {\gZ}{{\eus Z}}
\newcommand {\ap}{\alpha}
\newcommand {\vp}{\varphi}
\newcommand {\vth}{\vartheta}
\newcommand {\e}{\boldsymbol{e}}
\newcommand{\ard}{\rightsquigarrow}
\newcommand {\ov}{\overline}
\newcommand{\gt}{\mathfrak}
\newcommand{\GL}{{\rm GL}}
\newcommand{\Aut}{\mathsf{Aut}}
\newcommand{\Ann}{\mathrm{Ann}}
\newcommand{\ad}{\mathrm{ad}}
\newcommand{\id}{{\rm id}}
\newcommand{\ind}{{\rm ind\,}}
\newcommand{\codim}{\mathrm{codim\,}}
\newcommand{\rk}{\mathrm{rk\,}}
\newcommand{\Lie}{\mathsf{Lie\,}}
\newcommand{\oth}{\mathsf{ord}(\vartheta)}
\newcommand{\Ord}{\mathsf{ord}(\sigma)}
\newcommand {\ca}{{\mathcal A}}
\newcommand {\cz}{{\mathcal Z}}
\newcommand {\trdeg}{{\mathrm{tr.deg\,}}}
\newcommand {\mK}{{\Bbbk}}
\newcommand {\bbk}{{\Bbbk}}
\newcommand {\Z}{{\mathbb Z}}
\newcommand {\GR}[2]{{\mathsf{{#1}}}_{#2}}
\newcommand {\BZ}{{\mathbb Z}}
\newcommand {\BN}{{\mathbb N}}
\newcommand {\beq}{\begin{equation}}
\newcommand {\eeq}{\end{equation}}
\newcommand {\rg}{{\rangle}}
\renewcommand {\lg}{{\langle}}
\renewcommand{\le}{\leqslant}
\renewcommand{\ge}{\geqslant}
\newcommand {\BP}{{\mathbb P}}
\newcommand{\bb}{\boldsymbol{b}}
\newcommand {\calH}{{\mathscr{H}}}
\begin{document}
\hfill {\scriptsize May 1, 2024} 
\vskip1ex

\title[Some remarks on periodic gradings]{Some remarks on periodic gradings}
\author[O.\,Yakimova]{Oksana S.~Yakimova}
\address[O.Y.]{Institut f\"ur Mathematik, Friedrich-Schiller-Universit\"at Jena,  07737 Jena,
Deutschland}
\email{oksana.yakimova@uni-jena.de}
\thanks{This work is 
 funded by the DFG (German Research Foundation) --- project number 404144169.}
\keywords{index of Lie algebra, contraction, commutative subalgebra, symmetric invariants}
\subjclass[2010]{17B63, 14L30, 17B08, 17B20, 22E46}
\begin{abstract}
Let $\q$ be a finite-dimensional Lie algebra, $\vth\in {\sf Aut}(\q)$ a finite order automorphism, and $\q_0$ the subalgebra of fixed points of $\vth$.  Using $\vth$ one can construct a pencil $\mathcal P$ of
compatible Poisson brackets on $\gS(\q)$, and thereby a `large' Poisson-commutative subalgebra 
$\gZ(\q,\vth)$ of $\gS(\q)^{\q_0}$. In this article, we study one particular bracket 
$\{\,\,,\,\}_{\infty}\in\mathcal P$ and the related Poisson centre $\cz_\infty$. 
It is shown that $\cz_\infty$ is a polynomial ring, if $\q$ is reductive.
\end{abstract}
\maketitle

\tableofcontents
\section*{Introduction}
\label{sect:intro}

\noindent
The ground field $\mK$ is algebraically closed and $\mathsf{char}(\mK)=0$. 
Let $\q=(\q, [\,\,,\,])$ be a finite-dimensional algebraic Lie algebra, i.e., $\q=\Lie Q$, where $Q$ is a 
connected affine algebraic group. The dual space $\q^*$ is a Poisson variety, i.e., the 
algebra of polynomial functions on $\q^*$, $\bbk[\q^*]\simeq \gS(\q)$, is equipped with the Lie--Poisson 
bracket $\{\,\,,\,\}$. Here $\{x,y\}=[x,y]$ for $x,y\in\q$. 
Poisson-commutative subalgebras of $\bbk[\q^*]$ are  important tools for the study of geometry of
the coadjoint action of $Q$ and representation theory of $\q$. 

\subsection{}    \label{subs:old}
There is a well-known method, {\it the Lenard--Magri scheme}, for constructing ``large" 
Poisson-commutative subalgebras of $\bbk[\q^*]$, which is related to {\it compatible Poisson brackets}, 
see e.g.~\cite{GZ}.  
Two Poisson brackets $\{\,\,,\,\}'$ and $\{\,\,,\,\}''$ are said to be {\it compatible}, if any linear combination 
$\{\,\,,\,\}_{a,b}:=a\{\,\,,\,\}'+b\{\,\,,\,\}''$ with $a,b\in\bbk$ is a Poisson bracket. Then one defines a certain 
dense open subset $\Omega_{\sf reg}\subset\bbk^2$ that corresponds to the {\it regular} brackets in the pencil $\mathcal P=\{ \{\,\,,\,\}_{a,b}\mid (a,b)\in\bbk^2\}$.  
Let $\cz_{a,b}\subset \gS(\q)$ denote the Poisson centre of  $(\gS(\q), \{\,\,,\,\}_{a,b})$. Then
the subalgebra $\gZ\subset\gS(\q)$ generated by  $\cz_{a,b}$
with $(a,b)\in\Omega_{\sf reg}$ is Poisson-commutative w.r.t.~$\{\,\,,\,\}'$ and $\{\,\,,\,\}''$. 
An obvious first step is to take the initial Lie--Poisson 
bracket $\{\,\,,\,\}$ as $\{\,\,,\,\}'$. The rest depends on a clever choice of $\{\,\,,\,\}''$.

\subsection{}    \label{subs:old2}
Let $\vth$ be an automorphism of $\q$ of finite order $m$. Then 
$\q$ is equipped with
a $\BZ_m$-grading $\gt q=\bigoplus_{i=0}^{m-1}\gt q_i$, where each $\q_i$ is an eigenspace of $\vth$. 
One can naturally construct a compatible Poisson bracket
$\{\,\,,\,\}''$ associated with the grading~\cite{OY,fo}.  In this case, all Poisson brackets in $\mathcal P$
are linear and there are two lines $l_1,l_2\subset \bbk^2$ such that
$\Omega=\bbk^2\setminus (l_1\cup l_2)\subset \Omega_{\sf reg}$ and the Lie algebras corresponding to
$(a,b)\in \Omega$ are isomorphic to $\q$. The lines $l_1$ and $l_2$ give rise to new Lie algebras, denoted $\q_{(0)}$ and $\q_{(\infty)}$. These new algebras are different {\it contractions} of $\q$. 
A definition and basic properties of contractions are discussed in Section~\ref{subs:contr-&-inv}. 
Let 
$\ind\q$ denote the {\it index\/} of $\q$ (see Section~\ref{subs:coadj}). Then we have $\ind\q\le\ind\q_{(t)}$ for 
$t\in\{0,\infty\}$. Let $\gS(\q)^{\q}\simeq\bbk[\q^*]^Q$ be the ring of {\it symmetric invariants} of $\q$, i.e., the Poisson centre of $\gS(\q)$. 
Let further $\gZ=\gZ(\q,\vth)$ be the Poisson-commutative subalgebra associated with $\mathcal P$. Many features of 
$\gZ$ depend on the 
properties of $\q_{(0)}$ and $\q_{(\infty)}$.  

In \cite{fo,MZ23}, we have studied the ring $\cz_0=\gS(\q_{(0)})^{\q_{(0)}}$ in case $\q=\g$ is reductive. 
For some $\g$ such that $\ind\g_{(0)}=\rk\g$, this is a polynomial ring with $\rk\g$ generators. However, there are exceptions
even if $m=2$ \cite{Y-imrn}.   
Partial results were obtained for $\cz_\infty=\gS(\q_{(\infty)})^{\q_{(\infty)}}$. Namely, if $\g$ is reductive and 
$\vth$ is an inner automorphism, then $\cz_\infty=\gS(\g_0)$ \cite{fo}. 

In Section~\ref{sect:prelim}, we collect basic facts on the coadjoint action and symmetric invariants. 
Explicit descriptions of the algebras $\q_{(0)}$ and $\q_{(\infty)}$ are presented in Section~\ref{subs:periodic}. 

\subsection{}    \label{subs:new}
If $\ind\q_{(\infty)}>\ind\q$, then $\cz_\infty$ does not have to be Poisson-commutative. 
Our first result states that $\{\cz_\infty^{\q_0},\cz_\infty^{\q_0}\}=0$, if $\q_0^*$ contains a regular in  $\q^*$ element. 
Furthermore, under that assumption, the algebra $\mathsf{alg}\lg \gZ,\cz_\infty^{\q_0}\rg$,
generated by $\gZ$ and $\cz_\infty^{\q_0}$, 
 is still Poisson-commutative,
see Theorem~\ref{inf-inv}. Both statements have applications related to the current algebra $\gt q[t]$. Namely, one can construct a large Poisson-commutative subalgebra of $\gS(\q[t]^{\vth})$ following the ideas of 
\cite[Sect.\,8]{fo}. However, our new approach  works for several non-reductive Lie algebras and does not require  the assumption that $\ind\g_{(0)}=\rk\gt g$, which is imposed in \cite{fo}. The construction will appear in a forthcoming paper.

Section~\ref{red} contains a brief summary of Kac's classification of finite order automorphisms for a semisimple $\g$  \cite{k69}. 
In particular, we  describe 
a relation between the roots of $\g$ and of $\g^\sigma$, where $\sigma$ is a diagram   automorphism of $\g$. 
Then in Section~\ref{sect:3}, we state main results of \cite{fo} on generators of $\gZ(\g,\vth)$. 
Many crucial properties of $\gZ(\g,\vth)$, including its transcendence degree, depend on the equality  
$\ind\g_{(0)}=\rk\g$, see e.g. \cite[Thm\,3.10]{fo} or Theorem~\ref{free-main} here.  
Conjecture 3.1 in \cite{MZ23} states that $\ind\g_{(0)}=\rk\g$ for all $\g$ and all $\vth$. 
In Section~\ref{sec-new}, we recollect several instances, where the equality holds, and provide a few new positive examples,
see Theorem~\ref{ind-au}.  

In the more favourable reductive case, we prove that $\cz_\infty$ and $\cz_\infty^{\g_0}$ are always polynomial rings and describe their generators explicitly, see Section~\ref{sect:infty-g}. 
We consider also the non-reductive Lie algebra $\tilde\g=\g_0\ltimes\g_{(\infty)}$ and its coadjoint representation.  
It is shown that $\ind\tilde\g=\rk\g+\rk\g_0$ and that $\tilde\g$ has the  {\sl codim}--$2$ property, see Theorem~\ref{inf-c2}. 
Then by Theorem~\ref{ggs-inf}, $\gS(\tilde\g)^{\tilde\g}$ is a polynomial ring with $\ind\tilde\g$ generators.
These generators are described explicitly.  

Non-reductive Lie algebras $\q$ such that $\gS(\q)^{\q}$ is a polynomial ring with $\ind\q$ generators attract a lot of attention, see e.g.  
\cite{j,ppy,p07,p09,contr,MZ,FP}.  A quest for this type of algebras continues. Many  examples that are found so far  are related to particular simple Lie algebras.
For instance, assertions of \cite{j} and \cite{ppy} hold in full generality only for $\gt{sl}_n$ and $\gt{sp}_{2n}$.
Note that our results on $\g_{(\infty)}$ and $\tilde\g$ are independent of the type of $\g$ and apply to all finite order automorphisms. 
We prove also that $\tilde\g$ is a Lie algebra of {\it Kostant type} in the terminology of \cite{contr}. 

The Lie algebra $\tilde\g$ is {\it quadratic}, i.e., there is a $\tilde\g$-invariant non-degenerate symmetric bilinear form 
on $\tilde\g$. This implies that the adjoint and coadjoint representations of $\tilde\g$ are isomorphic, see e.g. \cite[Sect.\,1.1]{p09}. In \cite[Sect.\,4]{p09}, invariants of the adjoint action of $\tilde\g$ are studied. In the notation of that 
paper, $\tilde\g=\g\langle m+1\rangle_0$.  A more general object $\g\langle nm+1\rangle_0$ of \cite{p09} can be  also interpreted in our context. 

Suppose that $\g=\h^{\oplus n}$ is a sum of $n$ copies of a reductive Lie algebra $\h$ and $\vth$ is a composition of a finite order automorphism $\tilde\vth\in\Aut(\h)$ and a cyclic permutation of the 
summands. Formally we have 
\begin{equation}\label{tildet}
\vth\left((x_1,x_2,\ldots,x_n)\right)=(x_n,\tilde\vth(x_1),x_2,\ldots,x_{n-1})
\ \ 
\text{ for } 
\ \ (x_1,x_2,\ldots,x_n)\in\gt h\oplus\gt h\oplus\ldots\oplus\gt h. 
\end{equation} 
Then $\tilde\g$ associated with $\vth$ is equal to $\gt h\langle nm+1\rangle_0$. 
In \cite[Thm\,4.1{\sf (ii)}]{p09}, it is shown that $\bbk[\g\langle nm+1\rangle_0]^{\g\langle nm+1\rangle_0}$ is a polynomial algebra of Krull dimension $n\rk\g+\rk\g_0$, if $\g_0$ contains a regular nilpotent element of $\g$. 
Here we have no assumptions on $\g_0$, i.e., we show that \cite[Thm\,4.1{\sf (ii)}]{p09} holds for all $\g$ and $\vth$. 

Our general reference for semisimple Lie groups and algebras is \cite{t41}.

\section{Preliminaries on Poisson brackets and polynomial contractions}
\label{sect:prelim}

\noindent
Let $Q$\/ be a connected affine algebraic group with Lie algebra $\q$. The  symmetric algebra of 
$\q$ over $\mK$ is $\BN_0$-graded, i.e., $\gS(\q)=\bigoplus_{i\ge 0}\gS^i(\q)$. It is identified with the 
algebra of polynomial functions on the dual 
space $\q^*$.   
\subsection{The coadjoint representation}
\label{subs:coadj}
 The group $Q$ acts on $\q^*$ via the coadjoint 
representation and then $\ad^*\!: \q\to \mathrm{GL}(\q^*)$ is the {\it coadjoint representation\/} of $\q$. 
The algebra 
of $Q$-in\-va\-riant polynomial functions on $\q^*$ is denoted by $\gS(\q)^{Q}$ or $\mK[\q^*]^Q$.
Write $\mK(\q^*)^Q$ for the field of $Q$-invariant rational functions on $\q^*$.
\\ \indent
Let $\q^\xi=\{x\in\q\mid \ad^*(x){\cdot}\xi=0\}$ be the {\it stabiliser\/} in $\q$ of $\xi\in\q^*$. The 
{\it index of\/} $\q$, $\ind\q$, is the minimal codimension of $Q$-orbits in $\q^*$. Equivalently,
$\ind\q=\min_{\xi\in\q^*} \dim \q^\xi$. By the Rosenlicht theorem (see~\cite[IV.2]{spr}), one also has 
$\ind\q=\trdeg\mK(\q^*)^Q$. 
Set $\bb(\q)=(\dim\q+\ind\q)/2$. 
Since the $Q$-orbits in $\q^*$ are even-dimensional, $\bb(\q)$ is an integer. If $\q$ is reductive, then  
$\ind\q=\rk\q$ and $\bb(\q)$ equals the dimension of a Borel subalgebra.  

The Lie--Poisson bracket on $\gS(\q)$ is defined on $\gS^1(\q)=\q$ by $\{x,y\}:=[x,y]$. It is then extended to higher degrees via the Leibniz rule. Hence $\gS(\q)$ has the usual associative-commutative structure and additional Poisson structure.  
Whenever we refer to {\sl subalgebras\/} of $\gS(\q)$, we always mean the associative-commutative structure.
Then  a subalgebra $\ca\subset \gS(\q)$ is said to be {\it Poisson-commutative}, 
if $\{H,F\}=0$ for all $H,F\in\ca$. It is well known that if $\ca$ is Poisson-commutative, then $\trdeg\ca\le \bb(\q)$, see e.g.~\cite[0.2]{vi90}. More generally, suppose that 
$\h\subset\q$ is a Lie subalgebra 
and $\ca\subset  \gS(\q)^\h$ is Poisson-commutative. Then
$\trdeg\ca\le \bb(\q)-\bb(\h)+\ind\h$, see~\cite[Prop.\,1.1]{m-y}.

The {\it centre\/} of the Poisson algebra $(\gS(\q), \{\,\,,\,\})$ is 
\[
    \cz(\q):=\{H\in \gS(\q)\mid \{H,F\}=0 \ \ \forall F\in\gS(\q)\} =
   \gS(\q)^\q=\bbk[\q^*]^\q =\bbk[\q^*]^Q.
\] 
Since the quotient field of
$\mK[\q^*]^Q$ is contained in $\bbk(\q^*)^Q$, we deduce from the Rosenlicht theorem that
\beq    \label{eq:neravenstvo-ind}
    \trdeg (\gS(\q)^\q)\le \ind\q .
\eeq
The set of {\it regular\/} elements of $\q^*$ is 
\beq       \label{eq:regul-set}
    \q^*_{\sf reg}=\{\eta\in\q^*\mid \dim \q^\eta=\ind\q\}=\{\eta\in\q^*\mid \dim Q{\cdot}\eta \ \text{ is maximal}\} .
\eeq
It is a dense open subset of $\q^*$.
Set $\q^*_{\sf sing}=\q^*\setminus \q^*_{\sf reg}$.
We say that $\q$ has the {\sl codim}--$n$ property if $\codim \q^*_{\sf sing}\ge n$. 
The {\sl codim}--$2$ property  is going to be most important for us. 

For $\gamma\in\q^*$, let $\hat\gamma$ be the skew-symmetric bilinear form on $\q$ defined by 
$\hat\gamma(\xi,\eta)=\gamma([\xi,\eta])$ for $\xi,\eta\in\q$. It follows that
$\ker\hat\gamma=\q^\gamma$. The $2$-form $\hat\gamma$ is related to 
the {\it Poisson tensor (bivector)} $\pi$ of the Lie--Poisson bracket $\{\,\,,\,\}$ as follows.

Let $\textsl{d}H$ denote the differential of $H\in \gS(\q)=\bbk[\q^*]$. Then 
$\pi$ is defined by the formula
$\pi(\textsl{d}H\wedge \textsl{d}F)=\{H,F\}$ for $H,F\in\gS(\q)$. Then 
$\pi(\gamma)(\textsl{d}_\gamma H\wedge \textsl{d}_\gamma F)=\{H,F\}(\gamma)$ and therefore
$\hat\gamma=\pi(\gamma)$.
In this terms, $\ind\q=\dim\q-\rk\pi$, where $\rk\pi=\max_{\gamma\in\q^*}\rk\pi(\gamma)$. 

For a subalgebra $A\subset\gS(\q)$ and $\gamma\in\gt q^*$, set 
$\textsl{d}_\gamma A=\left<\textsl{d}_\gamma F \mid F\in A \right>_{\mK}$. By the definition of 
$\gS(\q)^{\q}$, we have 
\begin{equation} \label{incl}
\textsl{d}_\gamma \gS(\q)^{\q}\subset \ker\pi(\gamma)
\end{equation}
 for each $\gamma\in\q^*$. 

\subsection{Contractions and invariants}
\label{subs:contr-&-inv} 
We refer to \cite[Ch.\,7,\,\S\,2]{t41} for basic facts on contractions of Lie algebras.
In this article, we consider contractions of the following form. Let $\bbk^\star=\bbk\setminus\{0\}$ be 
the multiplicative group of $\bbk$ and 
$\vp: \bbk^\star\to \GL(\q)$, $s\mapsto \vp_s$, a polynomial representation. That is, 
the matrix entries of $\vp_{s}:\q\to \q$ are polynomials in $s$ w.r.t. some (any) basis of $\q$.
Define a new Lie algebra structure on the vector space $\q$ and associated Lie--Poisson bracket by 
\beq       \label{eq:fi_s}
      [x, y]_{(s)}=\{x,y\}_{(s)}:=\vp_s^{-1}[\vp_s( x), \vp_s( y)], \ x,y \in \q, \ s\in\bbk^\star.
\eeq
All the algebras $(\q,[\,\,,\,]_{(s)})$  
are isomorphic and  $(\q,[\,\,,\,]_{(1)})$ is the initial Lie algebra $\q$. The induced $\bbk^\star$-action on the variety of structure constants is not necessarily 
polynomial, i.e., \ $\lim_{s\to 0}[x, y]_{(s)}$ may not exist for all $x,y\in\q$. Whenever such a limit exists, 
we obtain a new linear Poisson bracket, denoted $\{\,\,,\,\}_0$, and thereby a new Lie algebra $\q_{(0)}$, 
which is said to be a {\it contraction\/} of $\q$. If we wish to stress that this construction is determined 
by $\vp$, then we write $\{x, y\}_{(\vp,s)}$ for the bracket in~\eqref{eq:fi_s} and say that $\q_{(0)}=\q_{(0,\vp)}$ is the 
$\vp$-{\it contraction\/} of $\q$ or is the {\it zero limit of $\q$ w.r.t.}~$\vp$.  
A criterion for the existence of $\q_{(0)}$
can be given in terms of Lie brackets of the $\vp$-eigenspaces in $\q$, see~\cite[Sect.\,4]{Y-imrn}. 
We identify all algebras $\q_{(s)}$ and 
$\q_{(0)}$ as vector spaces. The semi-continuity of index implies that $\ind\q_{(0)}\ge \ind\q$.

The map $\vp_s$, $s\in\bbk^\star$, is naturally extended to an invertible transformation of 
$\gS^j(\q)$, which we also denote by $\vp_s$. The resulting graded map 
$\vp_s:\gS(\q)\to\gS(\q)$ is nothing but the comorphism associated with $s\in\bbk^\star$ and
the dual representation
$\vp^*:\bbk^\star\to \GL(\q^*)$.
Since $\gS^j(\q)$ has a basis that consists of $\vp(\bbk^\star)$-eigenvectors, any $F\in\gS^j(\q)$  
can be written as $F=\sum_{i\ge 0}F_i$, 
where the sum is finite and $\vp_s(F_i)=s^iF_i\in\gS^j(\q)$. Let $F^\bullet$ denote the non-zero component $F_i$ with maximal $i$.

\begin{prop}[{\cite[Lemma~3.3]{contr}}]     \label{prop:bullet}
If $F\in\cz(\q)$ and $\q_{(0)}$ exists, then $F^\bullet\in \cz(\q_{(0)})$. 
\end{prop}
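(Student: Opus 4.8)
The plan is to work with the $\vp(\bbk^\star)$-grading on $\gS(\q)$ and track what happens to the Poisson bracket as $s\to 0$. First I would observe that the bracket $\{\,\,,\,\}_{(s)}$ on $\gS(\q)$ is, like $\{\,\,,\,\}$ itself, a biderivation, so for $F\in\cz(\q)$ and any $G\in\gS(\q)$ we have $\{\vp_s(F),\vp_s(G)\}=\vp_s\bigl(\{F,G\}_{(s)}\bigr)$ — equivalently $\{F,G\}_{(s)}=\vp_s^{-1}\{\vp_s(F),\vp_s(G)\}$. Now fix $F\in\cz(\q)$, write its $\vp$-homogeneous decomposition $F=\sum_{i\le N}F_i$ with $F^\bullet=F_N$, and pick an arbitrary $G\in\gS(\q)$ which I may likewise assume $\vp$-homogeneous of weight $j$, say. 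The key point is to compare $\vp$-weights: $\{F_i,G\}$ is a sum of $\vp$-homogeneous pieces, but since $\{\,\,,\,\}$ is itself built from the Lie bracket (which need not be $\vp$-homogeneous), one has to be a little careful — the cleanest route is to use the defining formula $\{x,y\}_0=\lim_{s\to 0}\vp_s^{-1}[\vp_s(x),\vp_s(y)]$ extended by the Leibniz rule to all of $\gS(\q)$, and to note that this limit, when it exists, picks out the component of $\{\vp_s(\cdot),\vp_s(\cdot)\}$ of the appropriate leading order in $s$.

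Concretely, the second step is this: because $F\in\cz(\q)$, we have $\{F,G\}=0$ identically, hence $\{\vp_s(F),\vp_s(G)\}=0$ for every $s\in\bbk^\star$ (as $\vp_s$ is an algebra automorphism of the \emph{associative} structure, and $\{\,\,,\,\}$ transforms as stated). Expanding $\vp_s(F)=\sum_i s^i F_i$ and $\vp_s(G)=s^j G$ and collecting powers of $s$ gives $\sum_i s^{i+j}\{F_i,G\}=0$, but again $\{F_i,G\}$ is generally not $\vp$-homogeneous, so instead I would argue with the contraction bracket directly: $\{F,G\}_{(\vp,s)}=\vp_s^{-1}\{\vp_s(F),\vp_s(G)\}=0$ for all $s$, so passing to the limit $s\to 0$ (which exists by hypothesis that $\q_{(0)}$ exists, and extends to all of $\gS(\q)$ by the Leibniz rule), $\{F,G\}_0=0$. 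This already shows $F\in\cz(\q_{(0)})$ for the \emph{full} $F$; the content of the proposition is the sharper statement about the leading component $F^\bullet$.

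So the third and decisive step is to extract $F^\bullet$. Rescale: for $s\in\bbk^\star$, consider $s^{-N}\vp_s(F)=\sum_{i\le N}s^{i-N}F_i$, which tends to $F^\bullet=F_N$ as $s\to\infty$ (or, symmetrically, $s^{-N}\vp_s(F)\to F_N$ as we send the lowest-order-to-highest appropriately — the point is that $F_N$ is isolated as the extreme component). The relation $\{\vp_s(F),\vp_s(G)\}=0$, multiplied by $s^{-N}$, reads $\{s^{-N}\vp_s(F),\vp_s(G)\}=0$; if $G$ is $\vp$-homogeneous of weight $j$ this is $s^{-N}\{\vp_s(F),s^j G\}=0$. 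Here I would instead set up the computation inside $\gS(\q_{(0)})$: the contraction bracket $\{\,\,,\,\}_0$ is graded for the $\vp$-grading by construction (it is the leading term of $\{\,\,,\,\}_{(\vp,s)}$ in $s$), so $\{F,G\}_0=\sum_i\{F_i,G\}_0$ with $\{F_i,G\}_0$ the weight-$(i+j-c)$ component for the appropriate shift $c$; since the whole sum vanishes and the pieces live in distinct graded components, each $\{F_i,G\}_0=0$, in particular $\{F^\bullet,G\}_0=\{F_N,G\}_0=0$. As $G$ was an arbitrary $\vp$-homogeneous element and these span $\gS(\q)$, we conclude $F^\bullet\in\cz(\q_{(0)})$.

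The main obstacle I anticipate is the bookkeeping in that last step: making precise that the contraction bracket $\{\,\,,\,\}_0$ is homogeneous with respect to the $\vp$-grading and identifying exactly which graded component $\{F_i,G\}_0$ lands in, so that one may legitimately separate the vanishing sum $\sum_i\{F_i,G\}_0=0$ into its homogeneous constituents. This is where one must be careful that $\q_{(0)}$ being a genuine limit (not merely a formal one) guarantees $\{\,\,,\,\}_0$ exists and is itself $\vp$-graded; given that, the separation of components is automatic and the leading piece $F^\bullet$ falls out. Everything else — the transformation rule for the bracket under $\vp_s$, the Leibniz extension, the existence of the limit — is either routine or already granted by the hypotheses.
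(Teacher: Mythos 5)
The paper does not prove this proposition itself; it is imported verbatim from \cite[Lemma~3.3]{contr}, so there is no in-text proof to compare with. Your argument, however, has a genuine gap in its second step, and the gap propagates into the conclusion. You claim that $\{\vp_s(F),\vp_s(G)\}=0$ because $\{F,G\}=0$, and hence that $\{F,G\}_{(\vp,s)}=0$ for all $s$ and $\{F,G\}_0=0$ in the limit. This is false: $\vp_s$ is an automorphism of the associative algebra $\gS(\q)$ but not of the Poisson structure (if it were, there would be no contraction), so centrality of $F$ for $\{\,\,,\,\}$ gives only that $\vp_s^{-1}(F)$ is central for $\{\,\,,\,\}_{(s)}$, not that $F$ is. Concretely, writing $\vp_s(F)=\sum_i s^iF_i$ and $\vp_s(G)=s^jG$, one gets $\{\vp_s(F),\vp_s(G)\}=\sum_i s^{i+j}\{F_i,G\}$; only the unweighted sum $\sum_i\{F_i,G\}$ vanishes. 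Your intermediate assertion that the \emph{full} $F$ lies in $\cz(\q_{(0)})$ is also false in general: for $\g=\gt{sl}_2$ with $\g_0=\te$, take $F=\tfrac12 h^2+2ef$, so $F^\bullet=2ef$; then $\{\tfrac12 h^2,e\}_0=2he\ne0$ while $\{2ef,e\}_0=0$, hence $\{F,e\}_0\ne 0$. Since your final step deduces $\{F^\bullet,G\}_0=0$ by splitting the (allegedly zero) sum $\{F,G\}_0=\sum_i\{F_i,G\}_0$ into graded pieces, it rests on this false premise. Note also that, were the premise true, the same splitting would make \emph{every} $F_i$ central for $\{\,\,,\,\}_0$, which is absurd and should have served as a warning.

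The repair is short and uses only ingredients you already assembled. Apply the graded decomposition to the true identity $0=\{F,G\}=\sum_i\{F_i,G\}$ with $G$ a $\vp$-eigenvector of weight $j$. Existence of $\q_{(0)}$ means precisely that for homogeneous $x,y$ of weights $a,b$ the element $\{x,y\}$ has $\vp$-weight components only in weights $\le a+b$, and $\{x,y\}_0$ is its weight-$(a+b)$ component; by the Leibniz rule the same holds for $\{F_i,G\}$. Hence the weight-$(N+j)$ component of $\sum_i\{F_i,G\}$ is exactly $\{F_N,G\}_0=\{F^\bullet,G\}_0$, which therefore vanishes. Equivalently, one can observe that $s^N\vp_s^{-1}(F)$ is central for $\{\,\,,\,\}_{(s)}$ for every $s\in\bbk^\star$ and converges to $F^\bullet$ as $s\to0$, while $\{\,\,,\,\}_{(s)}\to\{\,\,,\,\}_0$; either route closes the gap.
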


\subsection{Periodic gradings of Lie algebras and related compatible brackets }      
\label{subs:periodic}
Let $\vartheta\in\Aut(\q)$ be a Lie algebra automorphism of finite order $m\ge 2$ and $\zeta=\sqrt[m]1$ 
a primitive root of unity. Write also $\oth$ for the order of $\vartheta$.
If $\q_i$ is the $\zeta^i$-eigenspace of $\vartheta$, $i\in \BZ_m$, then the direct sum
$\q=\bigoplus_{i\in \BZ_m}\q_i$ is a {\it periodic grading\/} or $\BZ_m$-{\it grading\/} of 
$\q$. The latter means that $[\q_i,\q_j]\subset \q_{i+j}$ for all $i,j\in \BZ_m$. Here $\q_0=\q^\vartheta$ 
is the fixed-point subalgebra for $\vartheta$ and each $\q_i$ is a $\q_0$-module.

We choose $\{0,1,\dots, m-1\}\subset\BZ$ as a fixed set of representatives for $\BZ_m=\BZ/m\BZ$. 
Under this convention, we have
$\q=\q_0\oplus\q_1\oplus\ldots\oplus\q_{m-1}$ and
\beq   \label{eq:Z_m}
[\q_i,\q_j]\subset \begin{cases}  \q_{i+j}, &\text{ if } \ i+j\le m{-}1, \\
 \q_{i+j-m}, &\text{ if } \ i+j\ge m. \end{cases}
\eeq
This is needed below, when we consider $\BZ$-graded contractions of $\q$ associated with $\vartheta$. 

The presence of $\vartheta$ allows us to split the Lie--Poisson bracket on $\q^*$ into a sum of two compatible Poisson brackets.
Consider the polynomial representation $\vp\!:\bbk^\star\to {\rm GL}(\q)$ 
such that $\vp_s(x)=s^j x$ for $x\in \q_j$.
As in Section~\ref{subs:contr-&-inv}, this defines a family of linear Poisson brackets on $\gS(\q)$ 
parametrised by $s\in\bbk^\star$, see~\eqref{eq:fi_s}.

Below we outline some results of \cite{fo}:
\begin{itemize}
\item[\sf (i)]  \ There is a limit\/ $\lim_{s\to 0}  \{x,y\}_{(s)}=:\{x,y\}_{0}$, which is a linear 
Poisson bracket on $\gS(\q)$.
\item[\sf (ii)] \ The difference $\{\,\,,\,\}-\{\,\,,\,\}_{0}=:\{\,\,,\,\}_{\infty}$ is a linear Poisson bracket on 
$\gS(\q)$, which 
 is obtained as the zero limit w.r.t. the polynomial  
representation $\psi\!:\bbk^\star \to {\rm GL}(\q)$ such that $\psi_s=s^m{\cdot}\vp_{s^{-1}}=
s^m{\cdot}\vp_s^{-1}$, $s\in\bbk^\star$. In other words, 
$\{\,\,,\,\}_{\infty}=\lim_{s\to 0}\{\,\,,\,\}_{(\psi,s)}$.
\item[\sf (iii)] \ For any $\vartheta\in \Aut(\q)$ of finite order,
the Poisson brackets $\{\,\,,\,\}_{0}$ and $\{\,\,,\,\}_{\infty}$ are compatible, and the corresponding pencil
contains the initial Lie--Poisson bracket.
\end{itemize}

Set
\[
   \{\,\,,\,\}_{t}  = \{\,\,,\,\}_{0}+ t\{\,\,,\,\}_{\infty} , 
\]
where $t\in \BP:=\bbk\cup\{\infty\}$ and the value $t=\infty$ corresponds to the bracket
$\{\,\,,\,\}_{\infty}$. Let $\q_{(t)}$ stand for the Lie algebra corresponding to $\{\,\,,\,\}_{t}$.
All these Lie algebras have the same underlying vector space.

\begin{prop}[{\cite[Prop.~2.3]{fo}}]   \label{prop:graded-limits}
The Lie algebras $\q_{(0)}$ and $\q_{(\infty)}$ are $\BN_0$-graded. More precisely, if\/ $\rr[i]$ stands for the component of grade $i\in\BN_0$ in an $\BN_0$-graded Lie algebra $\rr$, then 
\[
    \q_{(0)}[i]=\begin{cases} \q_i \  & \text{ for } i=0,1,\dots,m{-}1 \\
       0 & \text{ otherwise}   \end{cases}, \ 
    \q_{(\infty)}[i]=\begin{cases} \q_{m-i} \  & \text{ for } i=1,2,\dots,m  \\
      0 & \text{ otherwise}   \end{cases}.
\]
In particular, $\q_{(\infty)}$ is nilpotent and the subspace $\q_0$, which is the 
highest grade component of $\q_{(\infty)}$, belongs to the centre of $\q_{(\infty)}$. \qed
\end{prop}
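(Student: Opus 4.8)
The plan is to evaluate the family of Lie--Poisson brackets $\{\,\,,\,\}_{(s)}$ of~\eqref{eq:fi_s} on $\vartheta$-homogeneous elements, take the limits $s\to 0$ explicitly, and then read off the gradings; everything else is bookkeeping. Fix $x\in\q_i$ and $y\in\q_j$ with $0\le i,j\le m-1$. Since $\vp_s$ multiplies $\q_k$ by $s^k$, we get $[\vp_s(x),\vp_s(y)]=s^{i+j}[x,y]$, and~\eqref{eq:Z_m} locates $[x,y]$: it lies in $\q_{i+j}$ if $i+j\le m-1$ and in $\q_{i+j-m}$ if $i+j\ge m$. Applying $\vp_s^{-1}$ accordingly yields $\{x,y\}_{(s)}=[x,y]$ for all $s$ in the first case, and $\{x,y\}_{(s)}=s^m[x,y]$ in the second. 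Hence the limit exists and, recalling $\{\,\,,\,\}_\infty=\{\,\,,\,\}-\{\,\,,\,\}_0$,
\[
\{x,y\}_0=\begin{cases}[x,y], & i+j\le m-1,\\ 0, & i+j\ge m,\end{cases}
\qquad
\{x,y\}_\infty=\begin{cases}0, & i+j\le m-1,\\ [x,y], & i+j\ge m.\end{cases}
\]

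Next I would interpret these two rules as gradings. For $\q_{(0)}$, assign $\q_i$ the grade $i$ for $0\le i\le m-1$ and put nothing in higher grades: the formula above says $\{\,\,,\,\}_0$ maps $\q_i\times\q_j$ into $\q_{i+j}$ when $i+j\le m-1$ and to $0$ otherwise, which is precisely an $\BN_0$-grading with $\q_{(0)}[i]=\q_i$. For $\q_{(\infty)}$, assign $\q_k$ the grade $m-k$, i.e.\ $\q_{(\infty)}[i]=\q_{m-i}$ for $1\le i\le m$: the bracket $\{\,\,,\,\}_\infty$ kills $\q_i\times\q_j$ unless $i+j\ge m$, and in that case it lands in $\q_{i+j-m}$, whose grade $m-(i+j-m)=(m-i)+(m-j)$ is the sum of the two grades, so the grading is compatible with the bracket. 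All nonzero components of $\q_{(\infty)}$ sit in grades $1,\dots,m$ and the bracket strictly raises the grade, so the $k$-th term of the lower central series is supported in grades $\ge k$ and vanishes once $k>m$; thus $\q_{(\infty)}$ is nilpotent (of class $\le m$). Finally $\q_0=\q_{(\infty)}[m]$ is the top-grade component, and since $0+j=j\le m-1<m$ for every $j$ the bracket $\{\q_0,\q_j\}_\infty$ vanishes, so $\q_0$ is central in $\q_{(\infty)}$.

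I do not expect a real obstacle: the entire proof is the one-line computation of $\{x,y\}_{(s)}$ on homogeneous elements, the point being that it is the ``wrap-around'' part of~\eqref{eq:Z_m} (the terms with $i+j\ge m$) that survives in the limit defining $\{\,\,,\,\}_\infty$, while the non-wrapping part survives in $\{\,\,,\,\}_0$. The only thing that needs care is keeping the two opposite reindexing conventions ($i\mapsto i$ versus $i\mapsto m-i$) straight and checking that, under them, the rule $[\q_i,\q_j]\subset\q_{i+j\bmod m}$ becomes an honest grade-additive rule in each case.
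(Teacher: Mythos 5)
Your computation is correct and is exactly the standard argument: evaluating $\{\,\,,\,\}_{(s)}$ on $\vartheta$-homogeneous elements shows the non-wrapping part of~\eqref{eq:Z_m} survives in $\{\,\,,\,\}_0$ and the wrap-around part (rescaled by $s^m$) survives in $\{\,\,,\,\}_\infty$, after which the two gradings, the nilpotency, and the centrality of $\q_0$ follow by the bookkeeping you describe. The paper itself omits the proof and cites \cite[Prop.~2.3]{fo}, so there is nothing to contrast; your argument is the expected one and complete.
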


Suppose that $t\in\bbk^\star$. Then $\{\,\,,\,\}_{t}= \{\,\,,\,\}_{(s)}$, where $s^m=t$. 
The algebras $\q_{(t)}$ 
with $t\in\bbk^\star$ are isomorphic and they have one and the same index. We say that $t\in\BP$ is 
{\it regular} if $\ind\q_{(t)}=\ind\q$ and write $\BP_{\sf reg}$ for the set of regular values.
Then $\BP_{\sf sing}:=\BP\setminus \BP_{\sf reg}\subset \{0,\infty\}$  is the set of singular values.

Let $Q_0\subset Q$ be the connected subgroup of $Q$ with 
$\Lie Q_0=\q_0$. It is easy to see that $Q_0$ is an algebraic group. Hence there are  
connected algebraic
groups  $Q_{(t)}$ such that $\q_{(t)}=\Lie Q_{(t)}$  for each $t\in\BP$. 

Let $\cz_t$ be the centre of the Poisson algebra $(\gS(\q),\{\,\,,\,\}_t)$.
In particular, 
$\cz_1=\gS(\q)^{\q}$. If $t=s^m\in\bbk^\star$, then $\cz_t=\vp_s^{-1}(\cz_1)$.  
By Eq.~\eqref{eq:neravenstvo-ind}, we have $\trdeg\cz_t\le\ind\q_{(t)}$. 
In \cite{fo}, we have studied the 
subalgebra $\gZ\subset\gS(\q)$ generated by the centres $\cz_t$ with $t\in\BP_{\sf reg}$, i.e.,
\[
     \gZ=\gZ(\gt q,\vth)=\mathsf{alg}\langle\cz_t \mid t\in\BP_{\sf reg}\rangle .
\]
By a general property of compatible brackets, the algebra $\gZ$
is Poisson-commutative w.r.t. {\bf all} brackets $\{\,\,,\,\}_t$ with $t\in\BP$, cf. \cite[Sect.~2]{OY}. 
Note that the Lie subalgebra $\q_0\subset\q=\q_{(1)}$ is also the {\bf same} Lie subalgebra in any $\q_{(t)}$ with 
$t\ne\infty$ (cf. Proposition~\ref{prop:graded-limits} for $\q_{(0)}$). Therefore, 
\begin{equation}         \label{inclz}
      \cz_t\subset\gS(\q)^{\q_0} \ \text{ for } \,t\ne\infty.
\end{equation}

{\bf Convention.} 
We think of $\q^*$ as the dual space for any Lie algebra $\q_{(t)}$
and sometimes omit the subscript `$(t)$' in $\q_{(t)}^*$. However, if $\xi\in\q^*$,
then the stabiliser of $\xi$ with respect to the coadjoint representation
of $\q_{(t)}$ is denoted by $\q_{(t)}^\xi$. Set  $\q^*_{\infty,\sf reg}:=(\q_{(\infty)}^*)_{\sf reg}$. 

\subsection{Good generating systems} \label{sec-ggs}
Let $\g$ be a reductive Lie algebra. Consider a contraction $\g_{(0)}$ of $\g$ given by 
$\{\vp_s \mid s\in\bbk^\star\}$. 
Let $\{x_1,\ldots,x_{\dim\g}\}$ be basis of $\g$ and $\omega=x_1\wedge\ldots\wedge x_{\dim\g}$ a volume 
form. Then $\varphi(\omega)=s^D \omega$, where $D$ is a non-negative integer. 
We set $D_\vp:=D$. 

The Poisson centre $\gS(\g)^\g$ is a polynomial algebra of Krull dimension $l=\rk\g$ and 
$\ind\g=l$. Hence one has now the equality in Eq.~\eqref{eq:neravenstvo-ind}.
Let $\{H_1,\dots,H_l\}$  be a set of homogeneous algebraically independent generators of
$\gS(\g)^\g$ and $d_i=\deg H_i$. Then $\sum_{i=1}^l d_i=\bb(\g)$. As above, each $H_j$ decomposes as 
$H_j=\sum_{i\ge 0} H_{j,i}$, where $\varphi_s(H_j)=\sum_{i\ge 0} s^i H_{j,i}$. The polynomials
$H_{j,i}$ are called {bi-homogeneous components} of $H_j$. By definition, the $\vp$-{\it degree\/} of
$H_{j,i}$ is $i$, also denoted by $\deg_\vp H_{j,i}$.
Then $H_j^\bullet$ is the non-zero bi-homogeneous component of $H_j$ with
maximal $\vp$-degree. We set $\deg_{\vp}\! H_j=\deg_{\vp}\! H_j^\bullet$ and 
$d_j^\bullet=\deg_{\vp}\! H_j^\bullet$.

\begin{df}
Let us say that $H_1,\dots,H_l$ is a {\it good generating system} in $\gS(\g)^\g$
({\sf g.g.s.}\/ {\it for short}) for $\vp$, if $H_1^\bullet,\dots,H_l^\bullet$ are
algebraically independent. Then we also say that $\vp$ {\it admits\/} a {g.g.s.}
\end{df}
\noindent
The property of being `good' really depends on a generating system. 
The importance of {\sf g.g.s.} is manifestly seen in the following  result.

\begin{thm}[{\cite[Theorem\,3.8]{contr}}]    \label{thm:kot14}
Let $H_1,\dots,H_l$ be an arbitrary set of homogeneous algebraically independent generators of\/ $\gS(\g)^\g$. Then
\begin{itemize}
\item[\sf (i)] \ $\sum_{j=1}^l \deg_{\vp}\! H_j\ge D_\vp$\,;
\item[\sf (ii)] \  $H_1,\dots,H_l$ is a {\sf g.g.s.} if and only if\/ $\sum_{j=1}^l \deg_{\vp}\! H_j=D_\vp$\,;
\item[\sf (iii)] \  if\/ $\g_{(0)}$ has the {\it codim}--$2$ property, $\ind\g_{(0)}=l$, and  $H_1,\dots,H_l$ is a {\sf g.g.s.}, then
$\cz_0=\gS(\g_{(0)})^{\g_{(0)}}$ is a polynomial algebra freely generated by 
$H_1^\bullet,\dots,H_l^\bullet$ and 
\[
\phantom{1234156775341234789}
\{\xi\in\g^*\mid \textsl{d}_\xi H_1^\bullet\wedge\ldots\wedge\textsl{d}_\xi H_l^\bullet=0\}=(\g_{(0)})^*_{\sf sing}. 
\phantom{1234516775413234789}\Box
\]
\end{itemize}
\end{thm}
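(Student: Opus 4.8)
The plan is to reduce everything to a single ``Kostant identity'' for the Jacobian of the invariants and then to read off (i)--(iii) by tracking $\vp$-degrees and leading components. First I would isolate the identity in the form I need: for an algebraic Lie algebra $\q$ with the {\sl codim}--$2$ property, with $l=\ind\q$, $2r=\dim\q-l=\rk\pi$, and a translation-invariant volume form $\Omega$ on $\q^*$, set $\varpi_\q:=\iota_{\pi^{\wedge r}}\Omega\in\gS(\q)\otimes\bigwedge^{l}\q$; then $\varpi_\q(\gamma)\ne 0$ exactly for $\gamma\in\q^*_{\sf reg}$, where it spans $\bigwedge^{l}\q^\gamma$, so the zero locus of $\varpi_\q$ is $\q^*_{\sf sing}$. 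The claim is that if $f_1,\dots,f_l\in\gS(\q)^\q$ are homogeneous, algebraically independent, and $\sum_i\deg f_i=\bb(\q)$, then $\textsl{d}f_1\wedge\dots\wedge\textsl{d}f_l=c\,\varpi_\q$ with $c\in\bbk^\star$, and $\gS(\q)^\q=\bbk[f_1,\dots,f_l]$. To prove this I would use \eqref{incl}: each $\textsl{d}_\gamma f_i$ lies in $\q^\gamma=\ker\pi(\gamma)$, so on $\q^*_{\sf reg}$ the Jacobian is a regular-function multiple $q$ of $\varpi_\q$; the {\sl codim}--$2$ property extends $q$ to $\q^*$, the degree count $\sum_i(\deg f_i-1)=\bb(\q)-l=r=\deg\varpi_\q$ forces $q$ to be constant, and it is nonzero since algebraically independent $f_i$ have generically independent differentials. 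The vanishing locus of the Jacobian is then that of $\varpi_\q$, i.e.\ $\q^*_{\sf sing}$; and $\gS(\q)^\q=\bbk[f_1,\dots,f_l]$ follows by the standard argument: for $F\in\gS(\q)^\q$ one has $\textsl{d}_\gamma F\in\langle\textsl{d}_\gamma f_1,\dots,\textsl{d}_\gamma f_l\rangle$ on $\q^*_{\sf reg}$, and {\sl codim}--$2$ plus factoriality of $\gS(\q)$ upgrade this to $F\in\bbk[f_1,\dots,f_l]$.

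For (iii), I would apply this identity to $\q=\g_{(0)}$ with $f_j=H_j^\bullet$. By hypothesis $\g_{(0)}$ has the {\sl codim}--$2$ property and $\ind\g_{(0)}=l$, so $\bb(\g_{(0)})=(\dim\g+l)/2=\bb(\g)=\sum_j d_j$. Proposition~\ref{prop:bullet} gives $H_j^\bullet\in\gS(\g_{(0)})^{\g_{(0)}}$, each homogeneous of degree $d_j$, so $\sum_j\deg H_j^\bullet=\bb(\g_{(0)})$; and algebraic independence of the $H_j^\bullet$ is precisely the g.g.s.\ hypothesis. The identity for $\g_{(0)}$ then yields both statements of (iii) at once.

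For (i) and (ii), I would apply the identity to $\q=\g$ itself --- reductive, so {\sl codim}--$2$ holds and $\sum_j d_j=\bb(\g)$ --- obtaining $\textsl{d}H_1\wedge\dots\wedge\textsl{d}H_l=c\,\varpi_\g$ with $c\in\bbk^\star$, and compare $\vp$-degrees of the two sides, using that $\textsl{d}$ preserves $\vp$-degree and $\vp$ acts diagonally on $\gS(\g)\otimes\bigwedge^{l}\g$. The leading $\vp$-component of the left side has $\vp$-degree $\le\sum_j d_j^\bullet$, with equality iff $\textsl{d}H_1^\bullet\wedge\dots\wedge\textsl{d}H_l^\bullet\ne 0$, i.e.\ iff the $H_j^\bullet$ are algebraically independent. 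On the right, $\varpi_\g=\iota_{\pi_\g^{\wedge r}}\Omega$ with $\vp_s(\Omega)=s^{D_\vp}\Omega$; since $\g_{(0)}$ exists, every $\vp$-weight occurring in $\pi_\g$ is $\le 0$ and the weight-$0$ part is $\pi_{\g_{(0)}}$, whence --- once one knows $\pi_{\g_{(0)}}^{\wedge r}\ne 0$, i.e.\ $\ind\g_{(0)}=\rk\g$ --- the leading $\vp$-component of $\varpi_\g$ has $\vp$-degree $D_\vp$ and equals $\iota_{\pi_{\g_{(0)}}^{\wedge r}}\Omega=\varpi_{\g_{(0)}}$. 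Comparing $\vp$-degrees of the two sides gives $\sum_j\deg_\vp H_j\ge D_\vp$ (this is (i)), with equality exactly when $H_1,\dots,H_l$ is a g.g.s.\ (this is (ii)); and taking leading $\vp$-components of the identity recovers $\textsl{d}H_1^\bullet\wedge\dots\wedge\textsl{d}H_l^\bullet=c\,\varpi_{\g_{(0)}}$, the identity used for (iii).

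The main obstacle is the Kostant identity with $c\ne 0$: passing from ``$\textsl{d}f_1\wedge\dots\wedge\textsl{d}f_l$ is proportional to $\varpi_\q$ on a dense open set'' to a genuine equality up to a nonzero constant rests essentially on the {\sl codim}--$2$ property (to extend the ratio over $\q^*_{\sf sing}$) together with the degree identity $\sum_i\deg f_i=\bb(\q)$, and the deduction that $\gS(\q)^\q$ is the polynomial ring $\bbk[f_1,\dots,f_l]$ needs the standard but nontrivial factoriality/integrability argument. Everything else --- the $\vp$-grading bookkeeping, the commutation of $(\cdot)^\bullet$ with $\textsl{d}$ and with $\iota_{(\cdot)}\Omega$, and identifying the weight-$0$ part of $\pi_\g$ with $\pi_{\g_{(0)}}$ --- is routine once that identity is in hand.
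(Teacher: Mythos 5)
The paper does not prove this statement: it is imported verbatim from \cite[Theorem~3.8]{contr} (hence the terminal $\Box$), so there is no internal proof to compare against. Judged on its own terms, your first paragraph (the Kostant-type identity $\textsl{d}f_1\wedge\dots\wedge\textsl{d}f_l=c\,\varpi_\q$ under the {\sl codim}--$2$ property and the degree condition $\sum\deg f_i=\bb(\q)$, plus the algebraic-closedness step, which is exactly Theorem~\ref{ppy-max}) and your deduction of part {\sf (iii)} from it are correct and are essentially the argument of \cite{ppy,contr}.

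The gap is in your treatment of {\sf (i)} and {\sf (ii)}. Your comparison of leading $\vp$-components only shows that $\sum_j\deg_\vp H_j$ and $\deg_\vp\varpi_\g$ both dominate the common $\vp$-degree of the two sides of the identity; to convert this into $\sum_j\deg_\vp H_j\ge D_\vp$ you need the leading $\vp$-component of $\varpi_\g=\iota_{\pi^{\wedge r}}\Omega$ to sit in degree exactly $D_\vp$, i.e.\ you need $\pi_{\g_{(0)}}^{\wedge r}\ne 0$, which is precisely $\ind\g_{(0)}=l$. You flag this condition parenthetically but never establish it, and it is \emph{not} among the hypotheses of {\sf (i)} or {\sf (ii)} --- indeed, whether $\ind\g_{(0)}=\rk\g$ holds is an open conjecture discussed in Section~\ref{sec-new} of this very paper, so it cannot be assumed silently. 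If $\ind\g_{(0)}>l$, then $\deg_\vp\varpi_\g<D_\vp$ and your comparison yields only the weaker bound $\sum_j\deg_\vp H_j\ge\deg_\vp\varpi_\g$; the argument for {\sf (i)} collapses, and with it the ``only if'' direction of {\sf (ii)}. The danger is not hypothetical: for an arbitrary polynomial $\vp$ admitting a zero limit, the inequality of {\sf (i)} can genuinely fail (take $\g=\gt{sl}_2$ with $\vp_s$ acting by $s,1,s^3$ on $h,e,f$; then $\g_{(0)}$ is abelian, $D_\vp=4$, but $\deg_\vp(h^2/2+2ef)=3$). So either one must invoke the additional standing hypotheses of \cite[Theorem~3.8]{contr} that single out the contractions for which {\sf (i)} holds, or supply a different argument for {\sf (i)} that does not pass through $\pi_{\g_{(0)}}^{\wedge r}\ne0$. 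As written, your proof establishes {\sf (iii)} in full and {\sf (i)}--{\sf (ii)} only under the unstated extra assumption $\ind\g_{(0)}=\rk\g$.
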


Let $F_1,\ldots,F_N\in\mK[x_i\mid 1\le i\le n]=\mK[\mathbb A^n]$ be algebraically independent 
homogeneous  polynomials. Set 
\[
\gJ(F_1,\ldots,F_N):=\{x\in\mathbb A^n \mid \textsl{d}_x F_1\wedge\ldots\wedge \textsl{d}_x F_N=0\}.
\]
Then $\gJ(F_1,\ldots,F_N)$ is a proper closed subset of $\mathbb A^n$. 
An open subset of $\mathbb A^n$ is said to be {\it big}, if its complement does not contain divisors. 
Thereby the differentials $\textsl{d}F_i$ with $1\le i\le N$ are  linearly independent on a  big open subset
if and only if $\dim \gJ(F_1,\ldots,F_N)\le n-2$. 

By the {\it Kostant regularity criterion\/} for
$\g$, $\gJ(H_1,\ldots,H_l)=\g^*_{\sf sing}$, see~\cite[Theorem~9]{ko63}. 
By~\cite{ko63}, $\g$ has the {\sl codim}--$3$ property, i.e., $\dim \g^*_{\sf sing}\le \dim\g-3$.

Part {\sf (iii)} of Theorem~\ref{thm:kot14} states that the Kostant regularity criterion holds for $\g_{(0)}$. 
One of the ingredients of the proof, which will be used  in this paper as well, is the following statement.  

\begin{thm}[{\cite[Theorem~1.1]{ppy}}]       \label{ppy-max}  
Let $F_1,\ldots,F_N$ be as above. 
If $\dim\gJ(F_1,\ldots,F_N)\le n-2$, then ${\mathcal F}=\mK[F_j\mid 1\le j\le N]$ is an 
algebraically closed subalgebra of  $\mK[\mathbb A^n]$, i.e., if $H\in \mK[\mathbb A^n]$ is 
algebraic over the field\/ $\mK(F_1,\ldots,F_N)$, then
$H\in \mathcal F$. 
\end{thm}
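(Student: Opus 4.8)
The plan is to run an induction on the degree of $H$: first reduce to the case where $H$ is homogeneous, then feed the algebraic relation between $H$ and $F_1,\dots,F_N$ into the module of differentials, using the codimension hypothesis precisely to ensure that the coefficients produced along the way stay polynomial. Set $K=\mK(F_1,\dots,F_N)$ and let $\widetilde K\subset\mK(\mathbb A^n)$ be the algebraic closure of $K$. The torus $\mK^\star$ acts on $\mathbb A^n$ by scaling, hence on $\mK(\mathbb A^n)$ by $\mK$-algebra automorphisms; since every $F_j$ is homogeneous this action preserves $K$, and therefore also $\widetilde K$. If $H\in\mK[\mathbb A^n]$ is algebraic over $K$, then $H\in\widetilde K$, and since $\widetilde K$ is a $\mK$-subspace stable under $\mK^\star$, it contains every homogeneous component of the polynomial $H$ (extract them from the $\mK^\star$-orbit of $H$ by a Vandermonde argument). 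Hence it suffices to prove that every \emph{homogeneous} $H\in\mK[\mathbb A^n]$ algebraic over $K$ lies in $\mathcal F$, and I would argue this by induction on $e:=\deg H$, the case $e=0$ being trivial.

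So assume $e\ge1$ and pick $P\in\mK[T_1,\dots,T_N,S]$ of minimal degree in $S$ among the nonzero polynomials with $P(F_1,\dots,F_N,H)=0$; algebraic independence of $F_1,\dots,F_N$ forces $\deg_SP\ge1$. Applying $\textsl{d}$ to the identity $P(F_1,\dots,F_N,H)=0$ gives, in $\Omega^1_{\mK(\mathbb A^n)/\mK}$,
\[
\sum_{i=1}^{N}(\partial_{T_i}P)(F_1,\dots,F_N,H)\,\textsl{d}F_i+b\,\textsl{d}H=0,\qquad b:=(\partial_SP)(F_1,\dots,F_N,H).
\]
I would then check $b\ne0$: since $\ch\mK=0$ and $\deg_SP\ge1$, the polynomial $\partial_SP$ is nonzero of strictly smaller degree in $S$, so $b=0$ would be a nonzero algebraic relation among $F_1,\dots,F_N,H$, contradicting the minimality of $\deg_SP$ if it still involves $S$, and the algebraic independence of the $F_i$ if it does not. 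Therefore
\[
\textsl{d}H=\sum_{i=1}^{N}c_i\,\textsl{d}F_i,\qquad c_i:=-b^{-1}(\partial_{T_i}P)(F_1,\dots,F_N,H)\in\mK(F_1,\dots,F_N,H),
\]
so each $c_i$ is algebraic over $K$. As $F_1,\dots,F_N$ are algebraically independent, $\textsl{d}F_1,\dots,\textsl{d}F_N$ are linearly independent over $\mK(\mathbb A^n)$, so these $c_i$ are the unique coefficients expressing $\textsl{d}H$ this way; comparing $\mK^\star$-weights then forces each $c_i$ to be homogeneous of degree $e-d_i$, where $d_i:=\deg F_i\ge1$ (with $c_i=0$ when $e<d_i$).

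Now I would bring in the hypothesis. On $\mathbb A^n\setminus\gJ(F_1,\dots,F_N)$ the differentials $\textsl{d}F_1,\dots,\textsl{d}F_N$ are pointwise linearly independent, so Cramer's rule writes each $c_i$ locally as a quotient of two $N\times N$ minors of the Jacobian matrix of $(F_1,\dots,F_N,H)$; hence $c_i$ is a regular function on $\mathbb A^n\setminus\gJ(F_1,\dots,F_N)$. By hypothesis this complement has codimension $\ge2$ in the smooth variety $\mathbb A^n$, so by Hartogs' extension theorem $c_i$ is regular on all of $\mathbb A^n$, i.e.\ $c_i\in\mK[\mathbb A^n]$. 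Thus each $c_i$ is a homogeneous polynomial, algebraic over $K$, of degree $e-d_i<e$, so the induction hypothesis gives $c_i\in\mathcal F$. Finally, Euler's identity applied to $\textsl{d}H=\sum_ic_i\,\textsl{d}F_i$ yields
\[
e\,H=\sum_{j}x_j\frac{\partial H}{\partial x_j}=\sum_{i=1}^{N}c_i\sum_{j}x_j\frac{\partial F_i}{\partial x_j}=\sum_{i=1}^{N}d_i\,c_i\,F_i\in\mathcal F,
\]
and since $e\ge1$ we conclude $H\in\mathcal F$, completing the induction.

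I expect the main obstacle to be the bookkeeping that makes this induction self-contained: one must know that the coefficients $c_i$ extracted from the differential relation are again polynomials --- this is exactly where $\codim\gJ(F_1,\dots,F_N)\ge2$ is indispensable, through Hartogs' theorem on the smooth variety $\mathbb A^n$ --- that they are again algebraic over $K$, and that they have strictly smaller degree, the last point relying on no $F_i$ being constant. The one further delicate point is the non-vanishing of $b$, which rests on $\ch\mK=0$ together with the minimality of $P$.
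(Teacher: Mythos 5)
Your proposal is correct; note that the paper itself gives no proof of this statement — it is imported verbatim from \cite[Theorem~1.1]{ppy}. Your argument (reduction to homogeneous $H$ via the $\mK^\star$-action, expressing $\textsl{d}H=\sum c_i\,\textsl{d}F_i$ with $c_i$ algebraic over $\mK(F_1,\dots,F_N)$, using the codimension-$2$ hypothesis and normality of $\mathbb A^n$ to see that the $c_i$ are polynomials, and concluding by induction on degree together with Euler's identity) is essentially the proof given in that reference.
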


\section{Properties of $\q_{(\infty)}$ and of the Poisson centre $\cz_\infty\subset\gS(\q_{(\infty)})$}
\label{sect:infty}

\noindent
By Proposition~\ref{prop:graded-limits}, $\q_{(\infty)}$ is a nilpotent  $\BN$-graded Lie 
algebra. 
Recall also that the subspace $\q_0$ belongs to the centre of $\q_{(\infty)}$. 
Let $\pi_t$ be the Poisson tensor of $\gt q_{(t)}$.  We identify $\q_0^*$ with the annihilator 
$\Ann(\bigoplus_{i=1}^{m-1}\q_i)\subset\q^*$ and regard it as a subspace of $\q^*$. 

For any $\xi\in\gt q_0^*\subset\q^*$, we have 
\beq \label{q0-inf}
\q^\xi_{(\infty)}=\q_0\oplus\bigoplus_{j\ge 1}\q_j^\xi,  \ \ \text{ where } \ \ 
\q_j^\xi=\{y\in\q_j \mid \xi([y,\q_{m-j}])=0\}.
\eeq
Furthermore, $\q^\xi=\q_0^\xi\oplus\bigoplus_{j\ge 1}\q_j^\xi$. If this $\xi$ is regular in $\q^*$, then  
$\xi\in(\q_0^*)_{\sf reg}$. 

\begin{thm}            \label{thm:ind-inf}
Suppose that $\gt q_0^*\cap\gt q^*_{\sf reg}\ne\varnothing$. Then 
$\ind\q_{(\infty)}= \dim\q_0+\ind\q-\ind\q_0$.
\end{thm}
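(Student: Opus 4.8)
The plan is to use the Rosenlicht-type formula $\ind\q_{(\infty)}=\dim\q-\rk\pi_\infty$ and compute $\rk\pi_\infty$ by exhibiting a point where the Poisson tensor has large rank, together with the upper bound coming from the generic stabiliser. By semicontinuity of $\rk\pi_\infty(\gamma)$, it suffices to find one $\gamma\in\q^*$ with $\dim\q_{(\infty)}^\gamma\le\dim\q_0+\ind\q-\ind\q_0$, and then to show this quantity is also a lower bound for $\dim\q_{(\infty)}^\gamma$ on a dense set. The natural candidate for the good point is a $\gamma$ whose restriction to $\q_0$ is a $Q_0$-regular element of $\q_0^*$ and which, suitably chosen on the other graded pieces, is regular in $\q^*$ (such exist by the hypothesis $\q_0^*\cap\q^*_{\sf reg}\ne\varnothing$, but I will want $\gamma$ not lying in $\q_0^*$ itself, rather a generic small perturbation).

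First I would establish the lower bound $\ind\q_{(\infty)}\ge\dim\q_0+\ind\q-\ind\q_0$. Here I use that $\q_0$ is central in $\q_{(\infty)}$ (Proposition~\ref{prop:graded-limits}): for \emph{every} $\gamma\in\q^*$, the subspace $\q_0$ sits inside $\q_{(\infty)}^\gamma$, so the coadjoint orbit $Q_{(\infty)}{\cdot}\gamma$ is annihilated by $\ad^*\q_0$ and hence lies in the $\q_0$-fixed part of $\q^*$. Analyzing the differential of the orbit map shows $\dim Q_{(\infty)}{\cdot}\gamma\le\dim\q-\dim\q_0-(\text{something})$; more precisely one computes that the image of $\ad^*\q_{(\infty)}$ at $\gamma$ lies in $\Ann(\q_0)$ and, decomposing along the grading, that the contribution of $\q_j$ to $\q_{(\infty)}^\gamma$ already forces $\dim\q_{(\infty)}^\gamma\ge\dim\q_0+\dim\q^{\bar\gamma}-\dim\q_0^{\bar\gamma}$ for the relevant $\bar\gamma$, using the formula \eqref{q0-inf} and its evident generalization to $\gamma$ near $\q_0^*$. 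Picking $\bar\gamma$ regular gives the bound with $\ind\q$ and $\ind\q_0$.

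For the upper bound I would take $\xi\in\q_0^*\cap\q^*_{\sf reg}$, so that simultaneously $\xi\in(\q_0^*)_{\sf reg}$ (this implication is asserted just before the theorem). By \eqref{q0-inf} we have exactly $\q_{(\infty)}^\xi=\q_0\oplus\bigoplus_{j\ge1}\q_j^\xi$, and by the companion decomposition $\q^\xi=\q_0^\xi\oplus\bigoplus_{j\ge1}\q_j^\xi$ together with the regularity of $\xi$ in both $\q^*$ and $\q_0^*$ we get $\sum_{j\ge1}\dim\q_j^\xi=\dim\q^\xi-\dim\q_0^\xi=\ind\q-\ind\q_0$. Hence $\dim\q_{(\infty)}^\xi=\dim\q_0+\ind\q-\ind\q_0$, which is $\le\ind\q_{(\infty)}$ would be backwards — rather, since $\ind\q_{(\infty)}=\min\dim\q_{(\infty)}^\eta$, this gives the upper bound $\ind\q_{(\infty)}\le\dim\q_0+\ind\q-\ind\q_0$. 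Combining the two inequalities yields equality.

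The step I expect to be the main obstacle is the lower bound: one must show that $\dim\q_{(\infty)}^\gamma\ge\dim\q_0+\ind\q-\ind\q_0$ for \emph{generic} $\gamma\in\q^*$, not merely for $\gamma\in\q_0^*$. The point is that a generic $\gamma$ is not of the special form covered by \eqref{q0-inf}, so the clean decomposition of the stabiliser is unavailable; one needs a semicontinuity or specialization argument relating $\dim\q_{(\infty)}^\gamma$ for generic $\gamma$ to $\dim\q_{(\infty)}^\xi$ for $\xi\in\q_0^*$. The cleanest route is probably to argue that the function $\gamma\mapsto\dim\q_{(\infty)}^\gamma$ is upper semicontinuous, so its \emph{minimum} $\ind\q_{(\infty)}$ is attained on a dense open set, and to observe that the bound $\dim\q_{(\infty)}^\gamma\ge\dim\q_0+\ind\q-\ind\q_0$ must then be checked along a curve degenerating to a good $\xi\in\q_0^*$; since $\q_0\subset\q_{(\infty)}^\gamma$ always and $\ind\q\le\ind\q_{(t)}$ and similar semicontinuity for the $\q_0$-action, the dimension count survives in the limit. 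Alternatively one deduces the lower bound for $\ind\q_{(\infty)}$ formally from general inequalities: $\q_0$ central forces $\ind\q_{(\infty)}\ge\dim\q_0+\ind(\q_{(\infty)}/\q_0)$ and $\q_{(\infty)}/\q_0$ is (as a graded contraction data) governed by the same brackets that give $\ind\q-\ind\q_0$ as a lower bound; making this last identification precise, using that on $\q_0^*$-type functionals the reduced Poisson tensor has rank exactly $\dim\q-\dim\q_0-(\ind\q-\ind\q_0)$, is the technical heart of the argument.
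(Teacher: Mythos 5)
Your upper bound is exactly the paper's: evaluate $\dim\q_{(\infty)}^\xi$ at a point $\xi\in\q_0^*\cap\q^*_{\sf reg}$ using \eqref{q0-inf} and conclude $\ind\q_{(\infty)}\le\dim\q_0+\ind\q-\ind\q_0$. The genuine gap is in the lower bound, which you correctly identify as the heart of the matter but do not actually prove; both routes you sketch fail. First, upper semicontinuity of $\gamma\mapsto\dim\q_{(\infty)}^\gamma$ runs the wrong way: it says the value at the special point $\xi\in\q_0^*$ is at least the generic value, so ``degenerating a curve to a good $\xi$'' can only reproduce the upper bound, never bound the generic stabiliser dimension from below. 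Likewise there is no ``evident generalization'' of \eqref{q0-inf} to points near $\q_0^*$: for generic $\gamma$ the stabiliser $\q_{(\infty)}^\gamma$ does not decompose along the grading. Second, the claimed principle ``$\q_0$ central forces $\ind\q_{(\infty)}\ge\dim\q_0+\ind(\q_{(\infty)}/\q_0)$'' is false in general: for the Heisenberg algebra $\n$ of dimension $2n+1$ with one-dimensional centre $\z$ one has $\ind\n=1$, while $\dim\z+\ind(\n/\z)=1+2n$. (For a central ideal the true general inequality goes in the opposite direction.)

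The ingredient you are missing is the paper's limit-of-stabilisers argument. Take $\xi\in\q^*_{\sf reg}$ with $\bar\xi=\xi|_{\q_0}\in(\q_0^*)_{\sf reg}$ (a nonempty open condition, thanks to the hypothesis $\q_0^*\cap\q^*_{\sf reg}\ne\varnothing$) and set $\gt v=\lim_{t\to\infty}\q_{(t)}^\xi$, the limit in the Grassmannian of the stabilisers for the regular brackets of the pencil; then $\dim\gt v=\ind\q$ and $\gt v\subset\q_{(\infty)}^\xi$. Since $\pi_t(\xi)$ agrees with $\pi_1(\xi)$ on $\q_0\times\q$ for every $t\ne\infty$, one gets $\pi_1(\xi)(\q_0,\gt v)=0$, hence $\gt v\cap\q_0\subset\q_0^{\bar\xi}$ and $\dim(\gt v\cap\q_0)\le\ind\q_0$. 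Therefore $\dim\q_{(\infty)}^\xi\ge\dim(\gt v+\q_0)\ge\ind\q+\dim\q_0-\ind\q_0$ on a nonempty open set, which is the required lower bound. Without this (or some substitute for it), your argument establishes only one of the two inequalities.
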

\begin{proof}
{\bf (1)} Take any $\xi\in \gt q_0^*\cap\gt q^*_{\sf reg}$. Then $\q_{(\infty)}^\xi=\q^\xi+\q_0$.
Furthermore $\q^\xi\cap\q_0 = \q_0^{\xi}$.   As we have explained above,  $\xi\in (\q_0^*)_{\sf reg}$. Thereby 
$\dim\q_0^\xi=\ind\q_0$
 and hence 
$\dim \q_{(\infty)}^\xi=\ind\q+\dim\q_0-\ind\q_0$. This leads to $\ind\q_{(\infty)}\le \ind\q+\dim\q_0-\ind\q_0$. 

{\bf (2)} \ Let us prove the opposite inequality.
Take any $\xi\in\q^*_{\sf reg}$ such that $\bar\xi=\xi|_{\q_0}\in(\q_0^*)_{\sf reg}$. Note that there is a non-empty open subset consisting of suitable elements. For all but finitely many $t$, we have $\dim\q_{(t)}^\xi=\ind\q$. Hence
$\gt v=\lim\limits_{t\to\infty} \q_{(t)}^\xi$ is a well-defined subspace of $\q$ of dimension $\ind\q$.  
If $t\ne \infty$, then $\pi_t(\xi)|_{\q_0{\times}\q}=\pi_1(\xi)|_{\q_0{\times}\q}$
and $\pi_1(\xi)(\gt q_0,\q_{(t)}^\xi)=\pi_t(\xi)(\gt q_0,\q_{(t)}^\xi)=0$. Therefore  
\begin{equation}\label{ort-v}
\pi_1(\xi)(\gt q_0,\gt v)=0 
\end{equation} 
 and $\gt v\cap\q_0\subset\q_0^{\bar\xi}$. 
By the construction, $\gt v\subset\q_{(\infty)}^\xi$. Thus 
\begin{equation} \label{dimv}
\dim\q_{(\infty)}^\xi\ge \dim(\gt v+\q_0) = \dim\gt v+\dim\q_0-\dim(\q_0\cap \gt v)\ge \ind\q+\dim\q_0-\ind\q_0.
\end{equation} 
This finishes the proof, since the inequality holds on a non-empty open subset. 
\end{proof}

The assumption $\gt q_0^*\cap\gt q^*_{\sf reg}\ne\varnothing$ is satisfied in the reductive case, 
since the reductive subalgebra $\g_0=\g^\vartheta$ contains regular semisimple elements 
of $\g$, see  e.g.~\cite[\S 8.8]{kac}.
Thus, we obtain a new proof of \cite[Theorem 3.2]{fo}. 

\begin{cl}[\cite{fo}]  \label{cor:infty=reg}
Suppose that $\q=\g$ is reductive. 
Then  one has $\infty\in \BP_{\sf reg}$ if and only if\/ $\dim\g_0=\rk\g_0$, i.e., $\g_0$ is an abelian
subalgebra of $\g$. 
\end{cl}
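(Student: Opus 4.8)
The plan is to derive this corollary directly from Theorem~\ref{thm:ind-inf}. In the reductive case $\q=\g$, the hypothesis $\g_0^*\cap\g^*_{\sf reg}\neq\varnothing$ holds because $\g_0=\g^\vartheta$ contains regular semisimple elements of $\g$ (via the Killing form identification $\g^*\simeq\g$), as recalled in the paragraph preceding the statement. Hence Theorem~\ref{thm:ind-inf} applies and gives
\[
\ind\g_{(\infty)}=\dim\g_0+\ind\g-\ind\g_0=\dim\g_0+\rk\g-\rk\g_0,
\]
using $\ind\g=\rk\g$ and $\ind\g_0=\rk\g_0$ (both $\g$ and $\g_0$ being reductive).

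Next I would unwind the definition of $\infty\in\BP_{\sf reg}$: by definition this means $\ind\g_{(\infty)}=\ind\g=\rk\g$. Plugging the formula above in, this is equivalent to $\dim\g_0-\rk\g_0=0$, i.e.\ $\dim\g_0=\rk\g_0$. Since for any reductive (indeed any) Lie algebra one always has $\dim\g_0\ge\rk\g_0$, with equality precisely when $\g_0$ is abelian (a reductive Lie algebra equals its rank in dimension iff its semisimple part is trivial), the condition $\dim\g_0=\rk\g_0$ is exactly the statement that $\g_0$ is abelian. This establishes the chain of equivalences ``$\infty\in\BP_{\sf reg}$'' $\iff$ ``$\dim\g_0=\rk\g_0$'' $\iff$ ``$\g_0$ abelian''.

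Strictly speaking there is essentially no obstacle here: the corollary is a bookkeeping consequence of Theorem~\ref{thm:ind-inf} once the reductivity hypotheses are in place. The only point requiring a word of care is the verification that $\g_0^*\cap\g^*_{\sf reg}\neq\varnothing$, for which I would simply cite \cite[\S8.8]{kac} (or the analogous fact that a Cartan subalgebra $\te_0$ of $\g_0$ can be enlarged to a $\vartheta$-stable Cartan subalgebra $\te$ of $\g$, so a generic element of $\te_0$ is regular in $\g$). Everything else is the elementary identity $\dim\rr=\rk\rr \iff \rr$ abelian for reductive $\rr$, applied to $\rr=\g_0$.
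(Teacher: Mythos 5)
Your proposal is correct and follows exactly the route the paper intends: the corollary is stated as an immediate consequence of Theorem~\ref{thm:ind-inf}, using the fact (cited from \cite[\S 8.8]{kac}) that $\g_0^*\cap\g^*_{\sf reg}\ne\varnothing$ in the reductive case, together with $\ind\g=\rk\g$, $\ind\g_0=\rk\g_0$, and the equivalence $\dim\g_0=\rk\g_0\iff\g_0$ abelian. Nothing is missing.
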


Recall that $\gt q_0\subset\cz_\infty$. Thereby $\cz_\infty$ is not Poisson-commutative, unless $\gt q_0$ is commutative. 
However, if $[\gt q_0,\gt q_0]=0$ and $\gt q_0^*\cap\gt q^*_{\sf reg}\ne\varnothing$, then 
$\ind\q_0=\dim\q_0$ and 
$\{\,\,,\,\}_\infty$ is a regular structure in the pencil  spanned by $\{\,\,,\,\}$ and $\{\,\,,\,\}_0$, see Theorem~\ref{thm:ind-inf}. In this case,
$\cz_\infty\subset\gZ(\q,\vth)$. Thereby a description of $\cz_\infty$ is desirable. 

The group $Q_0$ acts on $\gt q_{(\infty)}$. Hence one may consider $\cz_\infty^{\q_0}\subset \cz_\infty$.  

\begin{thm} \label{inf-inv}
Assume that $\q_0^*\cap\q^*_{\sf reg}\ne\varnothing$. \\[.2ex]
{\sf (i)} We have $\{\cz_\infty^{\q_0},\cz_\infty^{\q_0}\}=0$. \\[.2ex]
{\sf (ii)} The algebra $\mathsf{alg}\lg \gZ,\cz_\infty^{\q_0}\rg$ is still Poisson-commutative. \\[.2ex]
{\sf (iii)} We have also $\trdeg\cz_\infty^{\q_0}\le\ind\q$. 
\end{thm}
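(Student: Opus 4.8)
The three statements should be treated together, with (i) as the technical heart and (ii), (iii) as consequences. Throughout, set $\bar\xi=\xi|_{\q_0}$ for $\xi\in\q^*$, and keep in mind the two key facts already established: the decomposition $\q_{(\infty)}^\xi=\q_0\oplus\bigoplus_{j\ge 1}\q_j^\xi$ from~\eqref{q0-inf}, valid for $\xi\in\q_0^*\subset\q^*$, and the index formula $\ind\q_{(\infty)}=\dim\q_0+\ind\q-\ind\q_0$ from Theorem~\ref{thm:ind-inf}. The plan is to analyse $\cz_\infty^{\q_0}$ by restricting differentials to the subspace $\q_0^*$, which meets $\q^*_{\sf reg}$ by hypothesis, so that $\q_{(\infty)}$-regular points abound there.

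\textbf{Step 1: differentials of $\cz_\infty^{\q_0}$ at points of $\q_0^*$.} First I would take $\xi\in\q_0^*$ generic (so that $\xi$ is regular both in $\q^*$ and, via $\bar\xi=\xi$, in $\q_0^*$, and also $\q_{(\infty)}$-regular — possible since such points form a non-empty open subset of $\q_0^*$, using that $\q_0^*\cap\q^*_{\sf reg}$ is open and dense in $\q_0^*$ and the bound in Theorem~\ref{thm:ind-inf} is attained there). For $F\in\cz_\infty$ one has $\textsl{d}_\xi F\in\ker\pi_\infty(\xi)=\q_{(\infty)}^\xi$ by~\eqref{incl}. The extra invariance under $Q_0$ forces $\textsl{d}_\xi F$ to be annihilated by $\ad^*(\q_0)$ acting on $\q^*$ with respect to the \emph{original} bracket: differentiating the $Q_0$-invariance of $F$ along $\q_0$ gives $\ad^*_1(\q_0)\cdot\textsl{d}_\xi F = 0$ as an element of $(\q^*)^*=\q$, i.e. $\textsl{d}_\xi F\in\q^{\xi}$ in the sense that $\pi_1(\xi)(\q_0,\textsl{d}_\xi F)=0$. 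Combining with $\textsl{d}_\xi F\in\q_{(\infty)}^\xi$ and using~\eqref{q0-inf} together with $\q^\xi=\q_0^\xi\oplus\bigoplus_{j\ge1}\q_j^\xi$, I expect to pin down that $\textsl{d}_\xi F$ lies in the subspace $\q_0^\xi\oplus\bigoplus_{j\ge1}\q_j^\xi=\q^\xi$ (the point being that the condition $\pi_1(\xi)(\q_0,\cdot)=0$ cuts $\q_0$ down to $\q_0^\xi=\q_0^{\bar\xi}$ inside $\q_{(\infty)}^\xi$). Hence $\textsl{d}_\xi\cz_\infty^{\q_0}\subseteq\q^\xi$ for generic $\xi\in\q_0^*$, and in particular $\dim\textsl{d}_\xi\cz_\infty^{\q_0}\le\ind\q$.

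\textbf{Step 2: deducing (iii) and (i).} For (iii), since the differentials of $\cz_\infty^{\q_0}$ at a generic point of $\q_0^*$ span a space of dimension $\le\ind\q$, and $\q_0^*$ meets every non-empty open subset of $\q^*$ relevant here — more precisely, since a generic point of $\q_0^*$ is a smooth point of the relevant fibration — the transcendence degree of $\cz_\infty^{\q_0}$ is bounded by the generic rank of its differential, which is $\le\ind\q$. (One must check that $\q_0^*$ is not contained in the locus where all these differentials degenerate; this follows because $\cz_\infty\supseteq\cz_0$-type elements already have independent differentials on $\q_0^*$, or more simply because the generic rank can be computed at the chosen generic $\xi\in\q_0^*$.) For (i): given $F,G\in\cz_\infty^{\q_0}$, the function $\{F,G\}$ vanishes at every generic $\xi\in\q_0^*$ because $\{F,G\}(\xi)=\pi_1(\xi)(\textsl{d}_\xi F,\textsl{d}_\xi G)$ and both differentials lie in $\q^\xi=\ker\pi_1(\xi)$ by Step 1. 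Since $\q_0^*\cap\q^*_{\sf reg}$ is Zariski dense in $\q_0^*$ and the polynomial $\{F,G\}$ vanishes on a dense subset of $\q_0^*$... this is not yet enough, as $\q_0^*$ is a proper subspace of $\q^*$. The genuine argument must instead show $\{F,G\}=0$ identically on $\q^*$; the natural route is to observe that $\{F,G\}\in\cz_\infty^{\q_0}$ again (as $\cz_\infty$ is Poisson-closed and $Q_0$-stable), so if $\{F,G\}\ne0$ we could iterate and build an infinite strictly ascending chain, or more cleanly: $\{\cdot,\cdot\}$ restricted to $\cz_\infty^{\q_0}$ is a Poisson bracket on that algebra whose symplectic leaves through generic $\xi\in\q_0^*$ are points (by the differential computation), forcing the bracket to vanish on the function field and hence on the algebra.

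\textbf{Step 3: the hard part — statement (ii).} This is where I expect the main obstacle. One must show that for $F\in\gZ$ and $G\in\cz_\infty^{\q_0}$ one has $\{F,G\}=0$. The bracket here is ambiguous — but by the Lenard–Magri property $\gZ$ is Poisson-commutative for \emph{all} brackets $\{\,,\,\}_t$, so it suffices to work with, say, $\{\,,\,\}_1=\{\,,\,\}$. At a generic $\xi\in\q_0^*$, $\textsl{d}_\xi G\in\q^\xi$ by Step 1, so $\{F,G\}_1(\xi)=\pi_1(\xi)(\textsl{d}_\xi F,\textsl{d}_\xi G)=0$ regardless of $\textsl{d}_\xi F$, because $\textsl{d}_\xi G\in\ker\pi_1(\xi)$. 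Thus $\{F,G\}_1$ vanishes on the dense subset $\q_0^*\cap\q^*_{\sf reg}$ of $\q_0^*$ — but again this only gives vanishing on a proper subspace. To upgrade: the function $\{F,G\}_1$ lies in $\gS(\q)^{\q_0}$ (since both $F$ and $G$ are $\q_0$-invariant and $\q_0$ acts by Poisson derivations), and moreover it is a derivation-image, so it should lie in the ideal-like structure controlled by $\gZ+\cz_\infty^{\q_0}$; the cleanest finish is probably to note that $\mathsf{alg}\lg\gZ,\cz_\infty^{\q_0}\rg$ is stable under $\{G',\cdot\}$ for $G'\in\cz_\infty^{\q_0}$ and under $\{F',\cdot\}$ for $F'\in\gZ$, reducing (ii) to the vanishing of $\{\gZ,\cz_\infty^{\q_0}\}$, and then to prove that vanishing by a dimension/rank argument: the differentials of $\gZ$ and of $\cz_\infty^{\q_0}$ at generic $\xi\in\q_0^*$ together span an isotropic subspace for $\pi_1(\xi)$, because $\textsl{d}_\xi\gZ$ is isotropic (Poisson-commutativity of $\gZ$), $\textsl{d}_\xi\cz_\infty^{\q_0}\subseteq\q^\xi=\ker\pi_1(\xi)$, and a subspace plus the kernel of a form is isotropic iff the subspace is. Since the generic rank of $\textsl{d}(\mathsf{alg}\lg\gZ,\cz_\infty^{\q_0}\rg)$ can be read off at such $\xi$, and the bracket $\{\,,\,\}_1$ of any two of its elements is again in the algebra with a differential in this isotropic space, an induction on degree (or Rosenlicht's theorem applied to the algebra and its generic symplectic leaf) yields $\{\mathsf{alg}\lg\gZ,\cz_\infty^{\q_0}\rg,\mathsf{alg}\lg\gZ,\cz_\infty^{\q_0}\rg\}=0$. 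The delicate point throughout is passing from vanishing on the subspace $\q_0^*$ to vanishing on all of $\q^*$; I would handle it uniformly by the isotropy-of-differentials criterion rather than by naive density, since $\q_0^*$ is not dense in $\q^*$.
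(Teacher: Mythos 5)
Your proposal contains a genuine gap that you yourself flag but never close: every computation is carried out at a point $\xi$ of the \emph{proper subspace} $\q_0^*\subset\q^*$, and vanishing (or a rank bound) on $\q_0^*$ gives nothing on all of $\q^*$. This breaks all three parts. For {\sf (iii)}, $\trdeg\cz_\infty^{\q_0}$ equals the rank of the differential at a \emph{generic} point of $\q^*$; at special points such as $\xi\in\q_0^*$ the rank can only drop, so the inequality $\dim \textsl{d}_\xi\cz_\infty^{\q_0}\le\ind\q$ for $\xi\in\q_0^*$ is not an upper bound for the transcendence degree. For {\sf (i)} and {\sf (ii)}, the polynomial $\{F,G\}$ vanishing on a dense subset of $\q_0^*$ does not make it zero, and none of the proposed rescues works as stated: iterating the bracket produces no contradiction, ``symplectic leaves through generic $\xi\in\q_0^*$ are points'' is again only information along $\q_0^*$, and the ``isotropy-of-differentials criterion'' is never established at a generic point of $\q^*$, which is exactly what is needed.

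The paper's proof repairs precisely this defect. One takes $\xi\in\q^*_{\sf reg}\cap\q^*_{\infty,\sf reg}$ with $\bar\xi=\xi|_{\q_0}\in(\q_0^*)_{\sf reg}$ --- such $\xi$ form a non-empty open subset of $\q^*$, not of $\q_0^*$ --- and uses the limit subspace $\gt v=\lim_{t\to\infty}\q_{(t)}^\xi$ of dimension $\ind\q$ from the proof of Theorem~\ref{thm:ind-inf}. Regularity of $\xi$ for $\q_{(\infty)}$ forces $\q_{(\infty)}^\xi=\gt v+\q_0$; combining $\textsl{d}_\xi\cz_\infty^{\q_0}\subset\q_{(\infty)}^\xi$ with $\pi_1(\xi)(\textsl{d}_\xi\cz_\infty^{\q_0},\q_0)=0$ and $\gt v\cap\q_0=\q_0^{\bar\xi}$ yields $\textsl{d}_\xi\cz_\infty^{\q_0}\subset\gt v$, which gives {\sf (iii)} immediately. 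The key input you are missing for {\sf (i)} is that $\gt v$ lies in $L(\xi)=\sum_{t}\q_{(t)}^\xi$ (sum over $t$ with $\rk\pi_t(\xi)=\dim\q-\ind\q$), which is isotropic for \emph{every} $\pi_t(\xi)$ by the standard bi-Hamiltonian lemma \cite[Appendix]{codim3}; hence $\{F,G\}(\xi)=0$ at a generic point of $\q^*$ and so $\{F,G\}=0$. Finally, {\sf (ii)} is handled by a much simpler pencil argument than anything you attempt: if $\infty\in\BP_{\sf reg}$ then $\cz_\infty\subset\gZ$ and there is nothing to prove; otherwise $\gZ$ is generated by the $\cz_t$ with $t\ne\infty$, and for each such $t$ the brackets $\{\,\,,\,\}_\infty$ and $\{\,\,,\,\}_t$ span the whole pencil while $\{\cz_\infty,\cz_t\}_\infty=\{\cz_\infty,\cz_t\}_t=0$, whence $\{\cz_\infty,\cz_t\}=0$ for the original bracket. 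You should restructure your argument around a generic point of $\q^*$ and the isotropy of $L(\xi)$; the restriction-to-$\q_0^*$ computation cannot be salvaged.
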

\begin{proof}
Take $\xi\in\q^*_{\sf reg}\cap \q^*_{\infty,\sf reg}$ such that  $\bar\xi=\xi|_{\q_0}\in(\q_0^*)_{\sf reg}$.
Note that there is a non-empty open subset consisting of suitable elements. 
Following the proof of Theorem~\ref{thm:ind-inf},
set $\gt v=\lim\limits_{t\to\infty} \q_{(t)}^\xi$. Recall that 
$\gt v+\gt q_0\subset \gt q_{(\infty)}^\xi$ and that $\dim(\gt v+\gt q_0)\ge \ind\q_{(\infty)}$, see~\eqref{dimv}. Since 
$\xi\in\q^*_{\infty,\sf reg}$, there is the equality $\gt q_{(\infty)}^\xi =\gt v+\gt q_0$. 

Next $\textsl{d}_\xi  \cz_\infty^{\q_0} \subset \gt q_{(\infty)}^\xi$ by \eqref{incl} and
$\pi_1(\xi)(\textsl{d}_\xi  \cz_\infty^{\q_0},\q_0)=0$, 
since $\cz_\infty^{\q_0}$ consists of $\q_0$-invariants.  In the proof of Theorem~\ref{thm:ind-inf}, we have established that 
$\pi_1(\xi)(\gt v,\q_0)=0$, see \eqref{ort-v}. Suppose that $y\in\q_0$ and $\pi_1(\xi)(y,\q_0)=0$. Then 
$y\in\q_0^{\bar\xi}$. In particular, $\textsl{d}_\xi  \cz_\infty^{\q_0}\subset \gt v+\q_0^{\bar\xi}$ and $\gt v\cap\q_0\subset\q_0^{\bar\xi}$. By the dimension reasons,
$\gt v\cap\q_0=\q_0^{\bar\xi}$. Thus   $\textsl{d}_\xi  \cz_\infty^{\q_0} \subset\gt v+\q_0^{\bar\xi}\subset\gt v$. 
The inclusion proves the inequality $\trdeg\cz_\infty^{\q_0}\le\dim\gt v=\ind\q$. 

For almost all $t\in\BP$, we have $\dim\q_{(t)}^\xi=\ind\q$. Hence
$\gt v$ is a subspace of 
$$
L(\xi):=\sum\limits_{t:\,\rk\pi_t(\xi)=\dim\q-\ind\q} \q_{(t)}^\xi
$$ and the latter is known to be isotropic  w.r.t. $\pi_t(\xi)$ for  any $t$,
 see e.g. \cite[Appendix]{codim3}. 
Then, in particular, any $F\in\{\cz_\infty^{\q_0},\cz_\infty^{\q_0}\}$ vanishes at $\xi$, and,
since $\xi$ is generic, the first   claim is  settled.  

If $\infty\in \BP_{\sf reg}$, then $\cz_\infty^{\q_0}\subset \cz_\infty\subset\gZ$ and part {\sf (ii)} is clear. 
Suppose that $\infty\in \BP_{\sf sing}$. Then $\gZ\subset\mathsf{alg}\langle\cz_t \mid t\ne\infty\rangle$.
For any $t\ne \infty$, the brackets  $\{\,\,,\,\}_\infty$ and $\{\,\,,\,\}_t$ span 
$$\mathcal P=\{ a\{\,\,,\,\}_0+b\{\,\,,\,\}_\infty\mid (a,b)\in\bbk^2\}
$$
 and
$\{\cz_\infty,\cz_t\}_\infty = \{\cz_\infty,\cz_t\}_t=0$. Thereby $\{\cz_\infty,\cz_t\}=0$ for each $t\ne\infty$. 
This finishes the proof. 
\end{proof}

\section{The reductive case} \label{red}

In most of this section, we recollect known results about automorphisms of reductive Lie algebras and properties of 
$\gZ(\g,\vth)$. 
Statements of Section~\ref{folding} are crucial for the  proof of Theorem~\ref{inf-c2}. 
Theorem~\ref{ind-au} on the index of $\g_{(0)}$ is a new result.

\subsection{The Kac diagram of a finite order automorphism} \label{Kac}
 We describe briefly Kac's classification of 
finite order automorphisms  in the semisimple case \cite{k69}. 

A pair $(\g,\vth)$ is {\it decomposable}, if $\g$ is a direct sum of two non-trivial
$\vth$-stable ideals. Otherwise  $(\g,\vth)$ is said to be {\it indecomposable}. Classification of finite order automorphism 
readily reduces to the indecomposable case. The centre of $\g$ is always a $\vth$-stable ideal and automorphisms of an abelian Lie algebra have no particular significance (in our context). Therefore assume that $\g$ is semisimple. 

Suppose that $\g$ is not simple and $(\g,\vth)$ is  indecomposable. Then $\g=\h^{\oplus n}$ is a sum of $n$ copies of a simple Lie algebra $\h$ and $\vth$ is a composition of an automorphism of $\h$ and a cyclic permutation of the 
summands. 

Below we assume that $\g$ is simple. By a result of R.\,Steinberg~\cite[Theorem\,7.5]{st},
every semisimple automorphism of $\g$ fixes a Borel subalgebra of $\g$ and a Cartan 
subalgebra thereof. Let $\be$ be a $\vth$-stable Borel subalgebra and $\te\subset\be$ a 
$\vth$-stable Cartan subalgebra. This yields a $\vth$-stable triangular decomposition 
$\g=\ut^-\oplus\te\oplus\ut$, where $\ut=[\be,\be]$. Let $\Delta=\Delta(\g)$ be the set of roots of $\g$ related to $\te$, 
$\Delta^+$ the set of positive roots corresponding to $\ut$, and $\Pi\subset\Delta^+$ the set of simple 
roots. Let $\g_\gamma$ be the root space for $\gamma\in\Delta$. Hence $\ut=\bigoplus_{\gamma\in\Delta^+}\g_\gamma$.
Let $e_\gamma\in\g_\gamma$ be a non-zero root vector. 

Clearly, $\vth$ induces a permutation of $\Pi$, which is an automorphism of the Dynkin diagram,
and $\vth$ is inner if and only if this permutation is trivial. Accordingly, $\vth$ can be written as a product 
$\sigma{\circ}\vth'$, where $\vth'$ is inner and $\sigma$ is the so-called {\it diagram automorphism\/} of 
$\g$. We refer to \cite[\S\,8.2]{kac} for an explicit construction and properties of $\sigma$. In particular, 
$\sigma$ depends only on the connected component of ${\sf Aut}(\g)$ that contains $\vth$ and $\Ord$ 
equals the order of the corresponding permutation of $\Pi$. 

--- The case of an inner $\vth$:\\[.2ex]
Set $\Pi=\{\ap_1,\ldots,\ap_l\}$ and let $\delta=\sum_{i=1}^l n_i \ap_i$ be the highest root in $\Delta^+$. 
An inner periodic automorphism with $\te\subset\g_0$ is determined by an $(l+1)$-tuple of non-negative 
integers, {\it Kac labels},  $\boldsymbol{p}=(p_0,p_1,\ldots,p_l)$ such that $\gcd(p_0,\ldots,p_l)=1$ and 
$\boldsymbol{p}\ne (0,\dots,0)$.
Set $m:=p_0+\sum_{i=1}^{l} n_i p_i$ and let $\ov{p_i}$ denote the unique representative of
$\{0,1,\dots,m-1\}$ such that $p_i \equiv \ov{p_i} \pmod m$. The $\BZ_m$-grading 
$\g=\bigoplus_{i=0}^{m-1}\g_i$ corresponding to $\vth=\vth(\boldsymbol{p})$ is defined 
by the conditions that
\[
   \g_{\ap_i}\subset\g_{\ov{p_i}} \ \text{ for $i=1,\dots,l$}, 
   \enskip \g_{-\delta}\subset\g_{\ov{p_0}},  \text{ and } \ \te\subset\g_0.
\]
The {\it Kac diagram\/} $\eus K(\vth)$ of $\vth=\vth(\boldsymbol{p})$ is the {\bf affine} (=\,extended) 
Dynkin diagram of $\g$, $\tilde{\eus D}(\g)$, equipped with the labels $p_0,p_1,\dots,p_l$. In 
$\eus K(\vth)$, the $i$-th node of the usual Dynkin diagram ${\eus D}(\g)$ represents $\ap_i$ and the 
extra node represents $-\delta$. It is convenient to assume that $\ap_0=-\delta$ and $n_0=1$. Then 
$(l+1)$-tuple $(n_0,n_1,\dots,n_l)$ yields the coefficients of a linear dependence for $\ap_0,\ap_1,\dots,\ap_l$.

--- The case of an outer $\vth$:\\[.2ex]
Let $\sigma$ be the diagram automorphism of $\g$ related to $\vth$.
The order of a nontrivial 
diagram automorphism is either $2$ or $3$, there $3$ is possible only for $\gt g$ of type {\sf D}$_4$. 
Therefore, $\sigma$ defines either  a $\BZ_2$- or $\BZ_3$-grading of $\g$, say $\gt g=\gt g^\sigma\oplus\gt g_1^{(\sigma)}$ or 
$\gt g=\gt g^\sigma\oplus\gt g_1^{(\sigma)}\oplus\gt g_2^{(\sigma)}$. 

The Kac diagrams of   outer periodic automorphisms are supported on the twisted affine Dynkin diagrams
of index 2 and 3, see \cite[\S\,8]{vi76} and~\cite[Table\,3]{t41}. Such a diagram has $r+1$ nodes, where 
$r=\rk\g^\sigma$, certain $r$ nodes comprise the Dynkin diagram of the simple Lie algebra $\g^{\sigma}$, 
and the additional node represents the lowest weight $-\delta_1$ of the $\g^\sigma$-module 
$\g_1^{(\sigma)}$. Write $\delta_1=\sum_{i=1}^r a_i' \nu_i$, where the elements $\nu_i$ are the simple roots of 
$\g^\sigma$, and set $a'_0=1$. Then the $(r+1)$-tuple 
$(a'_0,a'_1,\dots,a'_r)$  yields coefficients of linear dependence for $-\delta_1,\nu_1,\dots,\nu_r$.

Let $\boldsymbol{p}=(p_0, p_1,\dots, p_r)$ be an $(r+1)$-tuple of non-negative integers, {\it Kac labels}, such that 
$\boldsymbol{p}\ne (0,0,\dots,0)$ and $\gcd(p_0, p_1,\dots, p_r)=1$. The Kac diagram of 
$\vth=\vth(\boldsymbol{p})$ is the required twisted affine diagram equipped with the labels 
$(p_0, p_1,\dots, p_r)$ over the nodes. Then $m=\mathsf{ord}(\vth(\boldsymbol{p}))=\Ord{\cdot} \sum_{i=0}^r a_i' p_i$. 

\subsection{Relation between roots of $\g$ and $\g^\sigma$.} \label{folding}
Let $\sigma$ be a diagram automorphism of $\gt g$ associated with $\gt b$ and $\te\subset\gt b$.
Then $\sigma(\gt b)=\gt b$ and $\sigma(\te)=\te$ by the construction. Let 
$\Delta^+_{\g^\sigma}$ be the set of positive roots of $\g^\sigma$ associated with $(\gt b^\sigma,\te_0)$, where $\te_0=\te^\sigma$.

Take any $\alpha\in\Delta^+$. If $\sigma(\alpha)\ne\alpha$, then the restriction $\bar\alpha=\alpha|_{\te_0}$ is a positive root of $\g^{\sigma}$. In any case, 
$\bar\alpha=\sigma(\alpha)|_{\te_0}$. 
Suppose $\beta\in \Delta_{\g^\sigma}$. Then there is a non-zero root vector $x_\beta\in\g^\sigma$. We can write 
$x_\beta=\sum_{\alpha\in\Delta} b_\alpha e_\alpha$ with $b_\alpha\in\bbk$. Then 
$\alpha|_{\te_0}=\beta$, whenever $b_\alpha\ne 0$. In particular,  $\Delta_{\g^\sigma}$ is contained in 
$\Delta|_{\te_0}$.

Let $\left<\sigma\right>\subset\Aut(\g)$ be a subgroup generated by $\sigma$. In all types except 
${\sf A}_{2n}$, 
the restriction of roots from $\te$ to $\te_0$ produces 
bijections between $\left<\sigma\right>$-orbits on $\Delta^+$ and 
$\Delta^+_{\g^\sigma}$. If $\g=\gt{sl}_{2n+1}$, then the situation is slightly different. 
These are well-known facts, nevertheless we give a brieft explanation below. Set 
$\gt s_\alpha=\g_\alpha\oplus\g_{-\alpha}\oplus[\g_\alpha,\g_{-\alpha}]$.

\begin{lm}[{cf. \cite[\S\,8.2]{kac}}] \label{bij-D4}
Suppose   $\Ord=3$. Then there is a bijection between $\left<\sigma\right>$-orbits on $\Delta^+$ and 
$\Delta^+_{\g^\sigma}$.
\end{lm}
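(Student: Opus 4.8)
The plan is to spell out the $\Ord = 3$ case, which only occurs for $\g$ of type ${\sf D}_4$ with $\g^\sigma$ of type ${\sf G}_2$. The rank drops from $4$ to $2$, and $|\Delta^+| = 12$ while $|\Delta^+_{\g^\sigma}| = 6$, so a bijection between $\langle\sigma\rangle$-orbits on $\Delta^+$ and $\Delta^+_{\g^\sigma}$ is at least numerically plausible: the $12$ positive roots of ${\sf D}_4$ must split into $6$ orbits under the order-$3$ cyclic group $\langle\sigma\rangle$, hence into orbits of size $1$ or $3$, and $6$ orbits with total size $12$ forces exactly $3$ singletons and $3$ triples (since $3a + b = 12$, $a+b = 6$ gives $a = 3$, $b = 3$). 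The three short roots of ${\sf G}_2$ should correspond to the three size-$3$ orbits (the outer nodes of the ${\sf D}_4$ diagram, together with the roots built from them), and the three long roots of ${\sf G}_2$ to the three $\sigma$-fixed positive roots of ${\sf D}_4$.

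First I would fix the standard realization: label the simple roots of ${\sf D}_4$ as $\alpha_1,\alpha_2,\alpha_3,\alpha_4$ with $\alpha_2$ central, so $\sigma$ cyclically permutes $\alpha_1 \to \alpha_3 \to \alpha_4 \to \alpha_1$ and fixes $\alpha_2$. I would then list all $12$ positive roots explicitly and compute the $\sigma$-action on each, verifying directly that the $\sigma$-fixed positive roots are exactly $\alpha_2$, $\alpha_1+\alpha_2+\alpha_3+\alpha_4$, and the highest root $\delta = \alpha_1+2\alpha_2+\alpha_3+\alpha_4$ (three of them, as the count predicts), and that the remaining nine fall into three $\langle\sigma\rangle$-orbits of size $3$. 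Since the excerpt already records that restriction to $\te_0$ sends $\langle\sigma\rangle$-orbits on $\Delta^+$ into $\Delta^+_{\g^\sigma}$ and that $\Delta_{\g^\sigma} \subseteq \Delta|_{\te_0}$, it remains to check that this restriction map on orbits is injective and surjective onto $\Delta^+_{\g^\sigma}$. Surjectivity: the six restricted roots are distinct elements of $\Delta^+_{\g^\sigma}$ (a set of size $6$), so equality of cardinalities forces the map onto. Injectivity then follows for free, or can be seen directly: two distinct orbits restrict to distinct functionals on $\te_0$ because an orbit is determined by its restriction together with the information of whether it is a singleton or a triple — and the three singletons restrict to the long roots, the three triples to the short roots, which are disjoint subsets of $\Delta^+_{\g^\sigma}$.

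An alternative, more uniform route avoids case-by-case computation: one knows abstractly that $\g^\sigma$ of type ${\sf G}_2$ acts on $\g$, that $\g = \g^\sigma \oplus \g_1^{(\sigma)} \oplus \g_2^{(\sigma)}$ as a $\g^\sigma$-module, and that $\g_1^{(\sigma)}$, $\g_2^{(\sigma)}$ are each isomorphic to the $7$-dimensional representation of ${\sf G}_2$ (the three $8$-dimensional pieces of ${\sf D}_4$ decompose as $7+1$). The $\te_0$-weights of a single copy of the $7$-dimensional module are the six short roots together with $0$; tracking which $\sigma$-orbit in $\Delta^+$ maps to which weight is then a weight-multiplicity bookkeeping exercise. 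I would probably present the explicit list, since for a single rank-$2$ example it is short and self-contained, and relegate the representation-theoretic remark to a parenthetical.

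**The main obstacle** is essentially notational rather than mathematical: one must be careful that the claimed bijection is between $\langle\sigma\rangle$-\emph{orbits} (not individual roots) on $\Delta^+$ and \emph{all} of $\Delta^+_{\g^\sigma}$, and in particular that the $\sigma$-fixed roots of $\g$ do land among the (long) roots of $\g^\sigma$ rather than among, say, non-reduced restrictions — which is exactly the phenomenon that fails in type ${\sf A}_{2n}$ and is why that case is excluded. Checking that no restricted root equals twice another (the ${\sf A}_{2n}$ pathology) is automatic here because ${\sf G}_2$ is reduced, but it is worth one sentence. Beyond that, the proof is a finite verification.
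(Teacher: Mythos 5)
Your proposal is correct and follows essentially the same route as the paper: the paper's proof also reduces to type ${\sf D}_4$, lists the six $\left<\sigma\right>$-orbits on $\Delta^+$ explicitly (three triples and the three $\sigma$-fixed roots $\alpha_2$, $\alpha_1+\alpha_2+\alpha_3+\alpha_4$, $\alpha_1+2\alpha_2+\alpha_3+\alpha_4$), and compares with the six positive roots of ${\sf G}_2$. If anything, your write-up is more careful than the paper's, which stops at the cardinality count without spelling out injectivity of the restriction map.
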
 
\begin{proof} Here $\gt g$ is of type {\sf D}$_4$. 
There are six $\left<\sigma\right>$-orbits on $\Delta^+$. Three of these orbits have three elements,
namely 
$$
\{\alpha_1,\alpha_3,\alpha_4\}, \  \{\alpha_1+\alpha_2,\alpha_2+\alpha_3,\alpha_2+\alpha_4\}, \
 \{\alpha_1+\alpha_2+\alpha_3,\alpha_1+\alpha_2+\alpha_4,\alpha_2+\alpha_3+\alpha_4\}.
 $$
The other three consist of fixed points: $\{\alpha_2\}$, $\{\alpha_1+\alpha_2+\alpha_3+\alpha_4\}$, 
$\{\alpha_1+2\alpha_2+\alpha_3+\alpha_4\}$.
 The Lie algebra $\g^\sigma$ is of type ${\sf G}_2$ and it has 
$6$ positive roots. Therefore there is nothing more to show.   
\end{proof}

\begin{lm} \label{index2-restr}
Suppose $\Ord=2$. 
Let $x_\mu=\sum_{\alpha\in\Delta} b_\alpha e_\alpha\in (\gt b\cap\gt g^\sigma)$ with $b_\alpha\in\bbk$ be a non-zero root vector of $\g^\sigma$.
Fix one $\alpha\in\Delta^+$ such that  $b_\alpha\ne 0$.  
Suppose further  that $\bar\beta=\beta|_{\te_0}\in \bbk\bar\alpha$ for some 
$\beta\in\Delta^+$ that does not belong to the $\left<\sigma\right>$-orbit of $\alpha$.
Then $\beta=\alpha+\sigma(\alpha)$. 
\end{lm}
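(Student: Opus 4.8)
The plan is to dispose of all types other than ${\sf A}_{2n}$ at once, using the bijectivity of the restriction map on $\langle\sigma\rangle$-orbits, and then to verify the claim by a direct inspection of the standard model of $\gt{sl}_{2n+1}$. First note that, as $\alpha$ occurs in the support of $x_\mu\in\g^\sigma$, we have $\bar\alpha=\mu\in\Delta^+_{\g^\sigma}$. Since $\sigma$ fixes $\gt b$, also $\sigma(\beta)\in\Delta^+$, and hence $\bar\beta=\tfrac12(\beta+\sigma(\beta))$ is non-zero (otherwise $\sigma(\beta)=-\beta\notin\Delta^+$) and lies in the same open half-space as $\bar\alpha$; so $\bar\beta=\lambda\bar\alpha$ with $\lambda>0$. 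Recall also that $\gamma\mapsto\bar\gamma$ is constant on each $\langle\sigma\rangle$-orbit in $\Delta^+$.

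Suppose first that $\g$ is not of type ${\sf A}_{2n}$. By the fact recalled before the lemma, restriction induces a bijection between the $\langle\sigma\rangle$-orbits on $\Delta^+$ and $\Delta^+_{\g^\sigma}$; in particular $\bar\beta\in\Delta^+_{\g^\sigma}$. Since $\Delta_{\g^\sigma}$ is a reduced root system, it contains no positive multiple of a root other than that root itself, so $\bar\beta=\lambda\bar\alpha$ with $\lambda>0$ and $\bar\alpha,\bar\beta\in\Delta_{\g^\sigma}$ forces $\lambda=1$, i.e.\ $\bar\beta=\bar\alpha$. Injectivity of the bijection then places $\beta$ in the $\langle\sigma\rangle$-orbit of $\alpha$, against the hypothesis. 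Thus in these types no $\beta$ as in the statement exists, and the assertion holds vacuously. Since $\Ord=2$, the only remaining possibility is $\g=\gt{sl}_{2n+1}$.

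Work in the standard model: $\te$ the diagonal torus, $\epsilon_1,\dots,\epsilon_{2n+1}$ the coordinate characters, and $\sigma$ normalised so that $\sigma(\epsilon_i)=-\epsilon_{2n+2-i}$. Put $u_a:=\epsilon_a|_{\te_0}$ for $1\le a\le n$; then $\epsilon_{n+1}|_{\te_0}=0$, $\epsilon_{2n+2-a}|_{\te_0}=-u_a$, the root system $\Delta_{\g^\sigma}$ is of type ${\sf B}_n$ with short roots $\pm u_a$ and long roots $\pm u_a\pm u_b$, and each $2u_a$ is a restricted root (the restriction of the $\sigma$-fixed root $\epsilon_a-\epsilon_{2n+2-a}$) that is \emph{not} a root of $\g^\sigma$. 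Because $\bar\alpha=\mu$ is a root of $\g^\sigma$, it is not twice a restricted root, so the only positive multiples of $\bar\alpha$ that are restricted roots are $\bar\alpha$ itself and, when $\bar\alpha$ is short, $2\bar\alpha$; hence $\lambda\in\{1,2\}$. The value $\lambda=1$ is ruled out just as before, since the preimage in $\Delta^+$ of a root of $\g^\sigma$ is a single $\langle\sigma\rangle$-orbit, namely that of $\alpha$. Therefore $\bar\beta=2\bar\alpha$, which forces $\bar\alpha=u_a$ to be short for some $a$. A short calculation now identifies the relevant preimages: the positive roots restricting to $u_a$ are precisely $\epsilon_a-\epsilon_{n+1}$ and $\epsilon_{n+1}-\epsilon_{2n+2-a}$ (these form the $\langle\sigma\rangle$-orbit of $\alpha$, and $\sigma(\alpha)\ne\alpha$ since a $\sigma$-fixed root would restrict to some $2u_b$), whereas the only positive root restricting to $2u_a=\bar\beta$ is $\epsilon_a-\epsilon_{2n+2-a}$. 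Consequently $\beta=\epsilon_a-\epsilon_{2n+2-a}=(\epsilon_a-\epsilon_{n+1})+(\epsilon_{n+1}-\epsilon_{2n+2-a})=\alpha+\sigma(\alpha)$, as required.

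The only genuine computation is this last step in the $\gt{sl}_{2n+1}$-model --- determining the preimages of the short root $u_a$ and of the extra-long restricted root $2u_a$, and recognising the latter as the single $\sigma$-fixed root $\alpha+\sigma(\alpha)$. This is the point I expect to require most care; everything else is the $\langle\sigma\rangle$-orbit/restricted-root dictionary recorded before the lemma, together with the elementary remark about reduced root systems used above.
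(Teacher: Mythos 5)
Your proof is correct, but it takes a genuinely different route from the paper's. You argue type by type: for $\Ord=2$ and $\g$ not of type ${\sf A}_{2n}$ you invoke the bijection between $\left<\sigma\right>$-orbits on $\Delta^+$ and $\Delta^+_{\g^\sigma}$ together with the reducedness of $\Delta_{\g^\sigma}$ to show that no $\beta$ as in the hypothesis exists, and for $\gt{sl}_{2n+1}$ you compute directly in the restricted root system of type ${\sf BC}_n$; that computation is accurate (the unique positive root restricting to $2u_a$ is $\epsilon_a-\epsilon_{2n+2-a}=\alpha+\sigma(\alpha)$, and the preimage of each root of $\g^\sigma$ is indeed a single orbit). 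The paper's proof is instead uniform in the type: it chooses a subregular $h\in\te_0$ with $\bar\alpha(h)=0$, compares $(\g^h)^\sigma=(\g^\sigma)^h=\gt{sl}_2+\te_0$ with the copies of $\gt{sl}_2$ produced by the orbits of $\alpha$ and $\beta$ to force $\sigma(\beta)=\beta$ and $\sigma|_{\g_\beta}=-\id$, and then identifies $\gt f=[\g^h,\g^h]$ as an $\gt{sl}_3$ on which $\sigma$ acts with fixed points $\gt{so}_3$, whence $\beta=\alpha+\sigma(\alpha)$. The one point you must address: this lemma is offered in the paper precisely as part of the promised ``brief explanation'' of the orbit--restricted-root bijection, so taking that bijection as an input makes your argument circular relative to the paper's internal logic. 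It is not circular in an absolute sense --- the bijection for the types other than ${\sf A}_{2n}$ is classical and can be cited from the literature (e.g.\ \cite[Ch.\,8]{kac}) or re-derived by the same kind of coordinate inspection you carry out for ${\sf A}_{2n}$ --- but you should make that external dependence explicit. What the paper's argument buys is independence from the classification of restricted root systems; what yours buys is a concrete, elementary identification of the exceptional configuration inside $\gt{sl}_{2n+1}$.
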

\begin{proof}
Since $\sigma(x_\mu)=x_\mu$, we have 
$(\gt s_{\alpha}+\sigma(\gt s_\alpha))^\sigma\simeq\gt{sl}_2$, and if 
 $\sigma(\alpha)=\alpha$, then 
$\sigma|_{\g_\alpha}=\id$.  

Let $h\in\te_0$ be a subregular element of $\g^\sigma$ 
such that $\bar\alpha(h)=0$ and $\gamma(h)\ne 0$, whenever $\gamma\in\Delta$ and 
$\gamma|_{\te_0}\not\in\bbk\bar\alpha$. 
Consider $\gt g^h$. On the one hand, we have 
$(\g^h)^{\sigma}=(\g^\sigma)^h =\gt{sl}_2+\te_0$. On the other hand, if $\sigma(\beta)\ne\beta$, then 
\[
(\g^h)^{\sigma}\supset
(\gt s_\alpha+\gt s_{\sigma(\alpha)})^\sigma \oplus (\gt s_\beta+\gt s_{\sigma(\beta)})^\sigma \simeq  \gt{sl}_2\oplus\gt{sl}_2\,,
\]
which is a contradiction. This shows that  $\sigma(\beta)=\beta$.

By the same reason as above,  $\sigma|_{\g_\beta}\ne \id$.
It follows that $\sigma|_{\g_\beta}=\sigma|_{\g_{-\beta}} =-\id$,
since $\Ord=2$. Then $[\g_\beta,\g_{-\beta}] \subset\te_0$. Let
$h_\beta\in[\g_\beta,\g_{-\beta}]$ be such that  $\beta(h_\beta)=2$.
Since $h_\beta\in \te_0$ and $\bar\beta\in\bbk\bar\alpha$, we have $\alpha(h_\beta)\ne 0$.  
In view of this, $[\gt s_\beta,\gt s_\alpha]\ne 0$. 
 
Set $\gt f=[\g^h,\g^h]$. 
Then $\gt s_\alpha+\gt s_\beta\subset\gt f$. Furthermore, if $\gamma\in\Delta$ is a root  of $\gt f$,
then $\bar\gamma=\gamma|_{\te_0}$ belongs to 
$\bbk\bar\alpha$. Hence, 
$\rk\gt f\ge 2$ and $\gt f$ is simple. 
By the construction, 
$\gt f^\sigma=\gt{sl}_2+\tilde\te$, where $\tilde\te\subset\te_0$ is a maximal torus of  
$\gt f^\sigma$. 
The involution $\sigma$ induces an automorphism of $\gt f$ of order $2$. It cannot be inner, because 
the restrictions of $\alpha$ and $\beta$ to $\tilde\te$ coincide. From the description  of outer involutions, we deduce that 
$\rk\gt f\le 2$. Therefore 
the only possibility for $(\gt f,\gt f^\sigma)$ is the pair $(\gt{sl}_3,\gt{so}_3)$. 
Here   $\beta=\alpha_1+\alpha_2$ with $\alpha_1,\alpha_2$ being  simple roots of $\gt{sl}_3$ and 
$\sigma(\alpha_1)=\alpha_2$.  
Since $e_\beta=[e_{\alpha_1},e_{\alpha_2}]$ in $\gt{sl}_3$, up to a suitable normalisation, we have also 
$e_\beta=[e_{\alpha},e_{\sigma(\alpha)}]$ in $\gt g$ and $\beta=\alpha+\sigma(\alpha)$ in $\Delta$.
\end{proof}


\subsection{Properties and generators of algebras $\gZ(\g,\vth)$}
\label{sect:3}

\noindent
From now on, $G$ is a connected semisimple algebraic group and $\g=\Lie G$.
We consider $\vth\in\Aut(\g)$ of order $m\ge 2$ and 
freely use the previous notation and results, with $\q$ being replaced by $\g$. In particular,
\[
  \g=\g_0\oplus\g_1\oplus\ldots\oplus \g_{m-1} ,
\] 
where $\{0,1,{\dots},m-1\}$ is the fixed set of representatives for $\BZ_m$, and $G_0$ is the connected subgroup of $G$ with $\Lie G_0=\g_0$. Then $\g_{(t)}$ is a family of Lie algebras parameterised by 
$t\in \BP=\bbk\cup\{\infty\}$, where the algebras $\g_{(t)}$ with $t\in\bbk^\star$ 
are isomorphic to $\g=\g_{(1)}$, while $\g_{(0)}$ and $\g_{(\infty)}$ are different $\BN_0$-graded 
contractions of $\g$. 

 Note that $\g_0$ is
a reductive Lie algebra. Let $\kappa$ be the Killing form on $\g$. We identify $\g$ and $\g_0$ with 
their duals via $\kappa$. Moreover, since $\kappa(\g_i,\g_{j})=0$ if $i+j\not\in\{0,m\}$, the dual space of 
$\g_j$, $\g_j^*$, can be identified with $\g_{m-j}$. We identify also $\te$ with $\te^*$ and $\te_0$ with $\te_0^*$. 
Set $\gZ=\gZ(\g,\vth)$.

\begin{thm}[\cite{fo}]
Suppose that $\ind\g_{(0)}=\rk\g$. Then 
$\trdeg\gZ=\bb(\g,\vth):=\bb(\g)-\bb(\g_0)+\rk\g_0$. \qed
\end{thm}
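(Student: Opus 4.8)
We establish the two inequalities $\trdeg\gZ\le\bb(\g,\vth)$ and $\trdeg\gZ\ge\bb(\g,\vth)$ separately; the second is the substantial point.

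\emph{Upper bound.} If $\g_0$ is not abelian, then $\infty\notin\BP_{\sf reg}$ by Corollary~\ref{cor:infty=reg}, so $\gZ=\mathsf{alg}\langle\cz_t\mid t\in\BP_{\sf reg},\ t\ne\infty\rangle\subseteq\gS(\g)^{\g_0}$ by~\eqref{inclz}. Since $\gZ$ is Poisson-commutative for the initial bracket $\{\,\,,\,\}=\{\,\,,\,\}_1$ and $\ind\g_0=\rk\g_0$, the estimate $\trdeg\ca\le\bb(\q)-\bb(\h)+\ind\h$ applied with $\q=\g$, $\h=\g_0$, $\ca=\gZ$ gives $\trdeg\gZ\le\bb(\g)-\bb(\g_0)+\rk\g_0=\bb(\g,\vth)$. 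If $\g_0$ is abelian, then $\bb(\g_0)=\rk\g_0$, so $\bb(\g,\vth)=\bb(\g)$, and $\trdeg\gZ\le\bb(\g)$ by the standard bound for Poisson-commutative subalgebras. In both cases $\trdeg\gZ\le\bb(\g,\vth)$.

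\emph{Reduction of the lower bound to a point estimate.} Fix a generic $\xi\in\g^*$: namely one with $\dim\g_{(t)}^\xi=\rk\g$ for all $t\in\bbk^\star$ (possible since $\g^*_{\sf sing}$ has codimension $\ge 3$, hence $\bigcup_{s\in\bbk^\star}\vp_s^{-1}(\g^*_{\sf sing})$ is not dense), with $\dim\g_{(0)}^\xi=\rk\g$ (using $\ind\g_{(0)}=\rk\g$), $\xi\in(\g_{(\infty)}^*)_{\sf reg}$, and $\bar\xi:=\xi|_{\g_0}\in(\g_0^*)_{\sf reg}$ -- all of these hold on a dense open subset. Since $\gZ$ is generated by the $\cz_t$, the Leibniz rule gives $\textsl{d}_\xi\gZ=\sum_{t\in\BP_{\sf reg}}\textsl{d}_\xi\cz_t$, whence $\trdeg\gZ=\dim\textsl{d}_\xi\gZ$. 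For $t=s^m\in\bbk^\star$ the ring $\cz_t=\vp_s^{-1}(\gS(\g)^\g)$ is isomorphic to $\gS(\g)^\g$ and $\vp_s^{-1}\xi$ is regular in $\g^*$, so Kostant's regularity criterion gives $\textsl{d}_\xi\cz_t=\g_{(t)}^\xi$; for $t\in\{0,\infty\}$ one has $\textsl{d}_\xi\cz_t\subseteq\g_{(t)}^\xi$ by~\eqref{incl}, and $\g_{(t)}^\xi=\lim_{s\to t}\g_{(s)}^\xi$ already lies in $\sum_{s\in\bbk^\star}\g_{(s)}^\xi$ (a limit of $\rk\g$-planes inside a fixed subspace), so these values contribute nothing new. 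Thus $\trdeg\gZ=\dim L$, where $L:=\sum_{t\in\BP_{\sf reg}}\g_{(t)}^\xi=\sum_{t\in\bbk^\star}\g_{(t)}^\xi$ is the subspace $L(\xi)$ from the proof of Theorem~\ref{inf-inv}.

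\emph{The point estimate, and the main obstacle.} By the properties of the linear pencil $\{\pi_t(\xi)\}_{t\in\BP}$ of $2$-forms on $\g$ recalled in \cite[Appendix]{codim3}, $L$ is isotropic with respect to every $\pi_s(\xi)$, in particular for $\hat\xi:=\pi_1(\xi)$; and since $\pi_t(\xi)|_{\g_0\times\g}=\pi_1(\xi)|_{\g_0\times\g}$ for $t\ne\infty$ (as in the derivation of~\eqref{ort-v}), $L$ is contained in the $\hat\xi$-orthogonal complement $\g_0^{\perp}$ of $\g_0$. A direct computation shows that $\g_0^{\perp}$ has dimension $\dim\g-\dim\g_0+d$ and that the radical of $\hat\xi|_{\g_0^{\perp}}$ equals $\g_0^{\bar\xi}+\g^\xi$, of dimension $\rk\g_0+\rk\g-d$, where $d:=\dim(\g_0\cap\g^\xi)$; hence every isotropic subspace of $\g_0^{\perp}$ has dimension at most $\tfrac12(\dim\g-\dim\g_0+\rk\g_0+\rk\g)=\bb(\g)-\bb(\g_0)+\rk\g_0=\bb(\g,\vth)$, with equality exactly for the maximal isotropic subspaces of $\g_0^{\perp}$. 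It therefore remains to prove that $L$ is a \emph{maximal} isotropic subspace of $\g_0^{\perp}$; granting this, $\dim L=\bb(\g,\vth)$, which together with the upper bound proves the theorem. This maximality is the main obstacle: in Jordan--Kronecker terms it says that the pencil $\{\pi_t(\xi)\}$ at a generic $\xi$ has a Kronecker part spanning a Lagrangian of dimension $\bb(\g)$ and a ``Jordan defect'' of total size exactly $\dim\g_0-\rk\g_0$ concentrated at the single value $t=\infty$. That the defect at $\infty$ has this size follows from Theorem~\ref{thm:ind-inf} (which gives $\ind\g_{(\infty)}-\ind\g=\dim\g_0-\rk\g_0$) together with the centrality of $\g_0$ in $\g_{(\infty)}$, so that the whole defect is carried by $\g_0$; that there is no additional defect at $t=0$ is precisely the hypothesis $\ind\g_{(0)}=\rk\g$, i.e.\ $0\notin\BP_{\sf sing}$ (without it the defect acquires a contribution at $t=0$ and $\trdeg\gZ$ drops below $\bb(\g,\vth)$).
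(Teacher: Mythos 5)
This theorem is imported from \cite{fo} and the paper gives no proof of it: the only thing re-derived in the surrounding text is the upper bound $\trdeg\gZ\le\bb(\g,\vth)$ (via $\gZ\subset\gS(\g)^{\g_0}$ and \cite[Prop.\,1.1]{m-y}). So your proposal has to be judged on its own. Your upper bound is correct (in fact you obtain it twice, once via \cite{m-y} and once via the computation of the radical of $\hat\xi|_{\g_0^\perp}$, which checks out: $\mathrm{rad}(\hat\xi|_{\g_0^\perp})=(\g_0+\g^\xi)\cap\g_0^\perp=\g_0^{\bar\xi}+\g^\xi$, giving the bound $\tfrac12(\dim\g_0^\perp+\dim\mathrm{rad})=\bb(\g,\vth)$ for isotropic subspaces). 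The reduction of the lower bound to the statement that $L=L(\xi)$ is a maximal isotropic subspace of $(\g_0^\perp,\hat\xi)$ is also sound and fits the pencil framework of \cite{codim3,OY,fo}.

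The gap is in the last step, which is exactly where the content of the theorem lies. You assert the maximality of $L$ from two facts: that the corank drop of the pencil at $t=\infty$ equals $\dim\g_{(\infty)}^\xi-\rk\g=\dim\g_0-\rk\g_0$ (Theorem~\ref{thm:ind-inf}), and that $\g_0$ is central in $\g_{(\infty)}$. In Jordan--Kronecker terms these facts only determine the \emph{number} of Jordan blocks at $\infty$, namely $\tfrac12(\dim\g_0-\rk\g_0)$, because each block of size $2m_j$ contributes exactly $2$ to the corank drop regardless of $m_j$. What controls $\dim L$ is the \emph{total size} $2\sum_j m_j$ of the Jordan part: one has $\dim L=\bb(\g)-\sum_j m_j$, and $\sum_j m_j\ge \#\{\text{blocks}\}=\tfrac12(\dim\g_0-\rk\g_0)$ only yields $\dim L\le\bb(\g,\vth)$ --- the upper bound yet again. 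The lower bound is equivalent to every Jordan block at $\infty$ having minimal size $2$, and neither the value of $\ind\g_{(\infty)}$ nor the inclusion $\g_0\subset\ker\pi_\infty(\xi)$ rules out a block of size $4$ (which would still contribute only $2$ to the corank drop while forcing $\dim L<\bb(\g,\vth)$). So the sentence ``the whole defect is carried by $\g_0$'' conflates the corank defect with the Jordan defect, and the inequality $\trdeg\gZ\ge\bb(\g,\vth)$ --- the substantial half of the theorem, where the hypothesis $\ind\g_{(0)}=\rk\g$ must do real work --- is not established. To close the gap you would need either a direct construction of a $\bb(\g,\vth)$-dimensional isotropic subspace inside $L$ (e.g.\ via bi-homogeneous components of invariants, as in Theorem~\ref{free-main}, though that route needs a {\sf g.g.s.}), or a genuine argument that $\pi_\infty(\xi)$ vanishes on the Jordan part of the pencil at a generic $\xi$, which is what \cite{fo} supplies.
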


Note that $\gZ\subset\gS(\g)^{\g_0}$ by \cite[Eq.\,(3${\cdot}$6)]{fo}. Thereby $\trdeg\gZ\le\bb(\g,\vth)$, see~\cite[Prop.\,1.1]{m-y}. 
Thus, if  $\ind\g_{(0)}=\rk\g$, then $\trdeg\gZ$ takes the maximal possible value. 
Note also that $\bb(\g,\vth)=\bb(\g)$ if and only if $[\g_0,\g_0]=0$. 

Since $\vth$ acts on $\gS(\g)^{\g}$, there is a generating
set  $\{H_1,\dots,H_l\}\subset\gS(\g)^{\g}$ consisting of $\vartheta$-eigenvectors.
Then  $\vartheta(H_i)=\zeta^{r_i} H_i$ with $0\le r_i<m$. The integers $r_i$ depend only on the
connected component of $\Aut(\g)$ that contains $\vth$, and if $a$ is the 
order of $\vth$ in ${\Aut}(\g)/\mathsf{Int}(\g)$, then $\zeta^{ar_i}=1$. Therefore, if $\g$ is simple, then 
$\zeta^{r_i} = \pm 1$ for all types but $\GR{D}{4}$.

Recall from Section~\ref{subs:periodic} that $\g_{(0)}=\g_{(0,\varphi)}=:\g_{(0,\vth)}$ is a contraction of $\g$ defined by $\varphi$. 
Below we freely use notation of Sections~\ref{subs:contr-&-inv},~\ref{sec-ggs}. 

\begin{lm}[\cite{fo}]      \label{lm:outer-inv}
For any  $\vartheta\in {\sf Aut}(\g)$ of order $m$, we have

{\sf (1)} \ $\vartheta(H_j)=H_j$ if and only if $d^\bullet_j\in m\Z$;

{\sf (2)} \ $\sum_{j=1}^l r_j = \frac{1}{2}m(\rk\g-\rk\g_0)$;

{\sf (3)} \ $\rk\g_0=\# \{j \mid \vartheta(H_j)=H_j\}$. \qed
\end{lm}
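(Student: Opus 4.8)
The plan is first to recognise $\vartheta$ as the grading operator. By construction $\vartheta$ acts on $\g_i$ as multiplication by $\zeta^i$, and so does the transformation $\varphi_\zeta$ coming from the polynomial representation $\varphi$ of Section~\ref{subs:periodic}; hence $\vartheta=\varphi_\zeta$ on $\g$, and therefore as an automorphism of $\gS(\g)$. Writing each generator as $H_j=\sum_{i\ge 0}H_{j,i}$ with $\varphi_s(H_{j,i})=s^iH_{j,i}$, we obtain
\[
\zeta^{r_j}H_j=\vartheta(H_j)=\varphi_\zeta(H_j)=\sum_{i\ge 0}\zeta^iH_{j,i},
\]
and comparing bi-homogeneous components forces $i\equiv r_j\pmod m$ whenever $H_{j,i}\ne 0$. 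In particular $d_j^\bullet\equiv r_j\pmod m$, and since $0\le r_j<m$ this already gives $\vartheta(H_j)=H_j\iff r_j=0\iff d_j^\bullet\in m\Z$, which is {\sf (1)}.

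For {\sf (2)} and {\sf (3)} I would reduce to a diagram automorphism. One may assume $(\g,\vartheta)$ indecomposable; if $\g$ is not simple it is of the form $\gt h^{\oplus n}$ with $\vartheta$ as in~\eqref{tildet}, and a direct computation on $\gS(\g)^\g=\bigotimes_{k=1}^n\gS(\gt h)^{\gt h}$ expresses $\sum_j r_j$ and $\#\{j:\vartheta(H_j)=H_j\}$ for $(\g,\vartheta)$ in terms of the analogous quantities for $(\gt h,\tilde\vartheta)$, so these cases follow by induction on $n$. Assume now $\g$ is simple. Both sides of {\sf (2)} and {\sf (3)} are conjugation invariants and, for the left-hand sides, depend only on the connected component of $\Aut(\g)$ containing $\vartheta$ (the $r_j$ by the remark preceding the lemma). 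Replacing $\vartheta$ by a conjugate, we may take $\vartheta=\sigma\circ\Ad(\exp(\,\cdot\,))$ with $\,\cdot\,\in\te$ (Kac, Section~\ref{Kac}); then $\vartheta|_{\te}=\sigma|_{\te}$, so $\te^\vartheta=\te^\sigma$ is simultaneously a maximal torus of $\g^\vartheta$ and of $\g^\sigma$, whence $\rk\g_0=\rk\g^\sigma$. Putting $a=\Ord\in\{1,2,3\}$ and writing $\sigma(H_j)=\zeta_a^{s_j}H_j$ with $\zeta_a$ a primitive $a$-th root of unity and $0\le s_j<a$, we have $r_j=\tfrac ma s_j$, so it suffices to prove
\[
\#\{j:\sigma(H_j)=H_j\}=\rk\g^\sigma
\qquad\text{and}\qquad
\sum_{j=1}^{l} s_j=\tfrac a2\bigl(\rk\g-\rk\g^\sigma\bigr).
\]
When $a=1$, $\sigma$ is trivial and $\rk\g^\sigma=\rk\g$, so there is nothing to prove.

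For $a\in\{2,3\}$ the first equality is the classical folding statement, which I would verify by running through Kac's list of non-trivial diagram automorphisms --- the pairs $(\g,\g^\sigma)$ being among $(\gt{sl}_{2n},\gt{sp}_{2n})$, $(\gt{sl}_{2n+1},\gt{so}_{2n+1})$, $(\gt{so}_{2n},\gt{so}_{2n-1})$, $(\gt e_6,\gt f_4)$ and $(\gt{so}_8,\GR{G}{2})$ --- and matching the degrees of the basic invariants of $\g$ and of $\g^\sigma$ against the known $\sigma$-action on $\gS(\g)^\g$: $p_k\mapsto(-1)^kp_k$ for $\g=\gt{sl}_N$, the Pfaffian is negated for $\g=\gt{so}_{2n}$, and $\langle\sigma\rangle$ acts irreducibly on the two-dimensional space of degree-$4$ invariants for $\g=\gt{so}_8$; alternatively this can be read off from the root-theoretic relation between $\g$ and $\g^\sigma$ recalled in Section~\ref{folding}. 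This settles {\sf (3)}. For {\sf (2)} I would then use that $\sigma$ and $\sigma^{-1}$ are conjugate in $\Aut(\g)$ --- trivially if $a=2$, and via an order-$2$ diagram automorphism of $\gt{so}_8$ if $a=3$ --- and hence induce the same multiset of eigenvalues on the space of indecomposable invariants of each degree; since a $\sigma$-eigenvector $H_j$ with $s_j\ne 0$ is a $\sigma^{-1}$-eigenvector with exponent $a-s_j$, this forces $2\sum_j s_j=a\cdot\#\{j:s_j\ne 0\}$, and combining with {\sf (3)} gives $\sum_j s_j=\tfrac a2(\rk\g-\rk\g^\sigma)$, which is {\sf (2)}.

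The main obstacle is the folding input behind {\sf (3)}: I do not see a clean case-free proof of $\#\{j:\sigma(H_j)=H_j\}=\rk\g^\sigma$, because the restriction homomorphism $\gS(\g)^\g\to\gS(\g^\sigma)^{\g^\sigma}$ need not be surjective (e.g.\ the Pfaffian of $\gt{so}_{2n}$ is not the restriction of an invariant of $\gt{sl}_{2n}$), so I would fall back on Kac's classification; the remaining steps --- the identification $\vartheta=\varphi_\zeta$, the reduction to a diagram automorphism and the rescaling of the exponents, and the $\sigma\leftrightarrow\sigma^{-1}$ symmetry --- are routine.
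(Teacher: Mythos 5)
Your proposal is correct, and part {\sf (1)} (the identification $\vartheta=\varphi_\zeta$ on $\gS(\g)$ and the congruence $i\equiv r_j \pmod m$ on bi-homogeneous components) is exactly the standard argument. For {\sf (2)} and {\sf (3)}, however, you take a genuinely different and heavier route than the source: the paper imports the lemma from \cite{fo} without reproving it, and the argument used there --- which is in fact visible inside this paper's own proof of Lemma~\ref{lm-restr} --- is uniform and avoids the folding tables you resort to. Namely, pick $x\in\te_0\cap\g^*_{\sf reg}$ (which exists by \cite[\S 8.8]{kac}); by Kostant's regularity criterion \cite[Theorem~9]{ko63} the differentials $\textsl{d}_xH_1,\dots,\textsl{d}_xH_l$ span $\g^x=\te$, hence form a basis of $\te$; since $x$ is $\vartheta$-fixed and $\vartheta(H_j)=\zeta^{r_j}H_j$, each $\textsl{d}_xH_j$ is a $\vartheta$-eigenvector in $\te$ of eigenvalue $\zeta^{r_j}$. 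Thus $\#\{j\mid r_j=0\}=\dim\te^\vartheta=\rk\g_0$, which is {\sf (3)} with no case analysis, and $\sum_j r_j$ is read off from the spectrum of $\vartheta|_\te=\sigma|_\te$: this operator permutes a basis of the root lattice, so its non-trivial eigenvalues come in inverse pairs $\zeta^{c},\zeta^{m-c}$, giving $\sum_j r_j=\tfrac m2\dim(\te/\te_0)=\tfrac m2(\rk\g-\rk\g_0)$, i.e.\ {\sf (2)}. This is essentially the same rationality/pairing symmetry you invoke via ``$\sigma$ conjugate to $\sigma^{-1}$'', but applied to $\te$ rather than to the space of primitive invariants, which is what lets one bypass the degree-by-degree matching of basic invariants for $(\gt{sl}_N,\dots)$, $(\gt e_6,\gt f_4)$ and triality. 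Your classification-based verification is correct (and the reduction to the indecomposable and then simple case is fine), so the only cost of your approach is the case-by-case input you yourself flag as unsatisfying; the benefit of the criterion-based proof is precisely that it removes it.
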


\begin{lm} \label{lm-restr}
For each $j$,
the restriction of $H_j$ to $\g_0^*$ is non-zero if and only if $\vth(H_j)=H_j$.
\end{lm}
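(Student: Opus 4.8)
The plan is to exploit the $\vartheta$-eigenvalue of $H_j$ together with the $\BZ_m$-grading on $\gS(\g)$ induced by $\varphi$. Recall from Section~\ref{sect:3} that $\vartheta(H_j)=\zeta^{r_j}H_j$ for some $0\le r_j<m$. First I would observe that the grading of $\gS(\g)$ by $\vartheta$-eigenvalues and the $\varphi$-grading (the $\varphi_s$-eigenspace decomposition) are compatible: since $\q_k\subset\gS(\g)$ sits in $\varphi$-degree $k$ and is the $\zeta^k$-eigenspace of $\vartheta$, a product $x_{i_1}\cdots x_{i_p}$ with $x_{i_a}\in\g_{k_a}$ lies in $\varphi$-degree $\sum k_a$ and has $\vartheta$-eigenvalue $\zeta^{\sum k_a}$. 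Hence every bi-homogeneous component $H_{j,i}$ (in the notation of Section~\ref{sec-ggs}) satisfies $\vartheta(H_{j,i})=\zeta^i H_{j,i}$, so the $\varphi$-degree $i$ of any nonzero component is $\equiv r_j\pmod m$.

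Next I would identify the restriction $H_j|_{\g_0^*}$ with a specific bi-homogeneous component. Using the identification $\g\simeq\g^*$, $\g_0\simeq\g_0^*$ via the Killing form (as set up in Section~\ref{sect:3}), and $\g_0^*=\Ann(\bigoplus_{i\ge 1}\g_i)$, restriction to $\g_0^*$ kills every monomial involving a factor from $\bigoplus_{i\ge 1}\g_i$, i.e.\ every monomial of positive $\varphi$-degree. Therefore $H_j|_{\g_0^*}=H_{j,0}$, the $\varphi$-degree-zero component of $H_j$. Now if $\vth(H_j)=H_j$, then $r_j=0$; I must show $H_{j,0}\ne 0$. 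If $\vth(H_j)\ne H_j$, then $r_j\ne 0$, and since every nonzero component has $\varphi$-degree $\equiv r_j\not\equiv 0\pmod m$, the component $H_{j,0}$ vanishes, giving one direction immediately.

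For the forward direction I would argue that $H_{j,0}\ne 0$ whenever $r_j=0$. This is exactly the statement that the $\varphi$-degree-zero (= lowest-degree) component of $H_j$ is nonzero, equivalently that $H_j$ does not lie in the ideal generated by $\bigoplus_{i\ge 1}\g_i$. The cleanest route is to invoke that $\g_0=\g^\vartheta$ contains a regular semisimple element $h$ of $\g$ (as recalled after Theorem~\ref{thm:ind-inf}, via \cite[\S 8.8]{kac}); then $\kappa(h,\cdot)\in\g_0^*\cap\g^*_{\sf reg}$, and by the Chevalley restriction / Kostant picture the invariants $H_1,\dots,H_l$ restricted to a Cartan subalgebra $\te\subset\g_0$ are algebraically independent, so in particular each $H_j$ with $\vth(H_j)=H_j$ has nonzero restriction to $\te\subset\g_0^*$, hence $H_j|_{\g_0^*}\ne 0$. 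Alternatively one can count: by Lemma~\ref{lm:outer-inv}{\sf (3)} there are exactly $\rk\g_0$ indices $j$ with $\vth(H_j)=H_j$, and the restrictions of these to $\te_0^*$ must be algebraically independent (they generate, or at least separate, enough of $\bbk[\te_0^*]^{W_0}$-type invariants), forcing each to be nonzero. The main obstacle is making this nonvanishing argument precise and type-independent; I expect the regular-semisimple-element argument to be the safest, since it avoids case analysis and directly produces a point of $\g_0^*$ where $H_j$ is nonzero.
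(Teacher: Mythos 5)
Your easy direction, and the reduction of the whole lemma to showing that $H_{j,0}=H_j|_{\g_0^*}$ is non-zero whenever $r_j=0$, are correct and agree with the paper. The gap is in the nonvanishing step. Your ``cleanest route'' restricts $H_1,\dots,H_l$ to ``a Cartan subalgebra $\te\subset\g_0$'' and invokes Chevalley restriction. But $\te\subset\g_0$ holds only when $\vth$ is inner; for an outer automorphism only $\te_0=\te\cap\g_0$ lies in $\g_0$, and $\dim\te_0=\rk\g_0<l$, so the $l$ restrictions $H_j|_{\te_0^*}$ cannot all be algebraically independent --- indeed, by the easy direction the $H_j$ with $r_j\ne 0$ restrict to zero on $\g_0^*$. (In the inner case all $r_j=0$ and the statement is essentially immediate, so the outer case is the only one that matters.) Your fallback --- that exactly $\rk\g_0=\dim\te_0$ of the $H_j$ are $\vth$-fixed and that their restrictions to $\te_0^*$ ``must be algebraically independent'' because they generate or separate enough invariants --- is the right assertion, but it is precisely the point requiring proof; there is no standard Chevalley-type restriction theorem for the fixed-point subalgebra $\g_0$ (as opposed to a Cartan subspace inside $\g_1$), and you give no argument for it.

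The paper closes this gap with Kostant's regularity criterion: take $x\in\te_0^*\cap\g^*_{\sf reg}$, which exists by \cite[\S 8.8]{kac}. Then $\langle \textsl{d}_xH_1,\dots,\textsl{d}_xH_l\rangle_{\bbk}=\g^x=\te$, so all $l$ differentials are linearly independent at $x$, in particular non-zero; since $x$ is $\vth$-fixed and $H_i$ is a $\vth$-eigenvector, $\textsl{d}_xH_i$ lies in $\te_0=\g_0^x$ exactly when $r_i=0$, and then $H_{i,0}$ has non-zero differential at $x\in\g_0^*$, hence is non-zero. Note also that merely producing one regular semisimple point of $\g_0^*$ and evaluating $H_j$ there, as your plan suggests, would not suffice even if set up correctly, since $H_j$ may vanish at a particular regular point; some differential or genericity argument of this kind is genuinely needed.
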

\begin{proof}
If ${H_j}|_{\g_0^*}\ne 0$, then the lowest $\varphi$-component  $H_{j,0}\in\gS(\g_0)$ is non-zero and hence we have 
$\vth(H_j)=H_j$. 

Consider some $x\in\te_0^*\cap \g_{\sf reg}^*$, which exists by~\cite[\S 8.8]{kac},
and apply Kostant's regularity criterion~\cite[Theorem~9]{ko63} to $x$. 
According to this criterion,
$\lg \textsl{d}_x H_i\mid 1\le i\le l\rg_{\bbk}=\g^x=\te$. 
Here $\textsl{d}_x H_i\in \g_0^x=\te_0$ if and only if $\vth(H_i)=H_i$.
In view of Lemma~\ref{lm:outer-inv}{\sf (3)}, we have $\textsl{d}_x H_i \ne 0$ for each $i$ such that $r_i=0$. Then also 
$H_{i,0}\ne 0$, whenever $r_i=0$.
\end{proof}

\begin{thm}    [\cite{fo}]                       \label{free-main}
Suppose that $\vth\in\Aut(\g)$ admits a {\sf g.g.s.} and\/ $\ind\g_{(0)}=\rk\g$. Then 
\\ \indent
{\sf (i)} $\gZ_{\times}:=\mathsf{alg}\langle H_{j,i} \mid 1\le j\le l, 0\le i \le d_j^\bullet \rangle\subset\gZ$ is a polynomial Poisson-commutative sub\-alge\-bra of\/ $\gS(\g)^{\g_0}$ having 
the maximal transcendence degree. 
\\ \indent
{\sf (ii)} More precisely,
if $H_1,\dots,H_l$ is a {\sf g.g.s.} that consists of $\vth$-eigenvectors, then
$\gZ_{\times}$ is freely generated by the non-zero bi-homogeneous components of all $H_j$. \qed
\end{thm}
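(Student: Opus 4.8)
The plan is to fix a generating set $H_1,\dots,H_l$ of $\gS(\g)^\g$ that is a {\sf g.g.s.} for $\vp=\vp(\vth)$ and consists of $\vth$-eigenvectors (as in the hypothesis of {\sf (ii)}), say $\vth(H_j)=\zeta^{r_j}H_j$ with $0\le r_j<m$, and to write $H_j=\sum_i H_{j,i}$ for the decomposition into $\vp$-homogeneous components. \emph{The easy assertions come first.} For $s\in\bbk^\star$ one has $\vp_s^{-1}(H_j)=\sum_i s^{-i}H_{j,i}\in\vp_s^{-1}(\gS(\g)^\g)=\cz_{s^m}$; letting $s$ run over $d_j^\bullet+1$ distinct nonzero values and inverting a Vandermonde matrix exhibits each $H_{j,i}$ as a $\bbk$-linear combination of elements of the $\cz_{s^m}$, so that $\gZ_\times=\mathsf{alg}\langle\cz_t\mid t\in\bbk^\star\rangle$ (the inclusion $\cz_t\subseteq\gZ_\times$ being obvious). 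Since $\bbk^\star\subseteq\BP_{\sf reg}$, this gives $\gZ_\times\subseteq\gZ$; consequently $\gZ_\times$ is Poisson-commutative, being contained in $\gZ$, which is Poisson-commutative for every bracket of $\mathcal P$ and in particular for $\{\,\,,\,\}=\{\,\,,\,\}_1$, and $\gZ_\times\subseteq\gS(\g)^{\g_0}$ (because $\gZ$ is, or directly since $\vp_s$ commutes with $\ad\g_0$). Hence $\trdeg\gZ_\times\le\bb(\g)-\bb(\g_0)+\ind\g_0=\bb(\g,\vth)$ by \cite[Prop.\,1.1]{m-y}, using $\ind\g_0=\rk\g_0$.

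\emph{The heart of the matter is the reverse bound $\trdeg\gZ_\times\ge\bb(\g,\vth)$}, which I would establish by evaluating differentials at a generic $\xi\in\g^*$. One may choose $\xi$ regular for $\g_{(t)}$ for \emph{all} $t\in\BP_{\sf reg}$ simultaneously (a dense open condition: for $t\in\bbk^\star$ because $\g_{(t)}\simeq\g$, the family is algebraic in $t$, and $\g$ has the {\it codim}--$3$ property; for $t\in\{0,\infty\}\cap\BP_{\sf reg}$ because $(\g_{(t)}^*)_{\sf reg}$ is dense open), and so that $\dim\textsl{d}_\xi\gZ_\times=\trdeg\gZ_\times$ and $\dim\textsl{d}_\xi\gZ=\trdeg\gZ$. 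As $\cz_t=\gS(\g_{(t)})^{\g_{(t)}}$ with $\g_{(t)}\simeq\g$ reductive, the Kostant regularity criterion \cite[Theorem~9]{ko63} gives $\textsl{d}_\xi\cz_t=\g_{(t)}^\xi$ for $t\in\bbk^\star$, whence $\textsl{d}_\xi\gZ_\times=\sum_{t\in\bbk^\star}\textsl{d}_\xi\cz_t=\sum_{t\in\bbk^\star}\g_{(t)}^\xi=:W$ (Leibniz rule). For this $\xi$, $t\mapsto\g_{(t)}^\xi=\ker\pi_t(\xi)$ is a morphism on $\BP_{\sf reg}$ into the variety of $(\ind\g)$-dimensional subspaces of $\g$, and the hypothesis $\ind\g_{(0)}=\rk\g=\ind\g$ is exactly what guarantees $0\in\BP_{\sf reg}$; thus $\g_{(0)}^\xi$ (and $\g_{(\infty)}^\xi$ whenever $\infty\in\BP_{\sf reg}$) is a \emph{limit} of the subspaces $\g_{(t)}^\xi$, $t\in\bbk^\star$. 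Since all of these lie in the fixed subspace $W$ and containment in $W$ is a closed condition, $\g_{(0)}^\xi,\g_{(\infty)}^\xi\subseteq W$; together with $\textsl{d}_\xi\cz_0\subseteq\g_{(0)}^\xi$ and $\textsl{d}_\xi\cz_\infty\subseteq\g_{(\infty)}^\xi$ from \eqref{incl}, this forces $\textsl{d}_\xi\gZ=\sum_{t\in\BP_{\sf reg}}\textsl{d}_\xi\cz_t\subseteq W$. As $\gZ_\times\subseteq\gZ$, we conclude $\textsl{d}_\xi\gZ_\times=W=\textsl{d}_\xi\gZ$, hence $\trdeg\gZ_\times=\trdeg\gZ=\bb(\g,\vth)$ by the theorem of \cite{fo} recalled above.

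\emph{Finally I would count the nonzero components.} A nonzero $H_{j,i}$ is a $\vth$-eigenvector with eigenvalue $\zeta^i$, so $i\equiv r_j\pmod m$; also $0\le i\le d_j^\bullet$ and $d_j^\bullet\equiv r_j\pmod m$ since $H_j^\bullet=H_{j,d_j^\bullet}\ne0$. Hence the number $N$ of nonzero bi-homogeneous components of $H_1,\dots,H_l$ satisfies $N\le\sum_j\bigl((d_j^\bullet-r_j)/m+1\bigr)$. Now $\sum_j d_j^\bullet=D_\vp$ by Theorem~\ref{thm:kot14}{\sf (ii)} (we have a {\sf g.g.s.}), $D_\vp=\sum_i i\dim\g_i=\tfrac m2(\dim\g-\dim\g_0)$ (evaluate $\vp$ on a grading-adapted volume form and use $\dim\g_i=\dim\g_{m-i}$), and $\sum_j r_j=\tfrac12 m(\rk\g-\rk\g_0)$ by Lemma~\ref{lm:outer-inv}{\sf (2)}; substituting gives $N\le\bb(\g)-\bb(\g_0)+\rk\g_0=\bb(\g,\vth)$. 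Since $\gZ_\times$ is generated by these $N$ elements and $\trdeg\gZ_\times=\bb(\g,\vth)$ by the previous step, we must have $N=\bb(\g,\vth)$ and the nonzero bi-homogeneous components of $H_1,\dots,H_l$ are algebraically independent. This proves {\sf (ii)}, and with it the polynomiality in {\sf (i)}; the maximality of $\trdeg\gZ_\times$ has already been shown.

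I expect the transcendence-degree step to be the main obstacle: one must rule out that the ``boundary'' centres $\cz_0$ and (when it is regular) $\cz_\infty$ contribute functions algebraically independent of those coming from the generic members $\cz_t$, $t\in\bbk^\star$, and this is precisely what the limit argument in the Grassmannian delivers --- it is also the only place where $\ind\g_{(0)}=\rk\g$ is genuinely needed for that part of the statement. An alternative route, closer to \cite{fo}, is to observe that $\cz_0$ is algebraic over $\bbk[H_1^\bullet,\dots,H_l^\bullet]\subseteq\gZ_\times$ --- which is the real force of the {\sf g.g.s.} hypothesis, since then the $H_j^\bullet$ are algebraically independent while $\trdeg\cz_0\le\ind\g_{(0)}=\rk\g$ --- and to dispose of $\cz_\infty$ by a similar dimension count.
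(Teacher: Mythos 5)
Your argument is correct, and the paper itself offers no proof to compare against: Theorem~\ref{free-main} is imported verbatim from \cite{fo} with a \qed. Your three steps are exactly the mechanism of that source and of the techniques this paper uses elsewhere: the Vandermonde identification $\gZ_\times=\mathsf{alg}\langle\cz_t\mid t\in\bbk^\star\rangle$ gives the inclusion in $\gZ$ and the upper bound via \cite[Prop.\,1.1]{m-y}; the lower bound $\trdeg\gZ_\times=\trdeg\gZ=\bb(\g,\vth)$ via Kostant's criterion for each $\g_{(t)}$, $t\in\bbk^\star$, and the limit of the stabilisers $\g_{(t)}^\xi$ in the Grassmannian (the hypothesis $\ind\g_{(0)}=\rk\g$ entering exactly to make $\g_{(0)}^\xi$ an $(\ind\g)$-dimensional limit) is the same device the paper itself deploys with $\gt v=\lim_{t\to\infty}\q_{(t)}^\xi$ in Theorems~\ref{thm:ind-inf} and~\ref{inf-inv}; and the component count $N\le\bb(\g,\vth)$ from $\sum_j d_j^\bullet=D_\vp$ (Theorem~\ref{thm:kot14}{\sf (ii)}) together with Lemma~\ref{lm:outer-inv}{\sf (2)} is precisely how \cite{fo} closes the argument. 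One small point you leave implicit: your counting step needs the {\sf g.g.s.} to consist of $\vth$-eigenvectors, whereas part {\sf (i)} only assumes that \emph{some} {\sf g.g.s.} exists; since your first step shows $\gZ_\times$ is independent of the chosen generating set, you should either invoke the paper's standing convention that the $H_j$ are chosen as eigenvectors, or note that an eigenvector {\sf g.g.s.} can always be extracted from a given one (as is done in \cite{fo}). With that remark added, the proof is complete.
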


A precise relationship between $\gZ$ and $\gZ_\times$ depends on further properties of $\vth$. Two
complementary assertion are given below.

\begin{cl}[\cite{fo}]      \label{cor1:main4}
In addition to the hypotheses of \emph{Theorem~\ref{free-main}},
suppose that  $\g_{(0)}$ has the {\sl codim}--$2$ property and $\g_0=\g^\vth$ is \emph{not} abelian.
Then $\gZ=\gZ_\times$ is the polynomial algebra freely generated by all non-zero bi-homogeneous components $H_{j,i}$.
\end{cl}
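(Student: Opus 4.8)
The plan is to combine the three assertions of Theorem~\ref{free-main} and Theorem~\ref{thm:kot14} with the hypothesis that $\g_{(0)}$ is \emph{not} abelian, i.e.\ $[\g_0,\g_0]\ne 0$, which forces $\infty\in\BP_{\sf sing}$ by Corollary~\ref{cor:infty=reg}, so $\BP_{\sf sing}=\{\infty\}$ and $\gZ=\mathsf{alg}\lg\cz_t\mid t\ne\infty\rg$. We already know from Theorem~\ref{free-main}{\sf(i)} that $\gZ_\times\subset\gZ$ and that $\gZ_\times$ is polynomial of transcendence degree $\bb(\g,\vth)$; by \cite[Prop.\,1.1]{m-y} and the inclusion $\gZ\subset\gS(\g)^{\g_0}$ this is already the maximal possible value of $\trdeg\gZ$. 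Hence the two algebras have the same transcendence degree, and it remains only to show the reverse inclusion $\gZ\subseteq\gZ_\times$, or equivalently that every generator of each $\cz_t$ with $t\in\BP_{\sf reg}$ already lies in $\gZ_\times$.

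First I would fix a {\sf g.g.s.} $H_1,\dots,H_l$ consisting of $\vth$-eigenvectors (possible by Theorem~\ref{free-main}{\sf(ii)}), so that $\gZ_\times$ is freely generated by the non-zero bi-homogeneous components $H_{j,i}$. The key point is that $\gZ_\times$ is an \emph{algebraically closed} subalgebra of $\gS(\g)$ in the sense of Theorem~\ref{ppy-max}: to apply that theorem I must check that the variety $\gJ(\{H_{j,i}\})$ where all the differentials $\textsl{d}H_{j,i}$ become dependent has codimension $\ge 2$ in $\g^*$. This is exactly where the {\sl codim}--$2$ hypothesis on $\g_{(0)}$ enters, via the last displayed formula in Theorem~\ref{thm:kot14}{\sf(iii)}: the leading components $H_j^\bullet$ cut out $(\g_{(0)})^*_{\sf sing}$, which has codimension $\ge 2$ by assumption, and one propagates this (using that the $H_j^\bullet$ are among the $H_{j,i}$, together with a semicontinuity/limit argument relating $\gJ$ of the full family to $\gJ$ of the leading terms and to the Kostant singular locus $\g^*_{\sf sing}$ of $\g$ itself, which has codimension $3$) to conclude $\codim_{\g^*}\gJ(\{H_{j,i}\})\ge 2$. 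Therefore $\gZ_\times$ is algebraically closed in $\gS(\g)$.

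With algebraic closedness in hand, the inclusion $\gZ\subseteq\gZ_\times$ follows by a transcendence-degree argument. Each $\cz_t$ with $t=s^m\in\bbk^\star$ equals $\vp_s^{-1}(\gS(\g)^\g)$, which is freely generated by $\vp_s^{-1}(H_1),\dots,\vp_s^{-1}(H_l)$; each such generator is a $\bbk$-linear combination of the finitely many $H_{j,i}$, hence lies in $\gZ_\times$. Likewise $\cz_0$ is generated by $H_1^\bullet,\dots,H_l^\bullet$ by Theorem~\ref{thm:kot14}{\sf(iii)} (here we use $\ind\g_{(0)}=l$ and the {\sl codim}--$2$ property), and these are among the $H_{j,i}$. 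Since $\BP_{\sf reg}\subset\bbk\cup\{0\}$, it follows that all generators of all $\cz_t$, $t\in\BP_{\sf reg}$, lie in $\gZ_\times$, whence $\gZ=\mathsf{alg}\lg\cz_t\mid t\in\BP_{\sf reg}\rg\subseteq\gZ_\times$. Combined with $\gZ_\times\subseteq\gZ$, this gives $\gZ=\gZ_\times$, and the latter is the polynomial algebra freely generated by the non-zero bi-homogeneous components $H_{j,i}$ by Theorem~\ref{free-main}{\sf(ii)}.

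The main obstacle I anticipate is the codimension estimate for $\gJ(\{H_{j,i}\})$: controlling where the differentials of the \emph{whole} collection of bi-homogeneous components degenerate, as opposed to just the leading components $H_j^\bullet$ or just the original generators $H_j$. One expects to handle a generic point by noting that if $\textsl{d}_\xi H_j^\bullet$ ($1\le j\le l$) are independent then so are the $\textsl{d}_\xi H_{j,i}$ for all $i$ up to a change of coordinates provided by $\vp_s$; the delicate part is the behaviour over the divisor-free locus, where one needs that the $\vth$-action and the grading do not create an extra divisor of degeneracy beyond the union of $\g^*_{\sf sing}$ (codimension $3$) and a $\vp$-translate of $(\g_{(0)})^*_{\sf sing}$ (codimension $\ge 2$). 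Granting the results quoted from \cite{fo,contr}, this is essentially bookkeeping, but it is the step that genuinely uses every hypothesis of the corollary.
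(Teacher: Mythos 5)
Your core argument is correct and is essentially the proof from \cite{fo} (the present paper only quotes this corollary without reproving it): since $\g_0$ is not abelian, Corollary~\ref{cor:infty=reg} forces $\infty\in\BP_{\sf sing}$, while $\ind\g_{(0)}=\rk\g=\ind\g$ puts $0\in\BP_{\sf reg}$, so $\gZ$ is generated by $\cz_0$ together with the $\cz_t$, $t\in\bbk^\star$; each $\vp_s^{-1}(H_j)=\sum_i s^{-i}H_{j,i}$ lies in $\gZ_{\times}$, and $\cz_0=\bbk[H_1^\bullet,\dots,H_l^\bullet]\subset\gZ_{\times}$ by Theorem~\ref{thm:kot14}{\sf (iii)} --- the only place where the {\sl codim}--$2$ hypothesis is needed --- whence $\gZ\subseteq\gZ_{\times}\subseteq\gZ$, and freeness is Theorem~\ref{free-main}{\sf (ii)}. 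Be aware, though, that your second paragraph and your closing ``main obstacle'' are superfluous: the argument you actually run never invokes algebraic closedness of $\gZ_{\times}$ or Theorem~\ref{ppy-max}, and no codimension estimate for $\gJ(\{H_{j,i}\})$ is required, because the inclusion $\gZ\subseteq\gZ_{\times}$ is established by exhibiting explicit generators of every $\cz_t$ with $t\in\BP_{\sf reg}$ as elements of $\gZ_{\times}$; the delicate degeneracy analysis you anticipate simply does not enter this corollary.
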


\begin{cl}  [\cite{fo}]     \label{cor2:main4}
In addition to the hypotheses of \emph{Theorem~\ref{free-main}},
suppose that $\g_{(0)}$ has the {\sl codim}--$2$ property, $\vth$ is inner, and $\g_0=\g^\vth$ \emph{is} abelian. Then $\cz_\infty=\gS(\g_0)$ and $\gZ=\mathsf{alg}\lg \gZ_{\times},\g_0\rg$ is a polynomial algebra.
 \end{cl}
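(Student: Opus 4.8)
\emph{Reductions.} Fix a {\sf g.g.s.} $H_1,\dots,H_l$ of $\gS(\g)^{\g}$, which exists by hypothesis. Since $\vth$ is inner, we may assume (after replacing $\vth$ by a conjugate, see Section~\ref{Kac}) that $\g_0$ contains a Cartan subalgebra $\te$ of $\g$; as $\g_0=\g^\vth$ is abelian and contains $\te$, every element of $\g_0$ centralises $\te$, so $\g_0=\z_\g(\te)=\te$. Hence $\dim\g_0=\ind\g_0=\rk\g_0=\rk\g=:l$, so $\bb(\g_0)=l$. Moreover $\g_0^*\cap\g^*_{\sf reg}\ne\varnothing$ (the regular semisimple elements of $\te$ lie there), so Theorem~\ref{thm:ind-inf} gives $\ind\g_{(\infty)}=\dim\g_0$, and Corollary~\ref{cor:infty=reg} gives $\infty\in\BP_{\sf reg}$; together with the hypothesis $\ind\g_{(0)}=\rk\g$ this shows $\BP_{\sf reg}=\BP$. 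Finally, by Lemma~\ref{lm:outer-inv}{\sf (3)} there are exactly $\rk\g_0=l$ indices $j$ with $\vth(H_j)=H_j$, so \emph{every} $H_j$ is $\vth$-invariant; hence by Lemma~\ref{lm-restr} the lowest $\vp$-component $H_{j,0}=H_j|_{\g_0^*}$ is a non-zero element of $\gS(\g_0)$ for each $j$.

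\emph{The Poisson centre $\cz_\infty$.} Since $\g_0$ is contained in the centre of $\g_{(\infty)}$ (Proposition~\ref{prop:graded-limits}), we have $\gS(\g_0)\subseteq\cz_\infty$. Conversely, \eqref{eq:neravenstvo-ind} applied to $\g_{(\infty)}$ yields $\trdeg\cz_\infty\le\ind\g_{(\infty)}=\dim\g_0=\trdeg\gS(\g_0)$, so $\cz_\infty$ is algebraic over $\operatorname{Frac}(\gS(\g_0))$. Taking for the $F_i$ in Theorem~\ref{ppy-max} a basis of $\g_0$ (a space of algebraically independent linear elements of $\gS(\g)$), for which $\gJ(F_1,\dots,F_l)=\varnothing$, we conclude that $\gS(\g_0)$ is an algebraically closed subalgebra of $\gS(\g)$; therefore $\cz_\infty\subseteq\gS(\g_0)$, and so $\cz_\infty=\gS(\g_0)$.

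\emph{Description of $\gZ$ and polynomiality.} Since $\BP_{\sf reg}=\BP$, we have $\gZ=\mathsf{alg}\lg\cz_t\mid t\in\BP\rg$. For $t=s^m\in\bbk^\star$ the algebra $\cz_t=\vp_s^{-1}(\cz_1)$ is generated by the elements $\vp_s^{-1}(H_j)=\sum_i s^{-i}H_{j,i}$, all of which lie in $\gZ_{\times}$; by Theorem~\ref{thm:kot14}{\sf (iii)} (applicable since $\g_{(0)}$ has the {\sl codim}--$2$ property, $\ind\g_{(0)}=l$, and $H_1,\dots,H_l$ is a {\sf g.g.s.}) the algebra $\cz_0$ is generated by $H_1^\bullet,\dots,H_l^\bullet\in\gZ_{\times}$; and $\cz_\infty=\gS(\g_0)$ by the previous step. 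Since also $\gZ_{\times}\subseteq\gZ$ (Theorem~\ref{free-main}) and $\g_0\subseteq\cz_\infty\subseteq\gZ$, this yields $\gZ=\mathsf{alg}\lg\gZ_{\times},\g_0\rg$. Now by Theorem~\ref{free-main} the ring $\gZ_{\times}$ is a polynomial algebra, freely generated by the non-zero bi-homogeneous components of the $H_j$; among these are the $l$ elements $H_{1,0},\dots,H_{l,0}$ (non-zero by the first step), the remaining ones being, say, $G_1,\dots,G_M$. Fix a basis $y_1,\dots,y_l$ of $\g_0$. Since each $H_{j,0}$ lies in $\gS(\g_0)=\bbk[y_1,\dots,y_l]$, it is redundant as a generator of $\gZ=\mathsf{alg}\lg\gZ_{\times},\g_0\rg$, so $\gZ=\mathsf{alg}\lg y_1,\dots,y_l,G_1,\dots,G_M\rg$. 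This algebra is generated by $l+M$ elements and contains $\gZ_{\times}$, which has transcendence degree $l+M$; hence $\trdeg\gZ=l+M$, the generators $y_1,\dots,y_l,G_1,\dots,G_M$ are algebraically independent, and $\gZ$ is a polynomial algebra.

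\emph{Expected difficulty.} The core of the argument is the identity $\cz_\infty=\gS(\g_0)$: the inclusion $\supseteq$ is formal, while $\subseteq$ must combine the index computation of Theorem~\ref{thm:ind-inf} with the algebraic-closedness statement of Theorem~\ref{ppy-max}. Everything after that is transcendence-degree bookkeeping, the one structural input being that all the components $H_{j,0}$ already lie in $\gS(\g_0)$, which is exactly what lets $\g_0$ be adjoined to $\gZ_{\times}$ at the cost of trading these generators for linear coordinates.
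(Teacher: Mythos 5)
Your argument is correct. Note that the paper states this corollary without proof (it is imported from \cite{fo}), so there is no in-document proof to compare against; but your route is exactly the one the paper itself signals: the identity $\cz_\infty=\gS(\g_0)$ via $\ind\g_{(\infty)}=\dim\g_0$ (Theorem~\ref{thm:ind-inf}) plus the algebraic closedness of $\gS(\g_0)$ in $\gS(\g)$ (Theorem~\ref{ppy-max}), and the generator-swapping bookkeeping is the same device the paper uses to prove its outer-automorphism generalisation, Corollary~\ref{4.8}. One cosmetic remark: you invoke Lemma~\ref{lm:outer-inv}{\sf (3)} to conclude that all $H_j$ are $\vth$-invariant, which strictly speaking applies to an eigenvector generating system; but for inner $\vth$ this is automatic since inner automorphisms act trivially on $\gS(\g)^{\g}$, so nothing is lost.
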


\subsection{The equality for the index of $\g_{(0)}$} \label{sec-new}
The equality $\ind\g_{(0)}=\rk\g$ is very important in our context. If it holds, then 
$\gZ$ can be extended to a Poisson-commutative subalgebra of the maximal possible transcendence degree $\bb(\g)$ 
in the same way as in \cite[Sect.\,6.2]{OY}.

\textbullet \ It holds if $\oth$ is two or three \cite{p07,MZ23}. 

\textbullet \ It holds if $\g$ is either $\gt{so}_n$ or of type ${\sf G}_2$   \cite{MZ23}. 

\textbullet \  It holds if $\g_1\cap\g_{\sf reg}\ne\varnothing$ 
by  \cite[Prop.\,5.3]{p09}. 

\noindent
Below we will see a few more positive examples.

Let $(\g,\vth)$ be an indecomposable pair, where $\g=\h^{\oplus n}$, the algebra 
$\h$ is simple, and $\vth$ corresponds to $\tilde\vth\in\Aut(\h)$. Note that $\tilde\vth$ may be trivial.  
We remark also that $\gt g_0\simeq\gt h^{\tilde\vth}$. 

Let $\gt h_{(0)}$ be the contraction of $\gt h$ associated with $\tilde\vth$. 

\begin{lm}[{\cite[Lemma~8.1]{fo}}]\label{ind-d}
 If $\ind\h_{(0)}=\rk\h$, then 
$\ind\g_{(0)}=\rk\g$. If there is a   {\sf g.g.s.} for $\tilde\vth$ in $\gS(\h)$, 
then there is a  {\sf g.g.s.} for $\vth$ in $\gS(\g)$. 
\qed \end{lm}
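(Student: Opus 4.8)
The plan is to reduce everything to an explicit model of $\g_{(0)}$. Write $\tilde m=\mathsf{ord}(\tilde\vth)$, so that $\mathsf{ord}(\vth)=m=n\tilde m$; fix $\zeta=\sqrt[m]1$, set $\tilde\zeta=\zeta^{n}$ (a primitive $\tilde m$-th root of unity), let $\h=\bigoplus_{r=0}^{\tilde m-1}\h_r$ be the $\BZ_{\tilde m}$-grading attached to $\tilde\vth$, and for $i\in\{0,\dots,m-1\}$ put $\bar i=i\bmod\tilde m$. First I would diagonalise $\vth$ on $\g=\h^{\oplus n}$: a direct computation shows that projection to the first summand identifies the $\zeta^{i}$-eigenspace $\g_i$ with $\h_{\bar i}$, and that under these identifications the bracket $\g_i\times\g_j\to\g_{(i+j)\bmod m}$ of $\g$ becomes the $\h$-bracket $\h_{\bar i}\times\h_{\bar j}\to\h_{\overline{i+j}}$. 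Combining this with Proposition~\ref{prop:graded-limits} gives $\g_{(0)}=\bigoplus_{i=0}^{m-1}\h_{\bar i}$, with $[\h_{\bar i},\h_{\bar j}]_{(0)}$ equal to the $\h$-bracket when $i+j\le m-1$ and to $0$ when $i+j\ge m$.

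For the index I would degenerate once more. Let $\psi_u\in\GL(\g)$ act on $\g_i$ as $u^{\bar i}\cdot\id$; this is a polynomial representation of $\bbk^\star$, and in the limit $u\to0$ the bracket $[\h_{\bar i},\h_{\bar j}]_{(0)}$ is multiplied by $u^{\bar i+\bar j-\overline{i+j}}$, which is $1$ if $\bar i+\bar j\le\tilde m-1$ and $u^{\tilde m}$ if $\bar i+\bar j\ge\tilde m$. Hence $\g_{(0)}$ contracts to $\Gamma:=\h_{(0,\tilde\vth)}\otimes\bbk[s]/(s^{n})$, the truncated current Lie algebra of $\h_{(0,\tilde\vth)}$ (identify the grade-$i$ part of $\g_{(0)}$ with $\h_{\bar i}\otimes s^{\lfloor i/\tilde m\rfloor}$). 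By semi-continuity of the index and the standard formula $\ind(\ah\otimes\bbk[s]/(s^{n}))=n\ind\ah$ (Raïs for $n=2$, the general case being analogous),
\[
\rk\g=\ind\g\le\ind\g_{(0)}\le\ind\Gamma=n\,\ind\h_{(0,\tilde\vth)}=n\rk\h=\rk\g,
\]
where the middle equality on the right uses the hypothesis $\ind\h_{(0)}=\rk\h$; so $\ind\g_{(0)}=\rk\g$. (One can also bound $\ind\Gamma\le\ind(\h\otimes\bbk[s]/(s^{n}))=n\rk\h$ without the hypothesis, since $\Gamma$ is a contraction of $\h\otimes\bbk[s]/(s^{n})$ as well.)

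For the {\sf g.g.s.} statement I would start from a {\sf g.g.s.} $\tilde H_1,\dots,\tilde H_{l'}$ ($l'=\rk\h$) of $\tilde\vth$ consisting of $\tilde\vth$-eigenvectors, $d_p=\deg\tilde H_p$. Since $\gS(\g)^{\g}=\gS(\h)^{\h}\otimes\cdots\otimes\gS(\h)^{\h}$ ($n$ factors), for each $p$ I would Fourier-recombine the $n$ copies $\tilde H_p[1],\dots,\tilde H_p[n]$ of $\tilde H_p$ (one per tensor factor) into $\vth$-eigenvectors $H_{p,a}=\sum_k\epsilon_a^{\,k}\tilde H_p[k]$ ($a=1,\dots,n$, with $\epsilon_a$ suitable roots of unity); these $nl'=\rk\g$ polynomials freely generate $\gS(\g)^{\g}$. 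By Theorem~\ref{thm:kot14}{\sf(i),(ii)} it then suffices to prove $\sum_{p,a}\deg_\varphi H_{p,a}=D_\varphi$, where a short computation on the volume form gives
\[
D_\varphi=\sum_{i=0}^{m-1}i\dim\h_{\bar i}=\tfrac{n(n-1)}{2}\tilde m\dim\h+nD_{\tilde\varphi},
\]
and $D_{\tilde\varphi}=\sum_p\deg_{\tilde\varphi}\tilde H_p$ is the {\sf g.g.s.} identity for $\tilde\vth$. Expanding $\tilde H_p[k]$ in the coordinates dual to the $\g_i$ (a linear coordinate on $\g_i^{*}$ has $\varphi$-degree $i$) shows that each naive generator $\tilde H_p[k]$ has $\varphi$-degree $\deg_{\tilde\varphi}\tilde H_p+(n-1)\tilde m\,d_p$ and therefore overshoots; the Fourier recombination is precisely what cancels these top terms for all but the correct value of $a$, and carrying out this cancellation analysis should yield the displayed equality.

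I expect the only genuine difficulty to be that last point. Diagonalising $\vth$ and producing the contraction $\g_{(0)}\ard\Gamma$ are routine, and the index then follows from semi-continuity and the classical index of a truncated current algebra. The real work is in the third step: tracking, monomial by monomial, which recombination $H_{p,a}$ still retains its top $\varphi$-term after summing over the $n$ summands — that is, computing $\deg_\varphi H_{p,a}$ through the cancellations — and verifying that these $n$ integers add up (over $p$ and $a$) to $D_\varphi$. This is a finite but delicate bookkeeping argument, and it is exactly here that the stronger hypothesis that $\tilde\vth$ \emph{admits} a {\sf g.g.s.}, rather than merely satisfying $\ind\h_{(0)}=\rk\h$, is used.
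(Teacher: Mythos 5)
The paper does not reprove this lemma --- it is imported verbatim from \cite[Lemma~8.1]{fo} --- so I can only judge your argument on its own terms. Your first half is correct and complete: the identification of $\g_i$ with $\h_{\bar i}$ via the first coordinate, the second contraction $\psi_u|_{\g_i}=u^{\bar i}\,\id$ of $\g_{(0)}$ onto the truncated current algebra $\h_{(0,\tilde\vth)}\otimes\bbk[s]/(s^n)$ (the exponent $\bar i+\bar j-\overline{i+j}\in\{0,\tilde m\}$ is computed correctly, so the limit exists), and the sandwich $\rk\g\le\ind\g_{(0)}\le\ind\Gamma=n\,\ind\h_{(0)}$ by two applications of semicontinuity together with the Ra\"is--Tauvel formula for truncated current algebras. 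That is a clean, self-contained proof of the index statement.

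The second half, however, has a genuine gap, and you flag it yourself: the entire content of the {\sf g.g.s.} claim is the equality $\sum_{p,a}\deg_\vp H_{p,a}=D_\vp$, and you only verify the two sides that are essentially free --- the formula for $D_\vp$ and the upper bound $\deg_\vp\tilde H_p[k]\le\deg_{\tilde\vp}\tilde H_p+(n-1)\tilde m\,d_p$ for the naive generators. Since $\sum_{p,k}\bigl(\deg_{\tilde\vp}\tilde H_p+(n-1)\tilde m\,d_p\bigr)-D_\vp=\tfrac{1}{2}n(n-1)\tilde m\,\rk\h>0$ for $n\ge 2$, the Fourier recombination must lower the total $\vp$-degree by exactly this amount, and nothing in your text establishes that it does. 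Concretely, what is missing is: (a) a proof that the top $\vp$-components of $\tilde H_p[1],\dots,\tilde H_p[n]$ are pairwise proportional with a \emph{geometric} system of ratios $c_k=\mu^k$, so that exactly one combination $H_{p,a}$ retains the top term while the other $n-1$ drop; and (b) a recursive control of how far each of the remaining $H_{p,a}$ drops (one needs $\deg_\vp H_{p,a}$ exactly, not just an upper bound, and the levels of $\tilde H_p[k]$ are not spaced by a single uniform gap, since the $\vp$-components of a coordinate on $\h_r^*[k]$ sit in degrees $r,r+\tilde m,\dots,r+(n-1)\tilde m$ and mix with the internal $\tilde\vp$-grading of $\tilde H_p$). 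Note that Theorem~\ref{thm:kot14}{\sf (i)} only gives the inequality $\ge D_\vp$ for free, which is the wrong direction. A workable way to close the gap would be to bypass the degree bookkeeping altogether and instead exhibit the highest components directly: under your contraction $\g_{(0)}\ard\h_{(0)}\otimes\bbk[s]/(s^n)$ the candidates for $H_{p,a}^\bullet$ become the $s$-coefficients of $\tilde H_p^\bullet$ evaluated on a current $x(s)$, whose algebraic independence is the Ra\"is--Tauvel argument again; but as written the crucial step is asserted ("should yield the displayed equality") rather than proved.
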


\begin{thm} \label{ind-au}
Suppose that either $\g_0=\te_0\subset\te$ or $\g$ is simple and among the Kac labels of $\vth$, see Section~\ref{Kac} for the definition, only $p_0$ is zero. 
Then $\ind\g_{(0)}=\rk\g$.
\end{thm}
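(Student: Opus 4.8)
The plan is to reduce everything to showing that $\g_{(0)}^*$ contains a point whose stabiliser has dimension $\rk\g$, since the semi-continuity of index always gives $\ind\g_{(0)}\ge\ind\g=\rk\g$, and we need the reverse inequality. I would treat the two hypotheses of the statement separately but with a common mechanism. In both cases the key is to exhibit a convenient $\gamma\in\g^*$, examine its stabiliser $\g^\bullet$-style ``leading term'' in the contracted bracket, and show the resulting stabiliser in $\g_{(0)}$ stays $\rk\g$-dimensional.

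First consider the case $\g_0=\te_0\subset\te$, i.e.\ $\vth$ acts on a Cartan subalgebra with fixed-point space exactly $\te_0$ — but the hypothesis $\g_0=\te_0$ forces $\g_0$ abelian and $\g_0\subset\te$, so actually $\te_0=\g_0$ means $\vth$ is (conjugate to) an automorphism whose fixed points form a torus of dimension $\rk\g_0$. Here I would use the explicit grading $\g=\bigoplus_{i}\g_i$ with $\te=\g_0$ when $\vth$ is inner, or more generally pick a $\vth$-stable Cartan and use Proposition~\ref{prop:graded-limits}: $\g_{(0)}[i]=\g_i$. I would take $\gamma$ to be a generic element of $\g_0^*=\te_0^*$ that is also regular in $\g^*$; such $\gamma$ exists because $\te_0$ contains regular semisimple elements of $\g$ (this is exactly the input cited from \cite[\S8.8]{kac}, used already in Lemma~\ref{lm-restr}). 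For such $\gamma$ one computes $\g_{(0)}^\gamma$ directly from the graded bracket: since $\gamma\in\g_0^*$ annihilates all $\g_i$ with $i\ne 0$, the pairing $\gamma([\cdot,\cdot]_0)$ on $\g_i\times\g_j$ only sees the component landing in $\g_0$, and because the $\g_{(0)}$-bracket is the ``graded'' bracket with no wrap-around (in contrast to Eq.~\eqref{eq:Z_m}), $[\g_i,\g_j]_0\subset\g_{(0)}[i+j]$ which is $0$ for $i+j\ge m$; hence $\gamma([\g_i,\g_j]_0)=0$ unless $i=j=0$, so $\g_{(0)}^\gamma\supseteq\bigoplus_{i\ge 1}\g_i$, and on $\g_0$ it is all of $\g_0$. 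That gives $\dim\g_{(0)}^\gamma\ge\dim\g=$ everything, which is too big — so I must instead work at a \emph{generic} point of $\g_{(0)}^*$, not one supported on $\g_0^*$. The right move is the one used in Theorem~\ref{thm:ind-inf}: take $\gamma\in\g^*_{\sf reg}$ generic, form $\gt v=\lim_{t\to 0}\g_{(t)}^\gamma$, note $\dim\gt v=\rk\g$, and show $\gt v=\g_{(0)}^\gamma$ for generic $\gamma$ by an isotropy/dimension-count argument identical in spirit to the $\infty$ case; the equality $\g_0=\te_0$ is what forces the obstruction term ($\ind\g_0$ vs.\ $\dim\g_0$, appearing in the $\infty$ formula) to behave, and in fact here one expects $\ind\g_{(0)}=\rk\g$ to drop out because the grading has $\te$ sitting in degree $0$, so the Cartan survives the contraction as a Cartan and Kostant-type regular elements persist.

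For the second case — $\g$ simple and the only vanishing Kac label is $p_0$ — I would use Vinberg–Kac theory: when $p_1,\dots,p_l$ are all positive, the subspace $\g_1$ of the $\BZ_m$-grading is large, and in fact one can check directly from the Kac diagram that $\g_1$ contains a regular element of $\g$ (a sum of simple root vectors $\sum e_{\alpha_i}$ over those $i$ with $\ov{p_i}=1$ together, if needed, with a piece from $e_{-\delta}$, is regular when the labels are arranged so that degree $1$ hits enough of $\Pi$). Then the third bullet in Section~\ref{sec-new}, namely \cite[Prop.\,5.3]{p09}, says $\g_1\cap\g_{\sf reg}\ne\varnothing\implies\ind\g_{(0)}=\rk\g$, and we are done. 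So the real content of case two is the combinatorial claim ``$p_0=0$, all other $p_i>0$'' $\Rightarrow$ ``$\g_1$ meets $\g_{\sf reg}$'', which I would verify by writing a candidate regular element as a generic element of $\g_1$, computing its centraliser, and using that a generic element of the nilradical-type space $\g_1$ has centraliser of minimal dimension exactly when degree $1$ generates enough root directions; the condition on the labels guarantees $\g_0$ is a Levi (a torus, even, in the extreme case) and $\g_1$ contains the simple root spaces, so genericity in $\g_1$ yields a regular nilpotent-plus-semisimple element.

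\textbf{Main obstacle.} I expect the hardest step to be the uniform argument in the first case — showing $\gt v=\g_{(0)}^\gamma$ for a generic $\gamma\in\g^*_{\sf reg}$, i.e.\ that no extra stabiliser directions appear in the limit — since unlike Theorem~\ref{thm:ind-inf} there is no ``central'' highest-grade piece to lean on, and one must instead invoke the isotropy of $L(\gamma)=\sum_t\g_{(t)}^\gamma$ (via \cite[Appendix]{codim3}) together with a careful dimension count that uses $\g_0=\te_0$ to pin down $\gt v\cap\g_0$. The combinatorial claim in case two is in principle routine but may require a short case analysis over the affine Dynkin types to confirm that ``only $p_0=0$'' really does push a regular element into $\g_1$.
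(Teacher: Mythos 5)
Your second-case strategy is the right one and is in fact the paper's \emph{entire} proof, for both cases: the paper first reduces to $\g$ simple via Lemma~\ref{ind-d}, then observes that $\g_0=\te_0$ forces \emph{all} Kac labels to be non-zero (Vinberg, \cite[Prop.\,17]{vi76}), so both hypotheses are subsumed by ``$p_i\ne0$ for all $i\ge1$''; it then replaces every non-zero label by $1$ — legitimate because this does not change the contracted bracket $[\,\,,\,]_{(0)}$ (a fact from \cite{MZ23} that your sketch is missing and that you genuinely need: if $p_i=2$, say, then $\g_{\alpha_i}\subset\g_{\ov{p_i}}=\g_2$, so your claim that ``$\g_1$ contains the simple root spaces'' is simply false before this normalisation) — and finally exhibits the regular nilpotent $\e=\sum_i e_i$ inside $\g_1$, using the folding $\Pi'\leftrightarrow\left<\sigma\right>$-orbits on $\Pi$ when $\vth$ is outer, so that \cite[Prop.\,5.3]{p09} applies. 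You leave precisely this combinatorial verification as ``in principle routine''; it is the content of the proof, and without the label-normalisation step the verification cannot go through as you describe it.

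The more serious problem is your treatment of the first case. The limit argument you propose ($\gt v=\lim_{t\to0}\g_{(t)}^\gamma$, $\dim\gt v=\rk\g$, $\gt v\subset\g_{(0)}^\gamma$) only yields $\dim\g_{(0)}^\gamma\ge\rk\g$, which is the trivial direction already given by semicontinuity; the theorem is exactly the statement that equality $\gt v=\g_{(0)}^\gamma$ holds for some $\gamma$, and you offer no mechanism to rule out extra stabiliser directions in the limit. (In Theorem~\ref{thm:ind-inf} the upper bound on $\ind\q_{(\infty)}$ came from an \emph{exact} computation of $\q_{(\infty)}^\xi$ at $\xi\in\q_0^*$, possible because $\q_0$ is central in $\q_{(\infty)}$; there is no analogue for $\q_{(0)}$, as you yourself note.) So as written the first case is not proved at all. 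The fix is to abandon that route and notice that $\g_0=\te_0$ puts you in the situation of the second case.
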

\begin{proof}
Making use of Lemma~\ref{ind-d}, we may assume that $\g$ is simple. 
If $\gt g_0=\te_0$, then each Kac label of $\vth$ is non-zero, see \cite[Prop.\,17]{vi76} or e.g. \cite[Sect.\,2.4]{MZ23}. 
Let $\vth_1$ be the automorphism obtained from $\vth$ by changing each non-zero Kac label of  
$\vth$ to $1$. Then $\vth_1$ may be different from $\vth$ and it may define a different
$\BZ_m$-grading of $\g$. However, the Lie brackets $[\,\,,\,]_{(0)}$ defined by $\vth$ and $\vth_1$ coincide \cite{MZ23}.
Thus we may suppose that each non-zero Kac label of $\vth$ is $1$. By our assumptions on $\vth$, only $p_0$ may not be equal to $1$.

Let $\Pi$ be a set of the simple roots of $\g$ chosen in the same way as in Section~\ref{Kac}. 
Let $e_i\in\g_{\alpha_i}$ be a non-zero root vector corresponding to a simple root $\alpha_i\in\Pi$. 
If $\vth$ is inner, then $\e=\sum_{i=1}^l e_i \in\g_1$ is a regular nilpotent element in $\g$. 
Thereby $\ind\g_{(0)}=\rk\g$ by  \cite[Prop.\,5.3]{p09}. 

Suppose that $\vth$ is outer. 
Let $\sigma$ be the diagram automorphism of $\g$ associated with $\vth$ and
$\Pi'=\{\nu_1,\ldots,\nu_r\}$  the set of simple roots of $\g^\sigma$ used in 
Section~\ref{Kac}. Then there is a bijection between $\Pi'$ and the set of $\left<\sigma\right>$-orbits on $\Pi$. 
Namely,  $\nu_j$ corresponds to the   $\left<\sigma\right>$-orbit $ \left<\sigma\right>\!{\cdot}\/\alpha_i$ of $\alpha_i\in \Pi$,  if 
$\nu_j=\alpha_i|_{\te_0}$. There is a normalisation of the root vectors $e_i=e_{\alpha_i}$ such that 
$e'_j=\sum_{\alpha\in \left<\sigma\right>{\cdot}\alpha_i} e_\alpha$ is a simple 
root vector of $\g^{\sigma}$ of the weight $\nu_j={\alpha_i}|_{\te_0}$ for each $j$ \cite[\S\,8.2]{kac}. 
If the Kac label $p_j$ is $1$, then $e'_j\in \g_1$ according to the description of $\g_1$ given in
\cite[Prop.\,17]{vi76}. Since $p_j=1$ for any $j\ge 1$, we obtain $\e=\sum_{i=1}^l e_i \in\g_1$. 
Then again  by \cite[Prop.\,5.3]{p09}, we have $\ind\g_{(0)}=\rk\g$. 
\end{proof}

The case $\g_0=\te_0$ is of particular importance, because $\trdeg\gZ(\g,\vth)=\bb(\g)$ here. 
If $\gt g_0=\te$, then 
$\gZ(\g,\vth)$ coincides with the Poisson-commutative subalgebra $\gZ(\be,\ut^-)$ constructed in \cite{bn-oy}, 
see \cite[Example\,4.5]{MZ23}. In particular, this algebra is known to be maximal~\cite[Theorem\,5.5]{bn-oy}. 
If $\gt g_0=\te_0$ is a proper subspace of $\te$, then we obtain a less studied subalgebra. 
We conjecture that it
 is still maximal. 

\section{Properties of  $\cz_\infty$ in the reductive case}
\label{sect:infty-g}

In order to understand $\cz_\infty^{\g_0}$, we study symmetric invariants of
$\tilde\g =\g_0\ltimes\g_{(\infty)}$. 
The Lie algebra $\tilde\g$ is a contraction of $\g_0\oplus\g$ associated with a map
$\tilde\vp\!:\bbk^\star\to\GL(\g_0\oplus\g)$, where $\tilde\vp(s)=\tilde\vp_s$. In order to define 
$\tilde\vp_s$, we consider a vector space decomposition 
\[
\g_0\oplus\g=\g_0^{\rm d} \oplus\g_{m-1}\oplus\ldots\oplus\g_2\oplus\g_1\oplus\g_0^{\rm ab},
\]
where the first summand $\g_0^{\rm d}$ is embedded diagonally into $\g_0\oplus\g$ and the last summand 
$\g_0^{\rm ab}$ is embedded anti-diagonally. Then set ${\tilde\vp_s}|_{\g_0^{\rm d}}=\id$, 
${\tilde\vp_s}|_{\g_{m-j}}=s^j\id$, and finally ${\tilde\vp_s}|_{\g_{0}^{\rm ab}}=s^m\id$. If we consider 
$\xi\in(\g_0^{\rm ab})^*$ as an element of $\g^*$ and of $\tilde\g^*$, then 
$\tilde\g^{\xi}=\g_0^{\xi}\oplus\g^{\xi}$ as a vector space. If $\xi$ is regular in $\g^*$, then 
$\dim\tilde\g^\xi=\ind\g_0+\ind\g$, cf.~\eqref{q0-inf}. 
Then, in view of ~\cite[\S 8.8]{kac}, which states that $\gt g_0^*\cap\g^*_{\sf reg}\ne\varnothing$, we have
$\ind\tilde\g\le\ind\gt g_0+\ind\g$. Since $\tilde\g$ is a contraction of $\g_0\oplus\g$, there is the 
equality, cf. \cite[Sect.\,6.2]{p09},  
\begin{equation}\label{eq4}
\ind\tilde\g=\ind\!(\gt g_0\oplus\g)=\rk\g_0+\rk\g. 
\end{equation}

Note that $\vth$ is also an automorphism of $\g_0\oplus\g$, acting on $\g_0\oplus\g_0$ trivially,
as well as of $\tilde\g$. However, the contraction defined by $\tilde\vp$ is not directly related to $\vth$. 

\subsection{Small rank examples} \label{smallr}

Let $\vth$ be the outer automorphism of $\gt{sl}_3$ of order $4$ with the Kac labels 
$\boldsymbol{p}=(0,1)$.
Then the corresponding $\Z_4$-grading of $\gt g=\gt{sl}_3$ looks as follows 
$$
\gt g=\gt{sl}_2\oplus\bbk^2_{I}\oplus\te_1\oplus\bbk^2_{II},
$$
where $\dim\bbk^2_{I}=\dim \bbk^2_{II}=2$ and $\te_1\subset\te$ is one-dimensional. 

Let $E_{ij}\in\gt{gl}_n$ be elementary matrices (matrix units). 

\begin{lm} \label{sl3}
Suppose $\tilde\g=\g_0\ltimes\g_{(\infty)}$ corresponds  to the pair $(\g,\vth)$ described above. 
Let $e\in(\gt g_0^{\rm ab})^*$ be a non-zero nilpotent element. 
Then $e+y\in \tilde\g^*_{\sf reg}$\, for  a generic $y\in\g_1^*$. 
\end{lm}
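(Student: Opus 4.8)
The goal is to exhibit one explicit element $\xi = e+y$ with $y\in\g_1^*$ at which $\dim\tilde\g^\xi = \ind\tilde\g = \rk\g_0+\rk\g = 1+2 = 3$; by semicontinuity, this inequality $\dim\tilde\g^\xi\le 3$ (together with the general lower bound $\dim\tilde\g^\xi\ge\ind\tilde\g$) then forces equality on a nonempty, hence generic, open subset of $e+\g_1^*$. So the whole problem reduces to a concrete stabiliser computation in the small Lie algebra $\tilde\g = \gt{sl}_2 \ltimes \g_{(\infty)}$, where $\g_{(\infty)}$ has graded pieces (in the $\BN$-grading of Proposition~\ref{prop:graded-limits}) $\g_{(\infty)}[1]=\bbk^2_{II}$, $\g_{(\infty)}[2]=\te_1$, $\g_{(\infty)}[3]=\bbk^2_{I}$, $\g_{(\infty)}[4]=\gt{sl}_2$ (the last being central in $\g_{(\infty)}$, as recorded after Proposition~\ref{prop:graded-limits}).

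\textbf{Setting up coordinates.} First I would fix matrix-unit coordinates on $\gt{sl}_3$ compatible with the $\Z_4$-grading attached to $\boldsymbol p=(0,1)$: realise the diagram involution $\sigma$ as $X\mapsto -J X^{\mathsf t} J$ so that $\g^\sigma=\gt{so}_3\cong\gt{sl}_2$, identify $\te_1$, $\bbk^2_I$, $\bbk^2_{II}$ with their explicit root/weight spaces, and use the Killing form to identify $\g_j^*$ with $\g_{m-j}=\g_{4-j}$ as in Section~\ref{sect:3}. Under the Convention in Section~\ref{subs:periodic}, $\g_0^{\rm ab}$ (the anti-diagonal copy of $\g_0$ inside $\g_0\oplus\g$) has $(\g_0^{\rm ab})^*$ naturally sitting inside $\tilde\g^*$, and a nilpotent $e\in(\g_0^{\rm ab})^*\cong\g_0^*\cong\gt{sl}_2$ corresponds to a regular nilpotent of $\gt{sl}_2$; pick the standard $e=E_{12}$-type vector. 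Then $y\in\g_1^*$ is identified with an element of $\g_{m-1}=\g_3=\bbk^2_I$ (or rather its dual slot), giving a two-parameter family $y=(y_1,y_2)$.

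\textbf{The stabiliser computation.} The heart of the argument is to write down the matrix of the form $\widehat{\xi}$ on $\tilde\g=\gt{sl}_2\oplus\bbk^2_{II}\oplus\te_1\oplus\bbk^2_I\oplus\gt{sl}_2^{\rm ab}$ (here $\dim\tilde\g = 3+2+1+2+3 = 11$) and compute its kernel for $\xi=e+y$. I expect to use two structural facts to cut this down: (i) $\xi$ restricted to $\g_0^{\rm ab}$ is the regular nilpotent $e$ of $\gt{sl}_2$, whose $\gt{sl}_2$-centraliser is $1$-dimensional (spanned by $e$ itself, which lies in the centre of $\g_{(\infty)}$), contributing essentially the $1$ in $\rk\g_0=1$; (ii) the part of $\widehat\xi$ pairing the ``$\g$-side'' graded pieces is governed by $y\in\g_1^*$ exactly as in the formula $\g_j^\xi=\{y'\in\g_j\mid \xi([y',\g_{m-j}])=0\}$ from~\eqref{q0-inf} for $\g_{(\infty)}$ itself, plus cross terms coming from the $\g_0^{\rm d}$-action. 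Concretely, the brackets $[\bbk^2_{II},\bbk^2_I]\subset\te_1$, $[\bbk^2_{II},\te_1]\subset\bbk^2_I$, $[\bbk^2_I,\bbk^2_I]\subset\gt{sl}_2$, $[\bbk^2_{II},\bbk^2_{II}]\subset\gt{sl}_2$ must be pinned down, and then one checks that for generic $(y_1,y_2)$ the relevant pairings are non-degenerate enough that the kernel has the expected dimension $3$ — two dimensions matching $\rk\g=\rk\gt{sl}_3=2$ and one matching $\rk\g_0=1$. The main obstacle will be bookkeeping: keeping track of the mixed contributions of the diagonal $\g_0^{\rm d}=\gt{sl}_2$-action on all the $\g_i$ simultaneously with the nilpotent value $e$ on the anti-diagonal piece, and verifying that the genericity condition on $y$ is a nonempty Zariski-open condition (an explicit nonvanishing of some polynomial in $y_1,y_2$, which one exhibits by evaluating at a single convenient $y$). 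Once a single good $\xi$ is found, the semicontinuity argument closes the proof immediately.
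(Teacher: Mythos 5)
Your strategy is exactly the paper's: since $\ind\tilde\g=\rk\g_0+\rk\g=3$ by \eqref{eq4}, it suffices to exhibit a single $\xi=e+y$ with $\dim\tilde\g^\xi\le 3$ and then invoke openness of the regular locus. The paper does precisely this, fixing $e=\kappa(E_{13},\cdot)$ and the explicit choice $y=\kappa(E_{21}-E_{32},\cdot)$, and then computing $\tilde\g^\gamma$ by hand.

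However, your proposal stops at the point where the lemma actually begins: the stabiliser computation, which is the entire content of the statement, is described only as ``bookkeeping'' to be done later, with no candidate $y$ exhibited and no verification that the kernel of $\hat\xi$ really has dimension $3$ rather than more. That verification is not automatic --- it depends delicately on which $\gt{sl}_2$ sits inside $\gt{sl}_3$ as $\g_0$. Here $\g_0=\g^\vth$ is \emph{not} $\g^\sigma=\gt{so}_3$ but the long-root copy $\left<E_{11}-E_{33},E_{13},E_{31}\right>_{\bbk}$ (for the $\gt{so}_3$ embedding the nilpotent $e$ would be regular in $\g$ and the whole analysis changes, as the proof of Theorem~\ref{inf-c2} points out); your setup via $\g^\sigma=\gt{so}_3$ leaves this crucial identification ambiguous. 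There is also a labelling slip: $y\in\g_1^*$ is identified via $\kappa$ with an element of $\g_{m-1}=\g_3=\bbk^2_{II}$, not $\bbk^2_{I}$ (contradicting your own correct list of the graded pieces of $\g_{(\infty)}$). So the plan is sound and coincides with the paper's, but as written it is an outline with the decisive step missing.
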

\begin{proof}
We may suppose that $\gt g_0=\gt{sl}_2$ is embedded into $\gt g$ as 
$\left<E_{11}-E_{33},E_{13},E_{31}\right>_{\bbk}$. Then 
$\bbk^2_{I}=\left<E_{12}-E_{23},E_{21}+E_{32}\right>_{\bbk}$, 
$\bbk^2_{II}=\left<E_{12}+E_{23},E_{21}-E_{32}\right>_{\bbk}$,
$\te=\bbk(E_{11}-2E_{22}+E_{33})$, and $e$ is identified with 
$\kappa(E_{13},\,)$. As $y$ we choose $\kappa(E_{21}-E_{32},\,)$. Set $\gamma=y+e$ and let 
$\pi$ be the Poisson tensor of $\tilde\g$. We identify $\gt g_0^{\rm d}$ and $\gt g_0^{\rm ab}$
with $\gt{sl}_2$ in a natural way. 

Observe that $\pi(\gamma)(\te_1,\bbk(E_{12}+E_{23}))\ne 0$ and that 
\begin{equation} \label{sl3-s3}
\pi(\gamma)(\te_1+\bbk(E_{12}+E_{23}),\g_0^{\rm d}+\bbk^2_{I}+\bbk(E_{21}-E_{32})+ \g_0^{\rm ab})=0.
\end{equation}
Therefore $\tilde\g^\gamma\subset  \g_0^{\rm d}+\bbk^2_{I}+\bbk(E_{21}-E_{32})+ \g_0^{\rm ab}$. 
Clearly $E_{13}\in\g_0^{\rm ab}$ belongs to $\tilde\g^\gamma$. Now suppose that 
$x=x_0+x_1+x_3+x_4$ with $x_0\in\g_0^{\rm d}$, $x_1\in\bbk(E_{21}-E_{32})$, 
$x_3\in\bbk^2_{I}$, $x_4\in\g_0^{\rm ab}$ belongs to $\tilde\g^\gamma$. 
We may assume that $x_4\in\bbk E_{31}+\bbk(E_{11}-E_{33})$. 
Then 
\begin{align} \label{sl3-s}
\pi(\gamma)(x,\bbk^2_{I})=\pi(\gamma)(x_0+x_1,\bbk^2_{I})=0 , \ 
\pi(\gamma)(x,\g_0^{\rm ab})=\pi(\gamma)(x_0,\g_0^{\rm ab})=0, 
\enskip \text{ and } \\
\pi(\gamma)(x,\g_0^{\rm d})=\pi(\gamma)(x_3+x_4,\g_0^{\rm d})=0. \label{sl3-s2}
\end{align}
Thereby $x_0\in\bbk E_{13}$, which implies $x_0+x_1\in\bbk(E_{13}+E_{21}-E_{32})$. 
Since $\pi(\gamma)(x_4,E_{13})=0$ and $\pi(\gamma)(E_{21}+E_{32},E_{13})\ne 0$ for $E_{13}\in\g_0^{\rm d}$, we obtain  
$x_3\in\bbk(E_{12}-E_{23})$. Looking at the commutators with $E_{11}-E_{33}\in\g_0^{\rm d}$, we obtain $x_3+x_4\in\bbk(E_{12}-E_{23}+E_{31})$. 

We colnclude that $\dim\tilde\g^\gamma\le 3$. It cannot be smaller than $3=\ind\tilde\g$, see~\eqref{eq4}. Thus 
$\gamma$ is a regular point and the same holds for the elements  of a non-empty open subset of 
$\g_1^*$. 
\end{proof}

Next we  generalise Lemma~\ref{sl3} to periodic automorphisms of order $4n$ of direct sums  $\gt{sl}_3^{\oplus n}$. 
Let now $\tilde\vth$ be the outer automorphism of $\gt h=\gt{sl}_3$ of order $4$ with the Kac labels $(1,0)$. 
Set $\gt g=\gt h^{\oplus n}$ and let $\vth\in\Aut(\g)$ be obtained from $\tilde\vth$ and a cyclic permutation of the summands 
of $\g$, see~\eqref{tildet}.

\begin{lm} \label{sl3-n}
Suppose $\tilde\g=\g_0\ltimes\g_{(\infty)}$ corresponds  to the pair $(\g,\vth)$ described above. 
Let $e\in(\gt g_0^{\rm ab})^*$ be a non-zero nilpotent element. 
Then $e+y\in \tilde\g^*_{\sf reg}$\, for  a generic $y\in\g_1^*$.  
\end{lm}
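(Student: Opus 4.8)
The plan is to reduce Lemma~\ref{sl3-n} to the already-established case $n=1$ of Lemma~\ref{sl3} by exploiting the block structure of $\tilde\g$ coming from the cyclic permutation of the $n$ summands. First I would set up coordinates: write $\g=\h^{\oplus n}$ with $\h=\gt{sl}_3$, record that $\g_0\simeq\h^{\tilde\vth}$ embedded diagonally, and unwind the description of the $\BZ_{4n}$-grading of $\g$ in terms of the $\BZ_4$-grading of $\h$ and the permutation. Concretely, for $i=1,\dots,4n$ the component $\g_{(\infty)}[i]$ is a single copy of some $\h_j$ sitting in one of the $n$ slots, and the whole Lie algebra $\tilde\g=\g_0\ltimes\g_{(\infty)}$ is built from $n$ ``strands'', each strand being (a shifted copy of) the $\tilde\g$ attached to $(\h,\tilde\vth)$, which are then glued along the common diagonal $\g_0$ at the top. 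The element $e+y$ has $e\in(\g_0^{\rm ab})^*$ and $y\in\g_1^*$; I would arrange that $y$ has a generic nonzero component in the copy of $\h_1$ that lives in the ``last'' slot (the one that gets mapped to slot $1$ by $\vth$) and is generic elsewhere as well.

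The key step is a stabiliser computation: I want to show $\dim\tilde\g^{e+y}\le \ind\tilde\g = n\rk\h+\rk\g_0 = n+1$ (using \eqref{eq4} and $\rk\g_0=\rk\h^{\tilde\vth}=1$), since the reverse inequality is automatic. The plan is to compute $\pi(\gamma)$ on the graded pieces exactly as in the proof of Lemma~\ref{sl3}. The bracket on $\tilde\g$ only pairs $\g_{(\infty)}[i]$ with $\g_{(\infty)}[4n-i]$ (landing in the central $\g_0$), plus the $\g_0$-action on everything; because of the cyclic shift, the pairing $\h_j$-in-slot-$k$ against $\h_{4-j}$-in-slot-$k'$ is nonzero only for one matching pair of slots. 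So the Poisson tensor $\pi(\gamma)$ is block lower-triangular along the $n$ strands, and one reads off, strand by strand, that the only surviving stabiliser directions are: one copy of the $3$-dimensional stabiliser from Lemma~\ref{sl3} per strand — wait, that would give $3n$, too many — so more care is needed. The correct count is that each strand contributes $\rk\h=1$ ``generic'' stabiliser direction coming from its copy of $\h^\gamma$, plus the $\rk\g_0=1$ extra direction at the diagonal top, as in the passage around \eqref{dimv}; the extra dimensions seen in the $n=1$ lemma collapse because, for $n\ge 2$, the shift forces the slot-$k$ piece to be paired against a slot-$k'$ piece with $k'\ne k$, removing the degeneracy. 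I would make this precise by choosing a subregular/regular $h$-type element of $\g_0$ (cf. the element $E_{11}-E_{33}$ in Lemma~\ref{sl3}) and tracking commutators exactly as in \eqref{sl3-s3}--\eqref{sl3-s2}, but now indexed by the slot.

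The main obstacle, and the step I expect to require the most care, is the bookkeeping of the cyclic shift: making sure that the ``$+1$'' shift in $\tilde\vth$ (the Kac labels $(1,0)$, as opposed to $(0,1)$ in Lemma~\ref{sl3}) together with the rotation of slots produces exactly the claimed pairing pattern, and that the regular element $y\in\g_1^*$ can be chosen so that the relevant off-diagonal blocks of $\pi(\gamma)$ have full rank. A clean way to organise this is induction on $n$: peel off one strand by restricting $\gamma$ appropriately, apply Lemma~\ref{sl3} (or its minor variant for labels $(1,0)$) to that strand, and use \eqref{q0-inf}-style stabiliser decompositions to glue. Once $\dim\tilde\g^{e+y}\le n+1$ is verified on a nonempty open subset of $\g_1^*$, the conclusion $e+y\in\tilde\g^*_{\sf reg}$ follows, since that open set meets $\tilde\g^*_{\sf reg}$ and the dimension is then forced to equal $\ind\tilde\g$ by \eqref{eq4}.
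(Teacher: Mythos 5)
There is a genuine numerical error at the heart of your plan. You set the target bound $\dim\tilde\g^{e+y}\le \ind\tilde\g=n\rk\h+\rk\g_0=n+1$, but $\rk\gt{sl}_3=2$, so \eqref{eq4} gives $\ind\tilde\g=\rk\g+\rk\g_0=2n+1$. An upper bound of $n+1$ is not merely unproved, it is impossible: the index is always a lower bound for stabiliser dimensions, so no $\gamma$ can have $\dim\tilde\g^{\gamma}=n+1$ once $n\ge 1$. This error then propagates into your discussion of the strand-by-strand count. You worry that ``one copy of the $3$-dimensional stabiliser from Lemma~\ref{sl3} per strand'' gives too much and invoke a ``collapse of the extra dimensions'' for $n\ge 2$; but in the $n=1$ case the stabiliser is exactly $3=2\cdot 1+1=\ind\tilde\g$-dimensional, so there is no excess to collapse. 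The correct picture is that $\tilde\g^{e+y}$ has dimension exactly $2n+1$, roughly two directions per level $k=0,\dots,n-1$ plus one central direction; the paper exhibits these explicitly as $E_{13}\bar t^{4n}$, $E_{13}\bar t^{4k}+(E_{21}-E_{32})\bar t^{4k+1}$, and $(E_{12}-E_{23})\bar t^{4k+3}+E_{31}\bar t^{4k+4}$.

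Your structural intuition (exploit the cyclic shift to reduce the computation to the $n=1$ pattern, choose $y$ generically in the single copy of $\h_1$, and check that only matching graded pieces pair nontrivially under $\pi(\gamma)$) is essentially the paper's approach. The device that makes the bookkeeping painless, and which you would need to make your ``strands'' precise, is to identify $\g_0^{\rm d}$ with $\gt h_0$, $\g_0^{\rm ab}$ with $\gt h_0\bar t^{4n}$, and $\g_{4k+i}$ with $\gt h_i\bar t^{4n-4k-i}$ inside a truncated twisted current algebra with $[x\bar t^j,y\bar t^u]=[x,y]\bar t^{j+u}$ and $\bar t^{4n+1}=0$; then $\gamma=e+y$ kills every bracket with $j+u\notin\{4n-1,4n\}$ and the stabiliser can simply be written down by iterating the computation of Lemma~\ref{sl3}, with no induction or ``gluing'' step needed. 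To repair your argument, replace the target $n+1$ by $2n+1$ and carry out the commutator computation in this model; the rest of your logic (regularity on a nonempty open subset of $\g_1^*$ once the bound is achieved) is sound.
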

\begin{proof}
It is convenient to identify $\g_0^{\rm d}$ with $\gt h_0$,   $\gt g_0^{\rm ab}$ with $\gt h_0 \bar t^{4n}$, 
each $\gt g_{4k+i}$, where $0\le i\le 3$, with 
$\gt h_i \bar t^{4n-4k-i}$, assuming that $\bar t^{4n+1}=0$. Then 
$[x\bar t^j, y\bar t^u]=[x,y]\bar t^{j+u}$ in $\tilde\g$ for $\tilde\vth$-eigenvectors $x,y\in\gt h$.

We understand $e$ as a linear function such that $e(E_{13}\bar t^{4n})=e((E_{11}-E_{33})\bar t^{4n})=0$ and 
$e(E_{31}\bar t^{4n})=1$. Modelling the proof of Lemma~\ref{sl3},
take $y$ such that $y((E_{12}-E_{23})\bar t^{4n-1})=2$ and $y((E_{21}+E_{32})\bar t^{4n-1})= 0$. 
Set $\gamma=e+y$. 
Note that $\gamma([x\bar t^j, y\bar t^u])=0$, if $j+u\not\in\{4n-1,4n\}$. Iterating computations in the spirit of \eqref{sl3-s3},~\eqref{sl3-s},~\eqref{sl3-s2}, we obtain 
$$
\tilde\g^{\gamma}=\left<E_{13}\bar t^{4n}, \ E_{13}\bar t^{4k}+(E_{21}-E_{32})\bar t^{4k+1}, \
(E_{12}-E_{23})\bar t^{4k+3}+E_{31}\bar t^{4k+4} 
 \mid 0\le k < n    \right>_{\bbk}.
$$
In particular, $\dim\tilde\g^\gamma=1+2n=\ind\tilde\g$ and $\gamma\in\tilde\g^*_{\sf reg}$.
The same holds for the elements  of a non-empty open subset of 
$\g_1^*$. 
\end{proof}

\subsection{General computations}
Let us begin with a  technical lemma. 

\begin{lm}\label{sum}
Let $\gt q$ be a finite-dimensional Lie algebra. 
Suppose we have $\eta\in\gt q^*$ and $\bar y\in(\gt q^\eta)^*$ such that 
$\dim(\gt q^\eta)^{\bar y}=\ind\gt q$. Then for any extension $y$ 
of $\bar y$ to a linear function on $\gt q$, there is $s\in\bbk^\star$ such that 
$s\eta+y\in\gt q^*_{\sf reg}$.
\end{lm}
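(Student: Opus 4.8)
The statement compares stabilisers for the fixed Lie bracket on $\gt q$ but for a varying linear functional $s\eta+y$, so it is naturally a semicontinuity argument. The plan is to view $s\eta+y$ as a family of points in $\gt q^*$ parametrised by $s\in\bbk$ and to prove that for generic $s$ the stabiliser $\gt q^{s\eta+y}$ has dimension $\le\ind\gt q$; since $\ind\gt q$ is the minimal value, this forces equality and hence $s\eta+y\in\gt q^*_{\sf reg}$. Equivalently, in terms of the Poisson tensor $\pi=\pi_1$ of $\gt q$, I want to show $\rk\pi(s\eta+y)\ge\dim\gt q-\ind\gt q$ for generic $s$, which is an open condition on $s$; it therefore suffices to exhibit a single value of $s$ (or to verify the condition at a convenient degeneration) where the rank is at least this large.

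First I would set up the filtration of $\gt q$ by the stabiliser $\gt q^\eta$: write $\pi(\eta)$ for the Kirillov form $\hat\eta$ and recall $\ker\hat\eta=\gt q^\eta$. The linear form $\bar y\in(\gt q^\eta)^*$ has, by hypothesis, the property that its Kirillov form on the Lie algebra $\gt q^\eta$ has kernel $(\gt q^\eta)^{\bar y}$ of dimension exactly $\ind\gt q$. The key point is the behaviour of $\hat{s\eta+y}=s\hat\eta+\hat y$ as $s\to\infty$: rescaling, the $s\to\infty$ limit of $\tfrac1s\hat{s\eta+y}$ is $\hat\eta$, whose rank is $\dim\gt q-\dim\gt q^\eta$, and the next-order term of $\hat{s\eta+y}$, restricted to $\gt q^\eta=\ker\hat\eta$, is precisely $\hat y|_{\gt q^\eta\times\gt q^\eta}$, i.e. the Kirillov form of $\bar y$ on the Lie algebra $\gt q^\eta$. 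So I would choose a complementary subspace to $\gt q^\eta$ in $\gt q$ on which $\hat\eta$ is nondegenerate, pass to block form, and observe that for large $s$ the rank of $s\hat\eta+\hat y$ is at least $(\dim\gt q-\dim\gt q^\eta)+\rk(\hat y|_{\gt q^\eta})=(\dim\gt q-\dim\gt q^\eta)+(\dim\gt q^\eta-\ind\gt q)=\dim\gt q-\ind\gt q$. This is the desired lower bound, and it holds for all but finitely many $s$, in particular for some $s\in\bbk^\star$.

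A clean way to package the block computation, avoiding an explicit Schur-complement manipulation, is the following: choose a basis $e_1,\dots,e_k$ of a subspace $W$ with $\gt q=W\oplus\gt q^\eta$ and $\hat\eta|_{W\times W}$ nondegenerate (possible since $\hat\eta$ induces a symplectic form on $\gt q/\gt q^\eta$, pulled back to any complement modulo $\gt q^\eta$); then for generic $s$ the $k\times k$ block of $\hat{s\eta+y}$ indexed by $W$ is $s\hat\eta|_{W}+\hat y|_{W}$, which is invertible, and the Schur complement of this block inside the full matrix of $\hat{s\eta+y}$ equals, to leading order in $1/s$, the form $\hat y|_{\gt q^\eta\times\gt q^\eta}$ whose rank is $\dim\gt q^\eta-\ind\gt q$. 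Hence $\rk\hat{s\eta+y}=k+(\dim\gt q^\eta-\ind\gt q)=\dim\gt q-\ind\gt q$ for all but finitely many $s$, giving $\dim\gt q^{s\eta+y}\le\ind\gt q$, hence $=\ind\gt q$.

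The main obstacle is the bookkeeping showing that the ``second-order'' term of the Schur complement is exactly the Kirillov form of $\bar y$ on $\gt q^\eta$ and not some unrelated form: one must check that the cross terms $\hat y(W,\gt q^\eta)$ and the $s$-dependence combine so that, after inverting the $W$-block, the leading correction on $\gt q^\eta$ is $\hat y|_{\gt q^\eta\times\gt q^\eta}=\widehat{\bar y}$, independently of the choice of extension $y$ of $\bar y$. This is where the hypothesis ``$\bar y\in(\gt q^\eta)^*$ and $\dim(\gt q^\eta)^{\bar y}=\ind\gt q$'' is used, and it is the crux: the higher-order corrections only shift the Schur complement by $O(1/s)$, so its rank stabilises at $\rk\widehat{\bar y}$ for large $s$. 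Everything else (openness of the rank condition, $\ind\gt q$ being the minimum, existence of $W$) is routine.
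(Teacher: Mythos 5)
Your proof is correct and follows essentially the same route as the paper: decompose $\gt q=\gt m\oplus\gt q^\eta$ with $\hat\eta$ nondegenerate on the complement, observe that $\hat y$ restricted to $\gt q^\eta\times\gt q^\eta$ is exactly $\widehat{\bar y}$ (of rank $\dim\gt q^\eta-\ind\gt q$) independently of the extension, and conclude that $\rk(s\hat\eta+\hat y)\ge\dim\gt q-\ind\gt q$ for all but finitely many $s$. The only difference is the packaging: the paper avoids the Schur complement by restricting to $\gt m\oplus\gt m_1$, where $\gt m_1\subset\gt q^\eta$ is a subspace on which $\widehat{\bar y}$ is nondegenerate, and noting that the determinant of the resulting block matrix is a polynomial in $s$ with nonzero leading coefficient $\det(\hat\eta|_{\gt m})\det(\widehat{\bar y}|_{\gt m_1})$.
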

\begin{proof}
Write $\gt q=\gt m\oplus\gt q^\eta$, where $\hat\eta$ is non-degenerate on $\gt m$. 
Let $\gt m_1\subset\gt q^\eta$ be a subspace of dimension $\dim\gt q^\eta-\ind\gt q$ such that 
$\hat{\bar y}$ is non-degenerate on $\gt m_1$. By the construction  $\dim(\gt m\oplus\gt m_1)=\dim\gt q-\ind\gt q$. 
We extend $\bar y$ to a linear function $y$ on $\gt q$ in some way, which is of no importance. 
Choose 
bases in $\gt m$, $\gt m_1$, take their union,  and let 
$M=\begin{pmatrix} sA+C & B \\ -B^t & A_1\end{pmatrix}$ be the matrix of $s\hat\eta+\hat y$, where $s\in\bbk$, 
in this basis; here $A$ is the matrix of $\hat\eta|_{\gt m}$ and $A_1$ is the matrix of $\hat{\bar y}|_{\gt m_1}$.
Then  
$$
\det(M)=s^{\dim m}\det(A)\det(A_1)+(\,\text{terms of smaller degree in $s$}\,). 
$$
Since $\det(A)\det(A_1)\ne 0$, for almost all $s\in\bbk$, we have $\det(M)\ne 0$. Whenever this happens, 
$\rk(s\hat\eta+\hat y)\ge \dim\gt q-\ind\gt q$ and $s\eta+y\in\gt q^*_{\sf reg}$. 
\end{proof}

\begin{thm} \label{inf-c2}
The algebra $\tilde\g$ has the {\sl codim}--$2$ property.
\end{thm}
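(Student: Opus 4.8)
To prove that $\tilde\g = \g_0 \ltimes \g_{(\infty)}$ has the {\sl codim}--$2$ property, I would show that the singular set $\tilde\g^*_{\sf sing}$ has codimension at least $2$ in $\tilde\g^*$. Since $\ind\tilde\g = \rk\g_0 + \rk\g$ by \eqref{eq4}, I need to verify that the locus where $\dim\tilde\g^\xi > \rk\g_0+\rk\g$ is small. The natural strategy is to stratify $\tilde\g^*$ according to the projection $\pr\colon\tilde\g^* \to \g^*$ dual to the inclusion $\g = \g_{(\infty)}\hookrightarrow\tilde\g$, noting that $\g_0\subset\g_{(\infty)}$ is central, so $\g_0^* = \Ann(\bigoplus_{i\ge 1}\g_i)$ sits inside $\tilde\g^*$ as the ``anti-diagonal'' part $(\g_0^{\rm ab})^*$ identified in Section~\ref{sect:infty-g}. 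For a point $\gamma\in\tilde\g^*$ whose image in $\g^*$ is regular, the earlier discussion gives $\dim\tilde\g^\gamma = \ind\g_0 + \ind\g$ exactly when the restriction of the relevant functional to $\g_0$ is regular in $\g_0^*$; the failure set for both conditions is of codimension $\ge 2$ because $\g$ has the {\sl codim}--$3$ property (Kostant) and $\g_0$ is reductive.

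**Key steps.** First I would set up the decomposition $\tilde\g = \g_0^{\rm d}\oplus\bigoplus_{j=1}^{m-1}\g_{m-j}\oplus\g_0^{\rm ab}$ and compute $\tilde\g^\xi$ for a general $\xi$, using the formula $\tilde\g^\xi = \g_0^\xi\oplus\g^\xi$ when $\xi\in(\g_0^{\rm ab})^*$, together with Lemma~\ref{sum} to promote such estimates to nearby points. Second, I would cover $\tilde\g^*_{\sf sing}$ by pieces indexed by (i) points $\gamma$ whose $\g^*$-component is already singular in $\g^*$ --- this pulls back a codimension-$\ge 3$ set, and the fibres of $\pr$ only add a bounded dimension of slack, keeping the codimension $\ge 2$; and (ii) points $\gamma$ whose $\g^*$-component is regular but for which $\dim\tilde\g^\gamma$ still jumps --- here I would argue, via Lemma~\ref{sum} applied with $\eta$ the $\g$-component and $\bar y$ its restriction, that the jump forces $\bar\gamma|_{\g_0}\in(\g_0^*)_{\sf sing}$, a codimension-$\ge 3$ condition inside $\g_0^*$ (since $\g_0$ is reductive, hence has {\sl codim}--$3$ by Kostant again), which again cuts out a codimension-$\ge 2$ subset of $\tilde\g^*$ after accounting for the $\g_1\oplus\dots$ directions that are free. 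Third, I would assemble these to conclude $\codim\tilde\g^*_{\sf sing}\ge 2$. The small-rank computations (Lemmas~\ref{sl3}, \ref{sl3-n}) presumably supply the base cases or a transversality input showing that generic $e+y$ with $e$ nilpotent in $(\g_0^{\rm ab})^*$ is regular, which is exactly what is needed to control the stratum where the $\g^*$-component is nilpotent/degenerate.

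**Main obstacle.** The delicate point is the second stratum: bounding the dimension of the set of $\gamma\in\tilde\g^*$ with $\pr(\gamma)\in\g^*_{\sf reg}$ but $\dim\tilde\g^\gamma > \ind\g_0+\ind\g$. One must show that such a jump is genuinely governed by the singularity of the induced functional on $\g_0$, and then count dimensions carefully: the fibre directions $\bigoplus_{j\ge 1}\g_j^*$ are ``free'' and could in principle erode the codimension gained from $(\g_0^*)_{\sf sing}$. Making the fibrewise estimate precise --- i.e., showing the bad locus in each fibre over $\g^*_{\sf reg}$ is cut out by the pullback of $(\g_0^*)_{\sf sing}$ under the restriction map $\tilde\g^*\to\g_0^*$ and hence has the expected codimension --- is where the real work lies, and it is likely here that the structure of the grading (e.g., $\kappa(\g_i,\g_j)=0$ unless $i+j\equiv 0$) and the reductivity of $\g_0$ must be used in an essential way. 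A secondary technical nuisance is handling the boundary case $\g_0 = \te_0$ (abelian), where $\ind\g_0=\dim\g_0$ and the argument must degenerate gracefully.
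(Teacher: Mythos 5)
Your strategy does not engage with the actual difficulty, and the covering claim at its core is unproven and almost certainly false. You propose to cover $\tilde\g^*_{\sf sing}$ by (i) points whose $\g^*$-component is singular in $\g^*$ and (ii) points whose restriction to $\g_0$ is singular in $\g_0^*$, each of codimension $\ge 2$. But the identity $\tilde\g^\xi=\g_0^\xi\oplus\g^\xi$ that you invoke holds \emph{only} for $\xi$ supported on $\g_0^{\rm ab}$ (i.e.\ vanishing on $\g_0^{\rm d}\oplus\bigoplus_{j\ge1}\g_j$); for a general $\gamma$ there is no such formula, because $\tilde\g$ is a contraction and its bracket is not that of $\g_0\oplus\g$. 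Consequently nothing forces a singular $\gamma$ with regular $\g^*$-image to have singular restriction to $\g_0$, and your second stratum need not absorb the rest of the singular set. The relevant ``good'' condition, established at the start of the paper's proof via the contraction automorphisms $\tilde\vp_s$ and the limit $\lim_{s\to0}s^m\tilde\vp_s(\xi)=\xi|_{\g_0^{\rm ab}}$, is that the $\g_0^{\rm ab}$-component of $\xi$ be regular \emph{in $\g^*$} (as an element of $\g_0\subset\g$). The failure locus of that condition is the set of elements of $\g_0$ that are not $\g$-regular, which is typically a \emph{divisor} $D_0$ in $(\g_0^{\rm ab})^*$, contributing a full divisor $D_0\times\Ann(\g_0^{\rm ab})$ of potentially singular points. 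Your codimension count never confronts this locus; it is exactly the case your stratification misses.

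The paper's proof is organized around precisely this issue: it assumes a divisor $D\subset\tilde\g^*_{\sf sing}$ exists, shows it must have the product form $D_0\times\Ann(\g_0^{\rm ab})$ with $D_0$ a $G_0$-stable conical divisor, reduces via the categorical quotient to a root hyperplane $\bar\alpha=0$ in $\te_0^*$, and then rules the divisor out by exhibiting regular points of $\tilde\g^*$ over generic $\eta$ in that hyperplane. This last step is where all the machinery you relegate to ``base cases'' is actually load-bearing: the case split on whether $\eta$ is regular in $\g_0$, the folding analysis of Section~\ref{folding} identifying $\gt{sl}_3$ as the only problematic simple factor, Lemmas~\ref{sl3} and \ref{sl3-n} producing regular covectors $e+y$ with nilpotent $\g_0^{\rm ab}$-part, Theorem~\ref{ind-au} (a new result, not Kostant) in Case~2, and Lemma~\ref{sum} to propagate regularity from $\gt f$ to $\tilde\g$. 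None of this can be replaced by a pullback-of-codimension argument from $\g^*_{\sf sing}$ and $(\g_0^*)_{\sf sing}$, so the proposal as written has a genuine gap at its main step.
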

\begin{proof}
The subalgebra $\g_0$ is reductive and it contains a semisimple element $x$ that is regular in $\g$, see  e.g.~\cite[\S 8.8]{kac}.
Thus, there is $\eta\in(\g_0^{\rm ab})^*\subset\tilde\g^*$ that corresponds to a regular element of $\g$ and here 
$\dim\tilde\g^\eta=\rk\g+\rk\g_0=\ind\tilde\g$, cf.~\eqref{q0-inf}.  

Take now $\xi\in\tilde\g^*$ such that 
$\bar\xi=\xi|_{\g_0^{\rm ab}}$ corresponds to a regular element of $\g$.   
Note that $\lim\limits_{s\to 0} s^m\tilde\vp_s(\xi)=\bar\xi$. Since $\bar\xi\in\tilde\g^*_{\sf reg}$,  we have 
$\vp_s(\xi)\in\tilde\g^*_{\sf reg}$ for almost all $s$. 
Each map $\tilde\vp_s\!:\tilde\g\to\tilde\g$ is an 
automorphism of the Lie algebra $\tilde\g$.  Thus $\xi\in\tilde\g^*_{\sf reg}$. 

Assume that $D\subset\tilde\g^*_{\sf sing}$ is a divisor in $\tilde\g^*$. Then 
$\{\bar \xi\mid \xi\in D\}\subset(\g_0^{\rm ab})^*$ lies in a proper closed subset of $(\gt g_0^{\rm ab})^*$. 
Thereby $D=D_0\times\Ann(\gt g_0^{\rm ab})$ for some divisor $D_0\subset(\gt g_0^{\rm ab})^*$.
Since $\tilde\g^*_{\sf sing}$ is a $G_0$-stable subset of $\tilde\g^*$, the divisor $D_0$ is $G_0$-stable as well. 
Since $\tilde\g^*_{\sf sing}$ is a conical subset, $D_0$ is the zero set of a homogeneous polynomial. 

A generic fibre of the categorical  quotient $\varpi\!:(\gt g_0^{\rm ab})^*\to (\gt g_0^{\rm ab})^*/\!\!/G_0$ consists of a single
$G_0$-orbit. Since $D_0$ is $G_0$-stable, it follows that   $\dim\overline{\varpi(D_0)}=\rk\g_0-1$. 
Then also  $\varpi^{-1}(y)\subset D_0$ for any $y\in\varpi(D_0)$, because each fibre of $\varpi$ is irreducible.  

The intersection $D_0\cap\gt t_0^*$ does not contain regular in $\g$ elements, thereby  it is 
a subspace of codimension $1$, the zero set of $\bar\alpha=\alpha|_{\te_0}$ for  some  $\alpha\in\Delta^+$.
Let $\eta\in D_0\cap\gt t_0^*$ be a generic point. Then $\eta$ is either regular or subregular in $\g_0$.
Furthermore, 
if  $\beta\in\Delta$ and $\beta(\eta)=0$, then $\bar\beta=\beta|_{\te_0}\in\bbk\bar\alpha$. 
Set $\gt f=[\g^{\eta},\g^{\eta}]$. Note that $\gt f$ is semisimple and that 
 the pair $(\gt f,\gt f^\vth)$  is indecomposable. 

($\diamond$) Case 1. 
Suppose that $\eta$ is not regular in $\g_0$.  Then necessarily $[\g_0^{\eta},\g_0^{\eta}]=\gt{sl}_2$. 
Let $e\in[\g_0^{\eta},\g_0^{\eta}]$ be a non-zero nilpotent element, which we regard also as a point in $(\g_0^{\rm ab})^*$. Then
$\eta+e\in D_0$, since 
$\varpi(\eta+e)=\varpi(\eta)$. 
Suppose $\{\beta\in\Delta^+ \mid \beta(\eta)=0\}=\{\vth^k(\alpha) \mid k\ge 0\}$.
Then
$\vth^k(\alpha)+\vth^{k'}(\alpha)$ with $k,k'\ge 0$ is never a root of $\g$, the subalgebra 
$\gt f$ is a direct sum of copies of $\gt{sl}_2$, the nilpotent element $e$ is regular in $\gt f$, and 
$\eta+e\in\g^*_{\sf reg}$.
This is a contradiction. 

Suppose that there is no equality for the sets above. Then $\vth|_{\gt f}$ is constructed from  an outer automorphism $\tilde\vth$ of a simple 
Lie algebra $\gt h$.  From Section~\ref{folding} we know that $\gt h=\gt{sl}_3$. 
Next $\gt h^{\tilde\vth}\simeq\gt{sl}_2$. The fixed points subalgebra can be embedded in two different ways, as $\gt{so}_3$ or as $\left<E_{11}-E_{33},E_{13},E_{31}\right>_{\bbk}$. For the first embedding, $e$ is still regular in $\gt f$, a contradiction. 

Suppose that $\gt h^{\tilde\vth}=\left<E_{11}-E_{33},E_{13},E_{31}\right>_{\bbk}$.
The twisted affine Dynkin digram of type ${\sf A}_2$ has two nodes. Our choice of $\gt h^{\tilde\vth}$ implies that 
$\tilde\vth$ is the automorphism considered in Lemma~\ref{sl3}.
Then by Lemma~\ref{sl3-n}, there is $y\in\gt f_1^*\simeq \gt f_{i}\subset\gt g_j$, where $0<j<m$  and $i$ depends on $\vth|_{\gt f}$, such that 
$e+y\in\tilde{\gt f}^*_{\sf reg}$.  Note that $s\eta+e+y\in D$ for any $s\in\bbk^\star$. 

The vector spaces $\tilde\g^\eta$ and $(\g_0\oplus\g)^\eta$ coinside.  
We have $\tilde\g^\eta=\tilde{\gt f}+\te_0+\te$, where 
$\tilde\g^\eta$ and $\tilde{\gt f}$ are contractions of $\g^\eta$ and $\gt f\oplus\gt f^{\vth}$, respectively, given by 
the restrictions of $\tilde\vp$. 
We extend $e$ and $y$ to linear functions on $\tilde\g^\eta$ and $\tilde\g$ keeping the same symbols for the extensions. 
Then 
\begin{equation}\label{tilde}
\dim(\tilde\g^\eta)^{e+y}=(\rk\g+\rk\g_0)-(\rk\gt f+\rk\gt f_0)+\ind\tilde{\gt f}=\ind\tilde\g.
\end{equation}
By Lemma~\ref{sum}, $s\eta+e+y\in\tilde\g^*_{\sf reg}$ for some $s\in\bbk^\star$, 
a contradiction. 

($\diamond$) Case 2.  Suppose now that $\eta$ is regular in $\g_0$.  
Then $\gt f_0\subset\te_0$.  
The quotient of $\tilde{\gt f}$ by $(\te_0^{\rm ab}\cap\tilde{\gt f})\subset\g_0^{\rm ab}$ is the contraction 
$\gt f_{(0),\vth^{-1}}$ associated with the restriction of $\vth^{-1}$ to $\gt f$. 
For $y\in\tilde{\gt f}^*$ such that $y(\te_0^{\rm ab})=0$, we have 
$\tilde{\gt f}^y=\gt f_{(0,\vth^{-1})}^y\oplus(\te_0^{\rm ab}\cap\tilde{\gt f})$. 
By Theorem~\ref{ind-au}, $\ind \gt f_{(0),\vth^{-1}}=\rk\gt f$. Therefore there is $y\in\tilde\g^*$ such that 
$y(\te_0^{\rm ab})=0$ and $\dim\tilde{\gt f}^{\bar y}=\rk\gt f+\rk\gt f_0$ for $\bar y=y|_{\tilde{\gt f}}$. 
Repeating the argument of~\eqref{tilde} and using again   Lemma~\ref{sum}, we conclude  
that $s\eta+y\in\tilde\g_{\sf reg}$ for some $s\in\bbk^\star$. This final contradiction shows that there is no 
divisor $D$. 
\end{proof}

\subsection{Symmetric invariants} \label{inf-sym}

Set $r=\rk\g_0$ and choose a set of homogeneous generators $F_1,\ldots,F_r\in\gS(\g_0)^{\g_0}$.
As above, let $\g_0^{\rm ab}$ stand for the abelian ideal of $\tilde\g$, which is isomorphic to $\g_0$ as a $\g_0$-module. 
Let further $\{H_j\mid 1\le j\le l\}\subset\gS(\g)^{\g}$ be a generating set consisting of homogeneous polynomials that are
 $\vth$-eigenvectors. 
 Assume that $\vth(H_j)=H_j$ if and only if $j\le r$, cf. Lemma~\ref{lm:outer-inv}. 
 Unless stated otherwise,  
  we use upper bullets for the highest $\tilde\vp$-components of $H\in\gS(\g_0\oplus\g)$. 

\begin{thm} \label{ggs-inf}
There is  a {\sf g.g.s.}  $F_i, \tilde H_i, H_j$ with $1\le i\le r<j\le l$ for the contraction 
$\gt g_0\oplus\gt g\ard\tilde\g$ defined by $\tilde\varphi$. Furthermore,  for $i\le r$,
$\tilde H_i\in H_i+ \gS(\g_0)^{\g_0}$, where $\g_0$ is a direct summand of $\g_0\oplus\g$,   and 
the ring  
$\gS(\tilde\g)^{\tilde\g}$ is freely generated by $\{F_i^\bullet, \tilde H_i^\bullet, H_j^\bullet\mid 1\le i\le r<j\le l\}$.
\end{thm}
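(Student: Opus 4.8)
The plan is to combine three facts already available: the index computation \eqref{eq4}, the \emph{codim}--$2$ property from Theorem~\ref{inf-c2}, and the \textsf{g.g.s.} machinery of Theorem~\ref{thm:kot14} applied to the contraction $\g_0\oplus\g\ard\tilde\g$ given by $\tilde\vp$. First I would record that $\gS(\g_0\oplus\g)^{\g_0\oplus\g}=\gS(\g_0)^{\g_0}\otimes\gS(\g)^{\g}$ is a polynomial ring freely generated by $F_1,\dots,F_r$ (sitting on the $\g_0$-factor) and $H_1,\dots,H_l$, and that $\ind(\g_0\oplus\g)=\rk\g_0+\rk\g=r+l'$, where I abusively mean that the total number of generators is $r+l$. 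Since $\ind\tilde\g=\rk\g_0+\rk\g$ by \eqref{eq4} and $\tilde\g$ has the \emph{codim}--$2$ property, Theorem~\ref{thm:kot14}{\sf (iii)} tells us that \emph{once} we exhibit a \textsf{g.g.s.} for $\tilde\vp$, its highest-component images freely generate $\gS(\tilde\g)^{\tilde\g}$ and cut out $\tilde\g^*_{\sf sing}$. So the whole theorem reduces to producing a \textsf{g.g.s.}

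The degree/weight bookkeeping is the heart of the matter. By Theorem~\ref{thm:kot14}{\sf (i)},{\sf (ii)}, a set of homogeneous generators is a \textsf{g.g.s.} if and only if $\sum \deg_{\tilde\vp}$ of the generators equals $D_{\tilde\vp}$, where $\tilde\vp(\omega)=s^{D_{\tilde\vp}}\omega$ for a volume form $\omega$ on $\g_0\oplus\g$. I would compute $D_{\tilde\vp}$ directly from the definition of $\tilde\vp_s$: it acts by $\id$ on $\g_0^{\rm d}$, by $s^j$ on $\g_{m-j}$ for $1\le j\le m-1$, and by $s^m$ on $\g_0^{\rm ab}$, so
\[
D_{\tilde\vp}=\sum_{j=1}^{m-1} j\dim\g_{m-j}+m\dim\g_0=\sum_{i=1}^{m-1}(m-i)\dim\g_i+m\dim\g_0.
\]
On the other side, for the original generators $F_i$ (which lie in $\gS(\g_0^{\rm d})$, hence have $\tilde\vp$-degree $0$?) — here care is needed: $\g_0$ as a $\g_0\oplus\g$-summand is $\g_0^{\rm ab}$, on which $\tilde\vp_s=s^m\id$, so $\deg_{\tilde\vp}F_i=m\,d_i^{F}$ where $d_i^F=\deg F_i$. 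For $H_j$ with $j>r$, i.e. $\vth(H_j)\ne H_j$, Lemma~\ref{lm-restr} gives $H_j|_{\g_0^*}=0$, and I expect $\deg_{\tilde\vp}H_j$ to coincide with $\deg_{\vp}H_j=d_j^\bullet$ computed for $\g$ alone (the extra $\g_0^{\rm d}$-directions carry $\tilde\vp$-weight $0$ and do not enter the top component). For the $H_i$ with $i\le r$ the naive top component would be a multiple of $F_i$'s analogue in $\gS(\g)$; replacing $H_i$ by $\tilde H_i\in H_i+\gS(\g_0)^{\g_0}$ — correcting by an element on the $\g_0^{\rm ab}$-summand of appropriate degree — is exactly the standard trick to lower the $\tilde\vp$-degree so that the sum hits $D_{\tilde\vp}$. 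I would verify that such a correction exists using the surjectivity of restriction $\gS(\g)^{\g}\to\gS(\g_0)^{\g_0}$ onto the relevant graded piece combined with Lemma~\ref{lm:outer-inv}{\sf (1)},{\sf (2)}, which controls $\sum d_j^\bullet$ and the parity $d_j^\bullet\in m\Z$.

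The main obstacle, then, is the precise degree count: showing $\sum_{i\le r}\deg_{\tilde\vp}(F_i^\bullet) + \sum_{i\le r}\deg_{\tilde\vp}(\tilde H_i^\bullet) + \sum_{j>r}\deg_{\tilde\vp}(H_j^\bullet) = D_{\tilde\vp}$. This is where one must use that $\{H_j\}$ is chosen well on $\g$ (ideally a \textsf{g.g.s.} for the $\vp$-contraction $\g\ard\g_{(\infty)}$, which by Theorem~\ref{free-main}-type arguments exists — note $\g_{(\infty)}$ is the relevant ``half'' here) together with the identity $\sum_j d_j=\bb(\g)$ and Lemma~\ref{lm:outer-inv}{\sf (2)}, $\sum r_j=\tfrac12 m(\rk\g-\rk\g_0)$, to convert sums of bi-homogeneous degrees into the explicit quantity $\sum_i (m-i)\dim\g_i$. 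Once the count balances, Theorem~\ref{thm:kot14}{\sf (iii)} closes the argument, giving both the polynomiality of $\gS(\tilde\g)^{\tilde\g}$ with the stated free generators and the description of $\tilde\g^*_{\sf sing}$ as their common critical set. I would also remark that algebraic independence of the $F_i^\bullet,\tilde H_i^\bullet,H_j^\bullet$ can be seen independently via the Jacobian locus computation of Theorem~\ref{ppy-max}, using that $D_0=\varnothing$ from Theorem~\ref{inf-c2}, which provides a useful consistency check on the degree bookkeeping.
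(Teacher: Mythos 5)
Your overall architecture coincides with the paper's: compute $D_{\tilde\vp}$, bound the $\tilde\vp$-degrees of the proposed generators so that their sum hits $D_{\tilde\vp}$, invoke Theorem~\ref{thm:kot14}{\sf (i)},{\sf (ii)} to get the {\sf g.g.s.} property, and then combine \eqref{eq4}, Theorem~\ref{inf-c2}, and Theorem~\ref{thm:kot14}{\sf (iii)} for free generation. Your value of $D_{\tilde\vp}$ and the identity $\deg_{\tilde\vp}F_i^\bullet=m\deg F_i$ are correct. But the step you yourself flag as ``the main obstacle'' --- the precise degree count --- is the actual content of the proof, and your guesses for it are off. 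First, for $j>r$ the top $\tilde\vp$-component of $H_j$ is its \emph{lowest} $\vp$-component reweighted: $\deg_{\tilde\vp}H_{j,i}=m\deg H_j-i$, so $\deg_{\tilde\vp}H_j^\bullet=m\deg H_j-i_{\min}$, which is not $d_j^\bullet=\deg_{\vp}H_j^\bullet$ as you propose. The bound actually needed is $\deg_{\tilde\vp}H_j^\bullet\le m\deg H_j-r_j$, which follows because every bi-homogeneous component $H_{j,i}$ is a $\vth$-eigenvector with eigenvalue $\zeta^{i}$, so $i\equiv r_j\pmod m$ and hence $i_{\min}\ge r_j$. Second, for $i\le r$ one needs $\deg_{\tilde\vp}\tilde H_i^\bullet\le m\deg H_i-m$, i.e.\ a drop of at least $m$ and not merely of $1$: after subtracting the element of $\gS(\g_0)^{\g_0}$ that cancels the top component (it exists by Lemma~\ref{lm-restr}, since $H_i|_{\g_0^*}\ne 0$ is a $\g_0$-invariant and hence a polynomial in the $F$'s), the new top component is still fixed by $\vth$, so its $\tilde\vp$-degree must be divisible by $m$. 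With these two bounds and Lemma~\ref{lm:outer-inv}{\sf (2)} the total is $\le D_{\tilde\vp}$, and Theorem~\ref{thm:kot14}{\sf (i)} forces equality; without them the count does not close.

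Two smaller points. You do not need the $H_j$ to form a {\sf g.g.s.} for any contraction of $\g$ itself; any homogeneous generating set of $\vth$-eigenvectors suffices, and the squeeze between the upper bounds and Theorem~\ref{thm:kot14}{\sf (i)} replaces any such hypothesis. Also, your closing ``consistency check'' is circular: the {\sl codim}--$2$ property of $\tilde\g^*_{\sf sing}$ only translates into a codimension bound on the Jacobian locus of the proposed generators \emph{after} the {\sf g.g.s.} property has been established, via Theorem~\ref{thm:kot14}{\sf (iii)}.
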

\begin{proof} 
 First  we compute the number $D_{\tilde\vp}$ using the properties of the grading on $\tilde\g$:
\[
D_{\tilde\vp}=m\dim\g_0+\sum_{j=1}^{m-1}(m{-}j)\dim\g_j=m\dim\g_0+\frac{m}{2}(\dim\g-\dim\g_0)=\frac{m}{2}
(\dim\g+\dim\g_0). 
\]
Since $\g_0^{\rm ab}$ is embedded anti-diagonally in $\g_0\oplus\gt g$, we have
$\deg_{\tilde\varphi} F_i^\bullet = m\deg F_i$. 
 
Suppose that $\vth(H_j)=H_j$. Then 
the restriction of $H_j$ from $\g^*$ to $\g_0^*\subset\g^*$ is non-zero by Lemma~\ref{lm-restr}. Thus 
$H_j^\bullet\in\gS(\g_0^{\rm ab})^{\g_0}$. Hence there is a homogeneous polynomial 
 $\tilde H_j$ in $H_j+\gS(\g_0)^{\g_0}$, where $\g_0$ is a direct summand of $\g_0\oplus\g$, such that 
$\tilde d_j^\bullet=\deg_{\tilde\varphi} \tilde H_j^\bullet < m\deg H_j$.  
We have $\vth( \tilde H_j^\bullet )=\zeta^{-\tilde d_j^\bullet}\tilde H_j^\bullet$ and at the same time 
 $\vth( \tilde H_j^\bullet )=\tilde H_j^\bullet$. Thereby
$\tilde d_j^\bullet\le m\deg H_j - m$.  

Suppose now that $\vth(H_j)\ne H_j$, i.e., $\vth(H_j)=\zeta^{r_j} H_j$ with $0 < r_j<m$. 
Here $H_j^\bullet\not\in\gS(\g_0^{\rm ab})$ and $\deg_{\tilde\varphi} \tilde H_j^\bullet \le m\deg H_j - r_j$. 
According to Lemma~\ref{lm:outer-inv}, $\sum_{j=1}^l r_j = \frac{1}{2}m(\rk\g-\rk\g_0)$. Thus 
\begin{multline}
\sum_{i=1}^r \deg_{\tilde\varphi} F_i^\bullet+\sum_{j=1}^r \deg_{\tilde\varphi} \tilde H_j^\bullet+
\sum_{j=r+1}^l \deg_{\tilde\varphi}  H_j^\bullet \le \\ 
\le m\bb(\g_0)+m\bb(\g)-m\rk\g_0- \frac{1}{2}m(\rk\g-\rk\g_0)=\frac{m}{2}(\dim\g+\dim\g_0)=D_{\tilde\vp}. \label{in}
\end{multline}
By Theorem~\ref{thm:kot14}{\sf (i)}, we have the equality in \eqref{in} and 
the polynomials  $F_i, \tilde H_i, H_j$  form a 
 good generating system for $\tilde\vp$. We obtain also 
\begin{equation} \label{degj}
\deg_{\tilde\varphi} H_j^\bullet = m\deg H_j - r_j \ \ \text{ if } \ \ 0<r_j<m.
\end{equation}
  By Theorem~\ref{inf-c2}, the Lie algebra
$\tilde\g$ has the  {\sl codim}--$2$ property. Then Theorem~\ref{thm:kot14}{\sf (iii)} states that 
$\gS(\tilde\g)^{\tilde\g}$ is freely generated by 
$F_i^\bullet, \tilde H_i^\bullet, H_j^\bullet$ with $ 1\le i\le r<j\le l$.
\end{proof} 
 
 Recall that $H_j=\sum_{i\ge 0} H_{j,i}$, where $\varphi_s(H_j)=\sum_{i\ge 0} s^i H_{j,i}$. 
Let  $H_{j,\bullet}$  be the the non-zero bi-homogeneous component of $H_j$ with the minimal $\vp$-degree.
 We regard 
each $H_{j,i}$ as an element of $\gS(\g_{(\infty)})\subset\gS(\tilde\g)$  identifying  $\g$ with the subspace 
\[
\g_{(\infty)}=\g_{m-1}\oplus\ldots\oplus\g_1\oplus\g_0^{\rm ab}\subset\tilde\g.
\] 
Note that $\deg_{\tilde\varphi} H_{j,i}=m\deg H_j - i$. 

\begin{thm} \label{gen-inf}
The algebra  
 $\cz_\infty^{\g_0}$ is freely generated by $F_i^\bullet\in\gS(\g_0^{\rm ab})$ with $1\le i\le r$
and  the set $\{H_{j,\bullet}\mid   r< j\le l\}$, where $H_{j,\bullet}=H_j^\bullet$ for each $j$. 
\end{thm}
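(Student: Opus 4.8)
The plan is to deduce everything from the description of $\gS(\tilde\g)^{\tilde\g}$ obtained in Theorem~\ref{ggs-inf}. I realise $\g_{(\infty)}$ as the nilradical ideal $\g_{m-1}\oplus\cdots\oplus\g_1\oplus\g_0^{\rm ab}$ of $\tilde\g=\g_0\ltimes\g_{(\infty)}$, so that $\gS(\g_{(\infty)})$ is an $\ad\tilde\g$-stable subalgebra of $\gS(\tilde\g)$. The first, routine, step is the identity
\[
\cz_\infty^{\g_0}\;=\;\gS(\g_{(\infty)})\cap\gS(\tilde\g)^{\tilde\g}.
\]
Indeed, for $P\in\gS(\g_{(\infty)})$, being $\tilde\g$-invariant amounts to $\g_{(\infty)}$-invariance, i.e. membership in $\cz_\infty$, together with $\g_0$-invariance.

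Second, I would locate the proposed generators inside this intersection and check their algebraic independence. The element $F_i^\bullet\in\gS(\g_0^{\rm ab})$ is $\g_0$-invariant, because $\g_0^{\rm ab}\cong\g_0$ as a $\g_0$-module and $F_i\in\gS(\g_0)^{\g_0}$, and it lies in $\cz_\infty$ because $\g_0^{\rm ab}$ is central in $\g_{(\infty)}$. For $r<j\le l$, note that under $\psi_s=s^m\vp_s^{-1}$ the highest power of $s$ on $\gS^d(\g)$ is attained on the bi-homogeneous components of smallest $\vp$-degree, so $H_{j,\bullet}$ is the leading component of $H_j$ for the contraction $\g\ard\g_{(\infty)}$ given by $\psi$; hence $H_{j,\bullet}\in\cz(\g_{(\infty)})=\cz_\infty$ by Proposition~\ref{prop:bullet}, and it is $\g_0$-invariant since $G_0$ preserves the $\vp$-grading. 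Writing $H_j$ in the $\g$-summand of $\g_0\oplus\g$ and inspecting $\tilde\vp_s(H_j)$, the top $\tilde\vp$-component of $H_j$ is obtained from $H_{j,\bullet}$ by transferring every $\g_0$-factor into $\g_0^{\rm ab}$; under the identification $\g\cong\g_{(\infty)}$ this is exactly $H_{j,\bullet}$, which is the asserted equality $H_{j,\bullet}=H_j^\bullet$. Thus $F_1^\bullet,\dots,F_r^\bullet,H_{r+1,\bullet},\dots,H_{l,\bullet}$ are $l$ members of the free generating system of $\gS(\tilde\g)^{\tilde\g}$ from Theorem~\ref{ggs-inf}, so they are algebraically independent and contained in $\cz_\infty^{\g_0}$. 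Denote by $\ca$ the polynomial algebra they generate.

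Third, since $\trdeg\cz_\infty^{\g_0}\le\ind\g=l$ by Theorem~\ref{inf-inv}{\sf(iii)}, the algebra $\cz_\infty^{\g_0}$ is algebraic over $\ca$. To promote this to an equality I would use Theorem~\ref{ppy-max}: it is enough to show that $\ca$ is algebraically closed in $\gS(\g_{(\infty)})=\bbk[\g_{(\infty)}^*]$, i.e. that $\gJ(F_1^\bullet,\dots,F_r^\bullet,H_{r+1,\bullet},\dots,H_{l,\bullet})$ has codimension at least $2$ in $\g_{(\infty)}^*$.

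This last step is the one I expect to require the most care. Regard these $l$ polynomials as functions on all of $\tilde\g^*$; none of them involves $\g_0^{\rm d}$, so at $\xi=(\xi',\xi'')\in(\g_0^{\rm d})^*\oplus\g_{(\infty)}^*$ each differential $d_\xi F_i^\bullet$, $d_\xi H_{j,\bullet}$ lies in $\g_{(\infty)}$ and depends only on $\xi''$; consequently their Jacobian locus in $\tilde\g^*$ is the cylinder $(\g_0^{\rm d})^*\times\gJ_{\g_{(\infty)}^*}(F_i^\bullet,H_{j,\bullet})$. Now $\tilde\g$ has the {\sl codim}--$2$ property (Theorem~\ref{inf-c2}), admits a {\sf g.g.s.}\ (Theorem~\ref{ggs-inf}), and $\ind\tilde\g=\rk\g_0+\rk\g$ by~\eqref{eq4}, so Theorem~\ref{thm:kot14}{\sf(iii)} applies to $\tilde\g$: at every $\xi\in\tilde\g^*_{\sf reg}$ the $\rk\g_0+\rk\g$ differentials of the free generators of $\gS(\tilde\g)^{\tilde\g}$ are linearly independent, hence so are the $l$ of them equal to $d_\xi F_i^\bullet$, $d_\xi H_{j,\bullet}$. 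Therefore the cylinder above is contained in $\tilde\g^*_{\sf sing}$, which by Theorem~\ref{inf-c2} contains no divisor; a divisor of $\g_{(\infty)}^*$ inside $\gJ_{\g_{(\infty)}^*}(F_i^\bullet,H_{j,\bullet})$ would, after multiplication by $(\g_0^{\rm d})^*$, produce a divisor of $\tilde\g^*$ lying in $\tilde\g^*_{\sf sing}$ — impossible. Hence the required codimension estimate holds, Theorem~\ref{ppy-max} gives $\cz_\infty^{\g_0}=\ca$, and $\ca$ is a polynomial ring freely generated by the $F_i^\bullet$ and $H_{j,\bullet}$, being generated by part of a free generating set of $\gS(\tilde\g)^{\tilde\g}$.
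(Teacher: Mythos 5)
Your proposal is correct and follows essentially the same route as the paper's proof: locate the candidate generators via Proposition~\ref{prop:bullet} and the identification $H_{j,\bullet}=H_j^\bullet$, get algebraic independence from Theorem~\ref{ggs-inf}, bound $\trdeg\cz_\infty^{\g_0}$ by Theorem~\ref{inf-inv}{\sf(iii)}, deduce the codimension--$2$ estimate for the Jacobian locus from Theorems~\ref{thm:kot14}{\sf(iii)} and~\ref{inf-c2}, and conclude with Theorem~\ref{ppy-max}. Your cylinder argument and the identity $\cz_\infty^{\g_0}=\gS(\g_{(\infty)})\cap\gS(\tilde\g)^{\tilde\g}$ are just explicit spellings-out of steps the paper leaves implicit.
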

\begin{proof}
For any $H_j\in\gS(\g)\subset\gS(\g_0\oplus \g)$, the highest $\tilde\vp$-component $H_j^\bullet\in\gS(\tilde\g)$ is obtained by taking first 
$H_{j,\bullet}\in\gS(\g)$ and then replacing in it each element of $\g_0$ by its copy in
$\g_0^{\rm ab}$. This is exactly how we understand $H_{j,\bullet}\in\gS(\tilde\g)$. 

By the construction and Proposition~\ref{prop:bullet}, 
\[\calH:=\{F_i^\bullet, H_j^\bullet\mid 1\le i\le r,\, r<j\le l\}\subset\cz_\infty^{\g_0}.
\] 
These homogeneous polynomials are  algebraically independent, see Theorem~\ref{ggs-inf}. In view of Theorem~\ref{inf-inv}{\sf (iii)}, we have 
$\trdeg\cz_\infty^{\g_0}=\rk\g=l$. 
By Theorems~\ref{thm:kot14}{\sf (iii)},~\ref{inf-c2},~\ref{ggs-inf}, 
$\tilde\g$ satisfies  the
Kostant regularity criterion and the differentials of the generators $F_i^\bullet, \tilde H_i^\bullet, H_j^\bullet$ with $1\le i\le r <j\le l$ are linearly independent on  a big open 
subset. Thus also
\beq \label{H}
\dim\gJ(\calH)\le \dim\g-2,
\eeq
where $\gJ(\calH)\subset\g^*_{(\infty)}$. 
%
By Theorem~\ref{ppy-max}, 
the subalgebra generated by $\calH$ is algebraically closed in $\gS(\g_{(\infty)})$ and hence 
it coincides with $\cz_\infty^{\g_0}$.
\end{proof}

If $\vartheta$ is inner, then $\cz_\infty=\gS(\g_0)$ \cite{fo} and $\cz_\infty^{\g_0}=\gS(\g_0)^{\g_0}$. One does not need any heavy machinery in that case. If $\vth$ is outer, the situation is different. We have seen this in Theorem~\ref{gen-inf} and 
are going to obtain a description of $\cz_\infty$ next.   

\begin{prop}   \label{zinf-au}
If $\vartheta$ is outer, then a basis $\{y_1,\ldots,y_R\}$ of $\g_0$ together with the lowest $\vp$-components $H_{j,\bullet}$, where 
$r<j\le l$,  
freely  generate $\cz_\infty$. 
\end{prop}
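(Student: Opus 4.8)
The plan is to identify $\cz_\infty$ with the subalgebra $\ca:=\mathsf{alg}\langle y_1,\dots,y_R,\,H_{j,\bullet}\mid r<j\le l\rangle$, where $R=\dim\g_0$, $l=\rk\g$ and $r=\rk\g_0$; in view of Theorem~\ref{gen-inf} this amounts to proving $\cz_\infty=\mathsf{alg}\langle\g_0,\cz_\infty^{\g_0}\rangle$. First I would verify $\ca\subseteq\cz_\infty$. The subspace $\g_0$ lies in the centre of $\g_{(\infty)}$ by Proposition~\ref{prop:graded-limits}, so $\gS(\g_0)=\mK[y_1,\dots,y_R]\subseteq\cz_\infty$. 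For $r<j\le l$, the component $H_{j,\bullet}$ of $H_j\in\cz(\g)$ of smallest $\vp$-degree is precisely the component of largest $\psi$-degree, since $\psi_s(H_{j,i})=s^{\,m\deg H_j-i}H_{j,i}$; hence $H_{j,\bullet}\in\cz(\g_{(\infty)})=\cz_\infty$ by Proposition~\ref{prop:bullet} applied to $\psi$. Since by Theorem~\ref{gen-inf} the algebra $\cz_\infty^{\g_0}$ is freely generated by $F_1^\bullet,\dots,F_r^\bullet,H_{r+1,\bullet},\dots,H_{l,\bullet}$ with $F_i^\bullet\in\gS(\g_0^{\rm ab})=\mK[y_1,\dots,y_R]$, this also gives $\ca=\mathsf{alg}\langle\g_0,\cz_\infty^{\g_0}\rangle$.

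Next I would prove that the $R+l-r$ generators of $\ca$ are algebraically independent. By Theorem~\ref{thm:ind-inf}, which applies since $\g_0$ contains a regular semisimple element of $\g$ (see \cite[\S\,8.8]{kac}), $\ind\g_{(\infty)}=R+l-r$, whence $\trdeg\cz_\infty\le R+l-r$; thus it suffices to find one $\xi\in\g^*_{(\infty)}$ where the differentials $\textsl{d}_\xi y_i$ and $\textsl{d}_\xi H_{j,\bullet}$ are linearly independent. I would take $\xi$ in the non-empty open set of points of $\g^*_{\sf reg}\cap\g^*_{\infty,\sf reg}$ with $\bar\xi:=\xi|_{\g_0}\in(\g_0^*)_{\sf reg}$ and $\xi\notin\gJ(\calH)$ for $\calH=\{F_i^\bullet,H_{j,\bullet}\}$, and reuse the subspace $\gt v=\lim_{t\to\infty}\g_{(t)}^\xi$ studied in the proof of Theorem~\ref{inf-inv}: it has dimension $l$, meets $\g_0$ in $\g_0^{\bar\xi}$, and — since $F_i^\bullet,H_{j,\bullet}\in\cz_\infty^{\g_0}$ — contains every $\textsl{d}_\xi F_i^\bullet$ and $\textsl{d}_\xi H_{j,\bullet}$. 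As $\xi\notin\gJ(\calH)$, these $l$ vectors form a basis of $\gt v$; the $\textsl{d}_\xi F_i^\bullet$ lie in $\g_0$ (these polynomials involve only the $y_i$) and span $\g_0^{\bar\xi}=\gt v\cap\g_0$ by Kostant's criterion for $\g_0$. Hence the $\textsl{d}_\xi H_{j,\bullet}$ span a complement of $\g_0^{\bar\xi}$ in $\gt v$, so $\langle\textsl{d}_\xi H_{j,\bullet}\mid r<j\le l\rangle\cap\g_0=\langle\textsl{d}_\xi H_{j,\bullet}\rangle\cap\gt v\cap\g_0=0$; combined with $\langle\textsl{d}_\xi y_i\rangle=\g_0$ this gives the desired independence, so $\ca$ is a polynomial ring with $R+l-r=\trdeg\cz_\infty$ generators.

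It then remains to upgrade the inclusion $\ca\subseteq\cz_\infty$ to an equality. Since $\trdeg\ca=\trdeg\cz_\infty$, every element of $\cz_\infty$ is algebraic over the field of fractions of $\ca$, so by Theorem~\ref{ppy-max} it is enough to show that $\gJ(y_1,\dots,y_R,H_{r+1,\bullet},\dots,H_{l,\bullet})$ has codimension $\ge 2$ in $\g^*_{(\infty)}$. The computation of the previous paragraph confines this Jacobian locus to the union of $\g^*_{\sf sing}$, the preimage of $(\g_0^*)_{\sf sing}$ under $\xi\mapsto\bar\xi$, the singular set of $\g_{(\infty)}$, and $\gJ(\calH)$; the first two have codimension $\ge 3$ (Kostant's theorem for $\g$ and for $\g_0$), and $\gJ(\calH)$ has codimension $\ge 2$ by Theorem~\ref{gen-inf} (see~\eqref{H}). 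The main obstacle — and the only step requiring genuinely new work — is to show that for an outer $\vth$ the nilpotent Lie algebra $\g_{(\infty)}$ has the {\sl codim}--$2$ property (this fails for inner $\vth$, e.g. when $\g_{(\infty)}$ is a Heisenberg algebra, but then $r=l$ and the statement is vacuous). I plan to establish it by imitating the proof of Theorem~\ref{inf-c2}: a putative divisor in the singular set of $\g_{(\infty)}$ must be $G_0$-stable, conical, and of the form $D_0\times\Ann(\g_0^{\rm ab})$ with $D_0\subset(\g_0^{\rm ab})^*$ a $G_0$-stable divisor not meeting the regular locus of $\g$, whereupon the regularity assertions of Lemmas~\ref{sl3} and~\ref{sl3-n} — applied inside $\g^*_{(\infty)}$ rather than $\tilde\g^*$ — rule out the surviving $\gt{sl}_3$-type configurations and yield a contradiction.
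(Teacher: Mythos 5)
Your first two paragraphs are sound: the inclusion $\mathsf{alg}\langle y_i,H_{j,\bullet}\rangle\subseteq\cz_\infty$ via Proposition~\ref{prop:bullet} applied to $\psi$, and the algebraic independence of the $R+l-r$ generators at a single generic $\xi$ using the isotropic space $\gt v$ from Theorem~\ref{inf-inv}, are both correct (the paper gets independence as a by-product of its codimension estimate rather than from $\gt v$, but that is a harmless variation). The problem is the last step. To apply Theorem~\ref{ppy-max} you must bound $\gJ(y_1,\dots,y_R,H_{r+1,\bullet},\dots,H_{l,\bullet})$ in codimension $\ge 2$, and because your independence argument only works at points of $\g^*_{\sf reg}\cap\g^*_{\infty,\sf reg}$, you are forced to include $(\g^*_{(\infty)})_{\sf sing}$ among the bad loci. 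You then correctly identify that you would need the {\sl codim}--$2$ property of $\g_{(\infty)}$ for outer $\vth$ --- but this is a substantial unproven claim, nowhere established in the paper, and your plan to obtain it by ``imitating Theorem~\ref{inf-c2}'' is only a sketch: that theorem concerns $\tilde\g=\g_0\ltimes\g_{(\infty)}$, whose singular set behaves quite differently (its proof leans on $\ind\tilde\g=\rk\g+\rk\g_0$ and on limits $s^m\tilde\vp_s(\xi)\to\bar\xi$ landing in $\tilde\g^*_{\sf reg}$, none of which transfers to $\g_{(\infty)}$, where $\ind\g_{(\infty)}=\dim\g_0+\rk\g-\rk\g_0$ and a point of $(\g_0^{\rm ab})^*$ regular in $\g^*$ need not be regular for $\g_{(\infty)}$ unless it is also regular for $\g_0$). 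So as written the proof does not close.

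The paper avoids this obstacle entirely, and you should too. For $\xi$ with $\bar\xi=\xi|_{\g_0}\in(\g_0^*)_{\sf reg}$, consider the restriction map $\varpi_\xi\colon T^*_\xi\g^*\to T^*_\xi(G_0{\cdot}\xi)$. Each $H_{j,\bullet}$ is a $G_0$-invariant, so $\varpi_\xi(\textsl{d}_\xi H_{j,\bullet})=0$, while $\ker(\varpi_\xi|_{\g_0})=\g_0^{\bar\xi}$, so $\varpi_\xi$ maps $V_\xi=\langle\textsl{d}_\xi y_i,\textsl{d}_\xi H_{j,\bullet}\rangle$ onto a space of dimension $\dim\g_0-\rk\g_0$; and if moreover $\xi\notin\gJ(\calH)$, then $\g_0^{\bar\xi}+\langle\textsl{d}_\xi H_{j,\bullet}\rangle$ has dimension $l$ (Kostant's criterion for $\g_0$ gives $\g_0^{\bar\xi}=\langle\textsl{d}_\xi F_i^\bullet\rangle$), whence $\dim V_\xi=R+(l-r)$. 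The only excluded loci are the preimage of $(\g_0^*)_{\sf sing}$ (codimension $\ge 3$ by Kostant for $\g_0$) and $\gJ(\calH)$ (codimension $\ge 2$ by~\eqref{H}); no regularity of $\xi$ in $\g^*$ or in $\g^*_{(\infty)}$ is used anywhere. This gives the codimension-$2$ bound you need and Theorem~\ref{ppy-max} then finishes the argument exactly as you intended.
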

\begin{proof}  
We check that the differentials $\textsl{d}y_i$ and $\textsl{d}H_{j,\bullet}$ of the proposed generators are linearly independent on a big open subset of 
$\g^*$. For $\xi\in\g^*$, set $\bar\xi=\xi|_{\g_0}$. Note that $\bar\xi\in(\g_0^*)_{\sf reg}$ for all $\xi$ in a big open subset of $\g^*$. 
Let $\varpi_\xi\!: T^*_\xi \g^* \to T^*_\xi(G_0{\cdot}\xi)$ be the restriction map. 
Since $H_{j,\bullet}$ is a $G_0$-invariant, we have $\varpi_\xi(\textsl{d}_\xi H_{j,\bullet})=0$ 
for each $j$.  Note that $\ker({\varpi_\xi}|_{\g_0})= \g_0^{\bar\xi}$.

Suppose that $\bar\xi\in(\g_0^*)_{\sf reg}$. 
Then  $\dim\varpi_\xi(\g_0)=\dim\g_0-\rk\g_0$. 
By  the
Kostant regularity criterion~\cite[Theorem~9]{ko63}  applied to $\g_0$, we have 
\[
\left<\textsl{d}_{\xi}F_i^\bullet \mid 1\le i\le r\right>_{\bbk}=\g_0^{\bar\xi}.
\]
Suppose now that $\xi\not\in\gJ(\calH)$. This additional  restriction still leaves us a big open subset of suitable elements, see 
\eqref{H}. Then 
\[
 \dim(\g_0^{\bar\xi} + \left<\textsl{d}_{\xi} H_{j,\bullet}\mid  r<j\le l\right>_{\bbk})=l.
\] 
Thus, on a big open subset, the dimension of 
$V_\xi=\left<\textsl{d}_\xi y_i,\textsl{d}_{\xi} H_{j,\bullet}\mid 1\le i\le R,\,r<j\le l\right>_{\bbk}$ is equal to the sum of 
$\dim\varpi_\xi(V_\xi)=\dim\g_0-\rk\g_0$ 
and $l=\dim\ker({\varpi_\xi}|_{V_\xi})$, i.e.,  
$\dim V_\xi=R+(l-r)$. This number is equal to the number of generators. 
By Theorem~\ref{ppy-max}, 
the subalgebra 
$$
\mathsf{alg}\langle y_i, H_{j,\bullet}\mid 1\le i\le R,\,r<j\le l\rangle
$$ 
is algebraically closed in $\gS(\g_{(\infty)})$. 
Furthermore, 
\[
\mathsf{alg}\langle y_i, H_{j,\bullet}\mid 1\le i\le R,\,r<j\le l\rangle \subset\cz_{\infty} \ \ \text{and} \ \ 
\trdeg\cz_{\infty}\le \ind\g_{(\infty)}=R+l-r,
\]
see Theorem~\ref{thm:ind-inf} for the last equality. Thereby $\mathsf{alg}\langle y_i, H_{j,\bullet}\mid 1\le i\le R,\,r<j\le l\rangle =\cz_{\infty}$.
 \end{proof}

In \cite[Sect.\,5]{fo}, we have considered a larger Poisson-commutative 
 subalgebra 
$\tilde\gZ:=\mathsf{alg}\langle\gZ,\gS(\g_0)^{\g_0}\rangle$. 

\begin{cl}      \label{4.8}
Suppose that $\ind\g_{(0)}=\rk\g$ and that $\gS(\g_{(0)})^{\g_{(0)}}=\mK[H_1^\bullet,\dots,H_l^\bullet]$,
where each $H_j$ is  homogeneous, $\vth(H_j)\in\mK H_j$, and   the polynomials $H_j^\bullet$ are highest $\vp$-components. 

\textbullet\quad  If $\infty\in\BP_{\sf reg}$, then 
$\mathsf{alg}\langle \gZ, \cz_\infty^{\g_0}\rangle=\mathsf{alg}\langle \gZ, \cz_\infty\rangle=\tilde\gZ=\gZ=\mathsf{alg}\!\left<\gZ_{\times},\g_0\right>$; 

\textbullet\quad if $\infty\not\in\BP_{\sf reg}$, then $\mathsf{alg}\langle \gZ, \cz_\infty^{\g_0}\rangle=\tilde\gZ=\mathsf{alg}\langle\gZ_{\times},\gS(\g_0)^{\g_0}\rangle$.  \\
In both cases,  $\tilde\gZ$ is a free (polynomial) algebra. 
\end{cl}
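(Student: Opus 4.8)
The plan is to combine Theorem~\ref{ggs-inf} and its reductive-case specialisation with the structural results about $\gZ$, $\gZ_\times$, and $\cz_\infty$ obtained in \cite{fo} and in Sections~\ref{sect:3}--\ref{sect:infty-g}. First I would unpack the hypothesis: the assumption that $\gS(\g_{(0)})^{\g_{(0)}}=\mK[H_1^\bullet,\dots,H_l^\bullet]$ with each $H_j$ a $\vth$-eigenvector means precisely that $H_1,\dots,H_l$ is a {\sf g.g.s.} for $\vp$ consisting of $\vth$-eigenvectors (by Theorem~\ref{thm:kot14}{\sf (ii)}, since the $H_j^\bullet$ generate $\cz_0$ and hence are algebraically independent), and moreover that $\g_{(0)}$ has the {\sl codim}--$2$ property (because, again by Theorem~\ref{thm:kot14}{\sf (iii)}, the polynomiality of $\cz_0$ on this {\sf g.g.s.} forces the Kostant criterion, i.e.\ $\codim(\g_{(0)})^*_{\sf sing}\ge 2$). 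So Theorem~\ref{free-main}, Corollary~\ref{cor1:main4}, and Corollary~\ref{cor2:main4} all apply, with the abelian/non-abelian dichotomy for $\g_0$ governing which corollary is in force.

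Next I would treat the two bullets. \textbf{Case $\infty\in\BP_{\sf reg}$.} By Corollary~\ref{cor:infty=reg}, $\infty\in\BP_{\sf reg}$ is equivalent to $\g_0$ being abelian; since $\vth$ is then forced to be inner (an outer $\vth$ with $\g_0=\te_0\subsetneq\te$ still has $[\g_0,\g_0]=0$, so one must be careful — but in the abelian case one may invoke directly that $\cz_\infty=\gS(\g_0)$ by Theorem~\ref{thm:ind-inf}, as $\ind\g_0=\dim\g_0$ and $\cz_\infty\subset\gZ$). Hence $\cz_\infty=\gS(\g_0)$, so $\cz_\infty^{\g_0}=\gS(\g_0)^{\g_0}$ and $\mathsf{alg}\lg\gZ,\cz_\infty\rg=\mathsf{alg}\lg\gZ,\gS(\g_0)\rg$. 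Now $\gS(\g_0)\subset\gS(\g)^{\g_0}$ and, since $\g_0\subset\cz_\infty\subset\gZ$ when $\infty\in\BP_{\sf reg}$, we already have $\gS(\g_0)\subset\gZ$; combined with $\gZ=\mathsf{alg}\lg\gZ_\times,\g_0\rg$ (Corollary~\ref{cor2:main4}, whose hypotheses are exactly those assumed here) this collapses everything: $\mathsf{alg}\lg\gZ,\cz_\infty^{\g_0}\rg=\mathsf{alg}\lg\gZ,\cz_\infty\rg=\tilde\gZ=\gZ=\mathsf{alg}\lg\gZ_\times,\g_0\rg$. Polynomiality is then part of Corollary~\ref{cor2:main4}. \textbf{Case $\infty\notin\BP_{\sf reg}$.} Here $\g_0$ is not abelian (Corollary~\ref{cor:infty=reg}), so Corollary~\ref{cor1:main4} gives $\gZ=\gZ_\times=$ the free algebra on all non-zero $H_{j,i}$, and $\tilde\gZ=\mathsf{alg}\lg\gZ_\times,\gS(\g_0)^{\g_0}\rg$ by definition. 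It remains to show $\mathsf{alg}\lg\gZ,\cz_\infty^{\g_0}\rg=\tilde\gZ$, i.e.\ that adjoining $\cz_\infty^{\g_0}$ to $\gZ_\times$ is the same as adjoining $\gS(\g_0)^{\g_0}$. By Theorem~\ref{gen-inf}, $\cz_\infty^{\g_0}$ is freely generated by the $F_i^\bullet\in\gS(\g_0^{\rm ab})$ and the $H_{j,\bullet}=H_j^\bullet$ with $r<j\le l$; the latter lie in $\gZ_\times$ (they are bi-homogeneous components of the $H_j$), while identifying $\g_0^{\rm ab}$ with $\g_0$ shows $F_i^\bullet$ generate $\gS(\g_0)^{\g_0}$. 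Hence $\mathsf{alg}\lg\gZ_\times,\cz_\infty^{\g_0}\rg=\mathsf{alg}\lg\gZ_\times,\gS(\g_0)^{\g_0}\rg=\tilde\gZ$.

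Finally, for freeness of $\tilde\gZ$ in both cases I would argue as follows. In the first case it is immediate from Corollary~\ref{cor2:main4}. In the second case, $\gZ_\times$ is freely generated by the bi-homogeneous components $H_{j,i}$ (Theorem~\ref{free-main}{\sf (ii)} / Corollary~\ref{cor1:main4}), and one must check that adjoining a set of generators of $\gS(\g_0)^{\g_0}$ keeps the algebra polynomial and the whole generating set algebraically independent. The transcendence-degree bookkeeping is what pins this down: $\trdeg\tilde\gZ\le\bb(\g)$ by \cite[Prop.\,1.1]{m-y} applied with $\h=\g_0$ (since $\tilde\gZ\subset\gS(\g)^{\g_0}$ and $\tilde\gZ\supsetneq\gZ$ forces the bound $\bb(\g)-\bb(\g_0)+\ind\g_0$ to be replaced by $\bb(\g)$ once $\gS(\g_0)^{\g_0}$ is absorbed — this is exactly the argument of \cite[Sect.\,5]{fo}); on the other hand $\trdeg\gZ_\times=\bb(\g,\vth)$ and the $\rk\g_0$ new generators from $\gS(\g_0)^{\g_0}$ bring the count up to $\bb(\g,\vth)+\rk\g_0=\bb(\g)$ by the identity $\bb(\g,\vth)=\bb(\g)-\bb(\g_0)+\rk\g_0$, so the full generating set is algebraically independent, and being built from weighted-homogeneous algebraically independent elements it generates a polynomial ring. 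The main obstacle I anticipate is bookkeeping rather than conceptual: verifying cleanly that the new generators $F_i^\bullet$ (equivalently, generators of $\gS(\g_0)^{\g_0}$) are algebraically independent from the $H_{j,i}$ already present in $\gZ_\times$, which is where one genuinely needs the {\sl codim}--$2$ property of $\tilde\g$ (Theorem~\ref{inf-c2}) and the {\sf g.g.s.} statement of Theorem~\ref{ggs-inf} to guarantee that $F_i^\bullet,\tilde H_i^\bullet,H_j^\bullet$ are jointly algebraically independent inside $\gS(\tilde\g)$, and then to transport this back to $\gS(\g)$ via the projections relating $\tilde\g$, $\g_{(\infty)}$, and $\g$.
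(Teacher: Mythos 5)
Your overall strategy --- reduce everything to $\mathsf{alg}\langle\gZ_{\times},\gS(\g_0)^{\g_0}\rangle$ via Theorem~\ref{gen-inf} --- is the paper's, but several steps are faulty. The most serious is the freeness bookkeeping at the end. You claim that adjoining generators of $\gS(\g_0)^{\g_0}$ to $\gZ_{\times}$ contributes $\rk\g_0$ genuinely new algebraically independent elements, ``bringing the count up to $\bb(\g,\vth)+\rk\g_0=\bb(\g)$''. This is wrong twice over: $\bb(\g,\vth)+\rk\g_0=\bb(\g)-\bb(\g_0)+2\rk\g_0$ equals $\bb(\g)$ only when $\dim\g_0=3\rk\g_0$; and the enlarged set cannot be algebraically independent, because exactly $r=\rk\g_0$ of the generators $H_{j,i}$ of $\gZ_{\times}$ (namely the $H_{j,0}$ with $j\le r$, by Lemmas~\ref{lm:outer-inv} and~\ref{lm-restr}) already lie in $\gS(\g_0)^{\g_0}=\mK[F_1,\dots,F_r]$. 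Indeed $\trdeg\tilde\gZ\le\bb(\g,\vth)$ by \cite[Prop.\,1.1]{m-y}, so your count would be a contradiction. The correct argument, and the one the paper uses, is to \emph{replace} those $r$ generators by $F_1,\dots,F_r$: the total stays at $\bb(\g,\vth)$, the new set is algebraically independent because the discarded $H_{j,0}$ are algebraic over the $F_i$, and the algebra stays polynomial.

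Two further points. First, you derive the {\sl codim}--$2$ property of $\g_{(0)}$ from the hypothesis by invoking a converse of Theorem~\ref{thm:kot14}{\sf (iii)} that is not stated, and you then lean on Corollaries~\ref{cor1:main4} and~\ref{cor2:main4}, which require that property (and, for the second, that $\vth$ be inner --- which you yourself observe need not hold when $\g_0$ is abelian). None of this is needed: the hypothesis $\gS(\g_{(0)})^{\g_{(0)}}=\mK[H_1^\bullet,\dots,H_l^\bullet]$ gives $\cz_0\subset\gZ_{\times}$ directly, and $\cz_t\subset\gZ_{\times}$ for $t\in\bbk^\star$ always; hence $\gZ=\gZ_{\times}$ when $\infty\notin\BP_{\sf reg}$, and when $\infty\in\BP_{\sf reg}$ one only has to add $\cz_\infty$, which is described by Proposition~\ref{zinf-au} for outer $\vth$ and equals $\gS(\g_0)$ for inner $\vth$. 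Second, your claim that $\cz_\infty=\gS(\g_0)$ whenever $\g_0$ is abelian is false for outer $\vth$: by Proposition~\ref{zinf-au}, $\cz_\infty$ then also contains the components $H_{j,\bullet}$ with $r<j\le l$, and Theorem~\ref{thm:ind-inf}, which you cite for this, computes an index, not a Poisson centre. That particular slip is harmless for the first bullet, since $\cz_\infty\subset\gZ$ as soon as $\infty\in\BP_{\sf reg}$, but the chain of equalities should not be made to rest on it.
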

\begin{proof}
Our assumptions on $\gS(\g_{(0)})^{\g_{(0)}}$ imply that $\cz_0\subset\gZ_\times$. 
Furthermore, $\{H_1,\ldots,H_l\}$ is a {\sf g.g.s.} for $\vth$. Then $\gZ_\times$ is a polynomial algebra 
by Theorem~\ref{free-main}. It has $\bb(\g,\vth)$ algebraically independent generators $H_{j,i}$. 
Exactly $r=\rk\gt g_0$ of these generators belong to $\gS(\g_0)$, see Lemmas~\ref{lm:outer-inv},\,\ref{lm-restr}.
If we replace them with algebraically independent generators of $\gS(\g_0)^{\g_0}$, the new algebra
is still polynomial. This finishes the proof in view of Theorem~\ref{gen-inf}. 
\end{proof}

The first statement of
Corollary~\ref{4.8} generalises Corollary~\ref{cor2:main4} to outer automorphisms.

\end{document}